\documentclass[10pt]{amsart}
\setlength{\topmargin}{0in}
\setlength{\headheight}{0in}
\setlength{\headsep}{0.25in}
\setlength{\textwidth}{6.5in}
\setlength{\oddsidemargin}{0in}
\setlength{\evensidemargin}{0in}
\setlength{\parskip}{.05in}
\usepackage{amsthm}
\usepackage{amstext}
\usepackage{enumitem}
\usepackage{amssymb}
\usepackage{amsmath,calligra,mathrsfs}
\usepackage[all,cmtip]{xy}
\usepackage{dsfont}
\usepackage{hyperref}
\hypersetup{
	colorlinks   = true,          
	urlcolor     = blue,          
	linkcolor    = blue,          
	citecolor   = blue             
}
\usepackage{graphicx}
\newcommand\cal{\mathcal}
\newcommand\bb{\mathbb}

\usepackage{comment}
\usepackage{xcolor}
\usepackage{tikz-cd}
\usepackage{mathtools}

\theoremstyle{plain}
\newtheorem{thm}{Theorem}[section]
\newtheorem{lem}[thm]{Lemma}
\newtheorem{prop}[thm]{Proposition}
\newtheorem{cor}[thm]{Corollary}

\theoremstyle{definition}
\newtheorem{defn}[thm]{Definition}
\newtheorem{exmp}[thm]{Example}

\theoremstyle{remark}
\newtheorem{rem}[thm]{Remark}
\newtheorem{rems}[thm]{Remarks}
\theoremstyle{plain}

\DeclareMathOperator{\id}{id}

\DeclareMathOperator{\Proj}{Proj}

\DeclareMathOperator{\Gal}{Gal}

\DeclareMathOperator{\sheafhom}{\mathscr{H}\text{\kern -3pt {\calligra\large om}}\,}

\DeclareMathOperator{\Pic}{Pic}

\DeclareMathOperator{\Br}{Br}
\DeclareMathOperator{\NS}{NS}
\DeclareMathOperator{\CH}{CH}

\DeclareMathOperator{\bCH}{\mathbf{CH}}
\DeclareMathOperator{\bPic}{\mathbf{Pic}}

\DeclareMathOperator{\Hilb}{Hilb}
\DeclareMathOperator{\Sym}{Sym}

\DeclareMathOperator{\fppf}{fppf}

\DeclareMathOperator{\bl}{bl}

\DeclareMathOperator{\bAlb}{\mathbf{Alb}}
\DeclareMathOperator{\alg}{alg}

\newcommand{\defi}[1]{\textsf{#1}} 

\newcommand{\kbar}{{\overline{k}}}

\def\Ker{\text{Ker}}

\let\phi\varphi
\setlength{\parindent}{15pt}
\pdfpagewidth 8.5in
\pdfpageheight 11in

\def\P{\mathbb{P}}
\def\O{\mathcal{O}}
\def\Z{\mathbb{Z}}
\def\C{\mathbb{C}}

\def\R{\mathbb{R}}
\def\F{\mathbb{F}}

\title[\tiny 
Arithmetic and birational properties of linear spaces on intersections of two quadrics
]{Arithmetic and birational properties of linear spaces on intersections of two quadrics}
\author{Lena Ji}
\address{Department of Mathematics, University of Illinois Urbana-Champaign, 273 Altgeld Hall, 1409 W. Green Street, Urbana, IL 61801}
\email{lenaji.math@gmail.com}
\urladdr{https://lji.web.illinois.edu/}
\author{Fumiaki Suzuki}
\address{Institute of Algebraic Geometry, Leibniz University Hannover, Welfengarten 1, 30167, Hannover, Germany}
\email{suzuki@math.uni-hannover.de}
\urladdr{https://fumiaki-suzuki.github.io/}

\subjclass[2020]{Primary: 14E08, Secondary: 14G20, 14C25, 14D10}

\thanks{
During the preparation of this manuscript,
L.J. was supported in part by NSF MSPRF grant DMS-2202444.
F.S. is supported by the ERC grant “RationAlgic” (grant no. 948066) and partially by the DFG project EXC-2047/1 (project no. 390685813).
}

\begin{document}

\begin{abstract}
We study rationality questions for Fano schemes of linear spaces on smooth complete intersections of two quadrics, especially over non-closed fields. Our approach is to study hyperbolic reductions of the pencil of quadrics associated to $X$. We prove that the Fano schemes $F_r(X)$ of $r$-planes are birational to symmetric powers of hyperbolic reductions, generalizing results of Reid and Colliot-Th\'el\`ene--Sansuc--Swinnerton-Dyer, and we give several applications to rationality properties of $F_r(X)$.

For instance, we show that if $X$ contains an $(r+1)$-plane over a field $k$, then $F_r(X)$ is rational over $k$. When $X$ has odd dimension, we show a partial converse for rationality of the Fano schemes of second maximal linear spaces, generalizing results of Hassett--Tschinkel and Benoist--Wittenberg. When $X$ has even dimension, the analogous result does not hold, and we further investigate this situation over the real numbers. In particular, we prove a rationality criterion for the Fano schemes of second maximal linear spaces on these even-dimensional complete intersections over $\mathbb R$; this may be viewed as extending work of Hassett--Koll\'ar--Tschinkel.
\end{abstract}

\maketitle

\section{Introduction}

Over an arbitrary field $k$ of characteristic $\neq 2$,
let $X$ be a smooth complete intersection of two quadrics in $\P^N$.
There is an extensive literature on 
the Fano schemes \(F_r(X)\) of
\(r\)-dimensional linear spaces on such complete intersections 
\cite{Gauthier54,Reid-thesis,Tyurin-survey, Donagi80, Wang-maximal-linear-spaces},
and there have been many applications to arithmetic problems \cite{CTSSD, BGW, IP-hyperelliptic, CT-intersection-quadrics}, moduli theory \cite{Desale-Ramanan, Ramanan81}, and rationality questions \cite{ABB14,HT-intersection-quadrics,BW-IJ}.
In this paper, we study birational properties of
the Fano schemes of linear spaces on $X$ via 
hyperbolic reductions of the pencil of quadrics $\mathcal{Q}\rightarrow \P^1$ associated to \(X\), with an eye toward applications to rationality questions over non-closed fields.

A variety over a field \(k\) is \defi{\(k\)-rational} if it is birationally equivalent to projective space over \(k\).
It is classically known that if a smooth complete intersection of two quadrics \(X \subset \bb P^N\) contains a line defined over $k$, then $X$ is $k$-rational by projection from this line (see, e.g., \cite[Proposition 2.2]{CTSSD}).
Our first result is a generalization of this to Fano schemes of higher-dimensional linear spaces on \(X\):
\begin{thm}\label{Fanoscheme}
Over a field $k$ of characteristic $\neq 2$, 
let $X$ be a smooth complete intersection of two quadrics in $\P^N$.
Let $0\leq r\leq \lfloor \frac{N}{2}\rfloor -2$.
If $F_{r+1}(X)(k)\neq \emptyset$, 
then $F_r(X)$ is $k$-rational.
\end{thm}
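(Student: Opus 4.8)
The plan is to fix an $(r+1)$-plane $\Lambda \cong \mathbb P^{r+1} \subset X$ defined over $k$ — which exists since $F_{r+1}(X)(k) \neq \emptyset$ — and to show that projection from $\Lambda$ induces a $k$-birational map
\[
\pi_\Lambda \colon F_r(X) \dashrightarrow \mathbb G(r, N-r-2),
\]
the Grassmannian of $r$-planes in $\mathbb P^{N-r-2}$. Since $\dim \mathbb G(r, N-r-2) = (r+1)(N-2r-2) = \dim F_r(X)$ and any Grassmannian over $k$ is $k$-rational (it has a dense open Schubert cell isomorphic to affine space), this suffices. For $r = 0$ this is exactly the classical statement recalled above: there $F_0(X) = X$, $\Lambda$ is a line, and projection from $\Lambda$ is already birational onto $\mathbb P^{N-2} = \mathbb G(0, N-2)$.

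First I would verify that $\pi_\Lambda$ is a well-defined dominant rational map. A dimension count, valid because $r \le \lfloor N/2 \rfloor - 2$, shows that a general $r$-plane $\Pi \subset X$ is disjoint from $\Lambda$; then $\pi_\Lambda$ embeds $\Pi$ as an $r$-plane $\overline \Pi \subset \mathbb P^{N-r-2}$, so $\pi_\Lambda$ is defined on the dense open $F_r(X)^\circ \subset F_r(X)$ of planes disjoint from $\Lambda$. Conversely, choosing coordinates so that $\Lambda = \{y_0 = \cdots = y_{N-r-2} = 0\}$, the two quadrics cutting out $X$ have the shape $Q_i = \mathbf x^{T} B_i \mathbf y + \mathbf y^{T} C_i \mathbf y$ (no pure $\mathbf x$-term, since $\Lambda \subset Q_i$), and an $r$-plane $\Pi \subset X$ disjoint from $\Lambda$ with $\pi_\Lambda(\Pi) = \overline \Pi$ is the graph of a linear map $\phi \colon \widehat{\overline \Pi} \to \widehat \Lambda$ with $\widehat{\overline \Pi} \cong k^{r+1}$, $\widehat \Lambda \cong k^{r+2}$. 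Because $\widehat \Lambda$ is isotropic for both quadratic forms, the restriction of $Q_i$ to this graph is \emph{affine linear} in $\phi$; hence the condition $\Pi \subset X$ is an inhomogeneous linear system in $\phi$ with $2\binom{r+2}{2} = (r+1)(r+2)$ equations in the $(r+1)(r+2)$ entries of $\phi$, whose coefficient matrix depends only on $B_1, B_2$ and $\overline \Pi$. In particular the fibre of $\pi_\Lambda \colon F_r(X)^\circ \to \mathbb G(r, N-r-2)$ over $\overline \Pi$ is an affine subspace of $\Hom(\widehat{\overline \Pi}, \widehat \Lambda)$.

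The key point — and the step I expect to be the main obstacle — is that this square linear system has a \emph{unique} solution for a general $\overline \Pi$, equivalently that the $(r+1)(r+2) \times (r+1)(r+2)$ coefficient matrix is invertible for general $\overline \Pi$. Granting this, the general fibre of $\pi_\Lambda$ is a zero-dimensional affine space, i.e.\ a single point, and $\pi_\Lambda$ is dominant (the system is then always solvable); so $\pi_\Lambda$ is birational and $F_r(X)$ is $k$-birational to $\mathbb G(r, N-r-2)$, proving the theorem. Since the coefficient matrix depends only on $B_1, B_2$ and $\overline \Pi$, while $\overline \Pi$ ranges freely over $\mathbb G(r, N-r-2)$, it is enough to exhibit one $\overline \Pi$ where the determinant is nonzero; I would do this by a direct (if somewhat lengthy) computation using the smoothness of $X$. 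The geometry behind both existence and uniqueness is most transparent when $r = 1$: with $J = \langle \Lambda, \overline \Pi \rangle \cong \mathbb P^{4}$, the surface $\overline{(X \cap J) \setminus \Lambda}$ is the cubic scroll in $\mathbb P^4$, ruled by the lines cut out on $X$ by the $\mathbb P^3$'s through $\Lambda$, and the unique line of $X$ over $\overline \Pi$ is its directrix — which recovers the flavour of the classical $r = 0$ construction. Finally one checks the density of $F_r(X)^\circ$ and the (geometric) irreducibility of $F_r(X)$ — standard in the stated range of $r$, where $F_r(X)$ has the expected dimension $(r+1)(N-2r-2)$ — to conclude that the rational inverse $\mathbb G(r, N-r-2) \dashrightarrow F_r(X)$ has dense image and hence is birational.
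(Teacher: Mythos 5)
Your strategy is genuinely different from the paper's: the paper deduces the theorem from the birational equivalence $F_r(X)\sim_k\Sym^{r+1}\mathcal{Q}^{(r)}$ (Theorem~\ref{thm:symmetric}), the fact that a $k$-point of $F_{r+1}(X)$ yields a section of $\phi^{(r)}\colon\mathcal{Q}^{(r)}\to\P^1$ and hence $k$-rationality of $\mathcal{Q}^{(r)}$ (Proposition~\ref{HT}), and Mattuck's theorem on symmetric powers \cite{Mattuck69}; you instead propose a direct birational map $F_r(X)\dashrightarrow\mathbb{G}(r,N-r-2)$ by projection from $\Lambda$. Much of your scaffolding is sound: the fibres over the locus of graphs are indeed affine subspaces of $\Hom(\widehat{\overline\Pi},\widehat\Lambda)$, so a single nonempty zero-dimensional fibre over $\kbar$ would force, by semicontinuity of fibre dimension, dominance and a single reduced point as general fibre; and density of $F_r(X)^\circ$ is automatic since $F_r(X)$ is smooth and geometrically connected in this range (Lemma~\ref{lem:Kuznetsov_fanoscheme}), hence irreducible, so no separate dimension count is really needed there.

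The genuine gap is exactly the step you yourself defer: the generic invertibility of the $(r+1)(r+2)\times(r+1)(r+2)$ system, equivalently the exhibition of one $\overline\Pi$ with a unique (or just finite nonempty set of) lift. This is the entire content of the theorem in your approach, and "a direct (if somewhat lengthy) computation using the smoothness of $X$" is not a routine verification. Concretely: putting the pencil in a normal form relative to $\Lambda$ as in Lemma~\ref{lem:reid_1}, the mixed blocks become $B_1=(I\mid 0)$ and $B_2=(M\mid P)$ with $M$ diagonal with distinct entries, and the obvious coordinate choices of $\overline\Pi$ (e.g.\ spanned by basis vectors of the block dual to $\Lambda$) make the homogeneous system degenerate, so one must genuinely use the block $P$, about which smoothness along $\Lambda$ only says that its columns are nonzero; a dimension count shows invertibility holds for an abstract generic pair of mixed forms, but your pair varies in a constrained family as $\overline\Pi$ moves, so genericity is not automatic. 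Moreover the $r=1$ cubic-scroll heuristic does not generalize as stated: for $r\geq 2$ one has $\dim(X\cap\langle\Lambda,\overline\Pi\rangle)=2r>r+1$, so $\Lambda$ is no longer a component of $X\cap J$ and there is no residual scroll; instead $X\cap J$ is a singular $2r$-dimensional complete intersection of two quadrics in $\P^{2r+2}$ containing the $(r+1)$-plane $\Lambda$, and the claim that it carries a unique $r$-plane disjoint from $\Lambda$ for general $J$ is a nontrivial statement in the spirit of Lemma~\ref{lem:reid_2} that would need its own proof. Until that lemma is established (and I believe it is true, so the approach is likely completable), the argument does not prove the theorem; alternatively, note that the paper's route via $\Sym^{r+1}\mathcal{Q}^{(r)}$ avoids this issue entirely, at the cost of proving Theorem~\ref{thm:symmetric} first.
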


The Fano scheme \(F_r(X)\) is non-empty exactly for \(0 \leq r \leq \lfloor \frac{N}{2}\rfloor - 1\), and the Fano scheme of maximal linear subspaces \(F_{\lfloor \frac{N}{2}\rfloor - 1}(X)\) is always irrational. So the range in Theorem~\ref{Fanoscheme} is optimal.

An immediate corollary of Theorem~\ref{Fanoscheme} over \emph{algebraically closed} fields is that the Fano schemes of non-maximal linear spaces on $X$ are all rational (Corollary \ref{cor:fano-scheme-rational}\eqref{item:Ci}); that is, \(F_r(X)\) is rational for all \(0\leq r\leq \lfloor \frac{N}{2}\rfloor -2\).
To the authors' knowledge, this result was previously only known for \(r=0\) and \(r = \lfloor\frac{N}{2}\rfloor-2\) (the latter case by combining works of \cite{Desale-Ramanan,Newstead75,Newstead80,bauer91,casagrande15}).
For $0<r<\lfloor\frac{N}{2}\rfloor-2$, only unirationality of $F_r(X)$ for general $X$ was previously known by Debarre--Manivel \cite{DebarreManivel98}.

One may wonder whether the converse of Theorem~\ref{Fanoscheme} holds.
In general, the answer to this question is no: counterexamples for $(r,N)=(0, 4), (0, 6)$ are known over \(\bb R\) by \cite[Remark 37]{HT-intersection-quadrics} and \cite[Propositions 6.1 and 6.2]{HassettKoll'arTschinkel}.
We additionally show that there are counterexamples for \((r,N)=(g-2, 2g)\) for any \(g\geq 2\) (see Corollary~\ref{cor:fano-R-rationality-connected-real-locus-isotopy}\eqref{item:even-maximallinearspace-fails}).

However, for $(r,N)=(0,5)$, the above question has a positive answer over any field. In this case, Hassett--Tschinkel (over $\R$) \cite{HT-intersection-quadrics} and Benoist--Wittenberg (over arbitrary $k$) \cite{BW-IJ} show that \(F_0(X) = X \subset \bb P^5\) is \(k\)-rational if and only if \(F_1(X)(k)\neq\emptyset\).
In this case, lines are the maximal linear subspaces on the threefold \(X\). More generally, when \(N=2g+1\) is odd, the Fano schemes of linear subspaces on \(X\) encode a lot of interesting arithmetic and geometric data. Over \(\kbar\), Weil first observed that
the Fano scheme \(F_{g-1}(X)\) of maximal linear subspaces is isomorphic to the Jacobian of the genus \(g\) hyperelliptic curve obtained as the Stein factorizaton of $F_g(\mathcal{Q}/\P^1)\rightarrow\P^1$ \cite{Gauthier54} (see also \cite{Reid-thesis,Desale-Ramanan,Donagi80}). 
Over \(k\), Wang studied the torsor structure of \(F_{g-1}(X)\) \cite{Wang-maximal-linear-spaces}. For the second maximal linear subspaces, \(F_{g-2}(X)\) is isomorphic over \(\kbar\) to the moduli space of rank 2 vector bundles on the hyperelliptic curve \cite{Desale-Ramanan}.

The main result of this paper
proves a partial converse to Theorem~\ref{Fanoscheme} in the case when \(N=2g+1\) and \(r=g-2\), generalizing Hassett--Tschinkel and Benoist--Wittenberg's results to arbitrary \(g \geq 2\).
To state this result, we first need to introduce a definition. If $F_{r}(X)(k)\neq\emptyset$, choose $\ell \in F_r(X)(k)$ and define $\mathcal{Q}^{(r)}\rightarrow \P^1$ to be the hyperbolic reduction of $\mathcal{Q}\rightarrow \P^1$ with respect to $\ell$ (see Section~\ref{subsection:hyperbolic-reduction}).
The hyperbolic reduction $\mathcal{Q}^{(r)}\rightarrow \P^1$ is itself a quadric fibration
and may be regarded as the relative Fano scheme of isotropic $(r+1)$-planes of $\mathcal{Q}\rightarrow \P^1$ containing $\ell$;
moreover, the $k$-birational equivalence class of $\mathcal{Q}^{(r)}$ does not depend on $\ell$ (see Section~\ref{sec:Q^(r)-construction} for these and additional properties). 
We prove:

\begin{thm}\label{maximallinearspace}
Over a field $k$ of characteristic $\neq 2$,  
fix $g\geq 2$,
and let $X$ be a smooth complete intersection of two quadrics in $\P^{2g+1}$.
Then
$F_{g-2}(X)(k)\neq \emptyset$ and
$\mathcal{Q}^{(g-2)}$ is $k$-rational
if and only if 
$F_{g-1}(X)(k)\neq \emptyset$.
\end{thm}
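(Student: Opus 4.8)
The plan is to prove the two implications separately. The implication ``$F_{g-1}(X)(k)\neq\emptyset$ $\Rightarrow$ $F_{g-2}(X)(k)\neq\emptyset$ and $\mathcal{Q}^{(g-2)}$ is $k$-rational'' is elementary and generalizes the classical projection argument; the converse is the main content, and the approach is a Clemens--Griffiths-type obstruction over $k$ in the spirit of Benoist--Wittenberg, applied not to $X$ but to the threefold $\mathcal{Q}^{(g-2)}$.

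For the easy implication, suppose $\Lambda\in F_{g-1}(X)(k)$. Any $k$-rational hyperplane $\ell\subset\Lambda$ is a $(g-2)$-plane on $X$, so $F_{g-2}(X)(k)\neq\emptyset$ and $\mathcal{Q}^{(g-2)}$ is defined. Hyperbolic reduction by $\ell$ drops the fibrewise rank of $\mathcal{Q}\to\P^1$ by $2(g-1)$, so $\mathcal{Q}^{(g-2)}\to\P^1$ is a quadric surface bundle (its generic fibre is a smooth quadric surface in $\P^3$), and since $\Lambda$ is itself an isotropic $(g-1)$-plane of every fibre $Q_t$ containing $\ell$, the point of $Q_t^{(g-2)}$ it determines is independent of $t$ and thus defines a section of $\mathcal{Q}^{(g-2)}\to\P^1$. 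Consequently the generic fibre of $\mathcal{Q}^{(g-2)}\to\P^1$ is a smooth quadric surface over $k(\P^1)$ with a rational point, hence $k(\P^1)$-rational, so $\mathcal{Q}^{(g-2)}$ is $k$-birational to $\P^2_k\times_k\P^1$ and therefore $k$-rational. (Instead of this last step one could invoke Theorem~\ref{Fanoscheme} to get $k$-rationality of $F_{g-2}(X)$, but it is rationality of $\mathcal{Q}^{(g-2)}$, not of $F_{g-2}(X)$, that is needed.)

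For the converse, assume $F_{g-2}(X)(k)\neq\emptyset$ and $\mathcal{Q}^{(g-2)}$ is $k$-rational, and let $C$ be the genus-$g$ hyperelliptic curve attached to the pencil $\mathcal{Q}\to\P^1$. Now $\mathcal{Q}^{(g-2)}$ is a geometrically rational threefold over $k$ with $H^3(\mathcal{Q}^{(g-2)}_{\kbar})\cong H^1(C_{\kbar})$, so its intermediate Jacobian is $\Jac(C)$ and it carries an intermediate Jacobian \emph{torsor} $J^3(\mathcal{Q}^{(g-2)})$ under $\Jac(C)$; by the Benoist--Wittenberg obstruction, $k$-rationality of $\mathcal{Q}^{(g-2)}$ forces this torsor to be trivial, i.e.\ to have a $k$-point. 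The crucial step is then to identify $J^3(\mathcal{Q}^{(g-2)})$ with $F_{g-1}(X)$ as torsors under $\Jac(C)$ -- recall $F_{g-1}(X)$ is classically such a torsor, by work of Reid and Wang. I would deduce this identification from the structure of hyperbolic reductions, using the birational identification of Fano schemes of linear spaces with symmetric powers of the $\mathcal{Q}^{(r)}$ to produce an algebraic correspondence between $\mathcal{Q}^{(g-2)}$ and $F_{g-1}(X)$ that realizes the torsor isomorphism. (For $g=2$ this degenerates to the birational equivalence $X\sim_k\mathcal{Q}^{(0)}$ together with $J^3(X)=F_1(X)$, recovering the theorems of Hassett--Tschinkel and Benoist--Wittenberg.) Combining the two facts, $F_{g-1}(X)$ is a trivial torsor under $\Jac(C)$, hence $F_{g-1}(X)(k)\neq\emptyset$.

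I expect the principal obstacle to be this identification of torsors: upgrading the $\kbar$-level isomorphism $H^3(\mathcal{Q}^{(g-2)}_{\kbar})\cong H^1(C_{\kbar})$ to an isomorphism of the intermediate Jacobian torsor with $F_{g-1}(X)$ compatible with the Galois and $\Jac(C)$-actions, which requires producing the relevant algebraic correspondence explicitly from the hyperbolic reduction and the tautological family of linear spaces, and checking it induces an isomorphism of torsors rather than merely of the underlying $k$-varieties. A secondary point is to confirm that $\mathcal{Q}^{(g-2)}$ satisfies the hypotheses of the intermediate Jacobian torsor formalism over $k$ (geometric rational connectedness, the needed coherent cohomology vanishing, and control of $\CH^2$), which for a quadric surface bundle over $\P^1$ with simple degeneration should be routine. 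An alternative to the intermediate Jacobian route is to prove a rationality criterion for the quadric surface bundle $\mathcal{Q}^{(g-2)}\to\P^1$ in terms of its even Clifford invariant in $\Br(k(C))$ and then match that class with the class of $F_{g-1}(X)$; there the delicate step is the rationality criterion itself, since a priori $k$-rationality of $\mathcal{Q}^{(g-2)}$ is weaker than the existence of a section.
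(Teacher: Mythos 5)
Your easy direction is correct and is essentially the paper's (Proposition~\ref{HT} with \(r=g-2\)): a \((g-1)\)-plane \(\Lambda\supset\ell\) gives a constant section of the relative Fano scheme of isotropic \((g-1)\)-planes containing \(\ell\), i.e.\ of \(\phi^{(g-2)}\colon\cal Q^{(g-2)}\to\bb P^1\), whence \(k\)-rationality. Your converse also chooses the right framework (the Benoist--Wittenberg \(\bCH^2\)-formalism applied to the threefold \(\cal Q^{(g-2)}\), plus an identification of \(F_{g-1}(X)\) with an intermediate Jacobian torsor via correspondences built from the hyperbolic reduction and the universal family), which is exactly the paper's strategy.

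However, there is a genuine gap at the pivotal step: the Benoist--Wittenberg obstruction does \emph{not} say that \(k\)-rationality of \(\cal Q^{(g-2)}\) forces ``the'' intermediate Jacobian torsor to be trivial. There is no single canonical torsor; for each Galois-invariant class \(\gamma\in\NS^2(\cal Q^{(g-2)}_{\kbar})^{G_k}\) one gets a torsor \((\bCH^2_{\cal Q^{(g-2)}/k})^{\gamma}\), and rationality only forces each of these to be isomorphic to \(\bPic^d_{C/k}\) for \emph{some} integer \(d\) (Theorem~\ref{thm-BWIJ}); such torsors are in general nontrivial. So even after you identify \(F_{g-1}(X)\) with the torsor attached to the correct curve class (in the paper this is \((\bCH^2_{\cal Q^{(g-2)}/k})^{s+af}\), Proposition~\ref{prop-fanoch2s}, built from \(E^{(g-2)}\circ(\bb P^1\times X)\circ U_{g-1}(X)\) --- note it is the class of a section plus fiber lines, not something extracted from Theorem~\ref{thm:symmetric}, which concerns \(F_{g-2}(X)\)), your argument only yields \([F_{g-1}(X)]=[\bPic^d_{C/k}]\) for an unknown \(d\), which does not produce a \(k\)-point. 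The paper closes this gap with an extra arithmetic ingredient: it shows \((\bCH^2_{\cal Q^{(g-2)}/k})^{f}\cong\bPic^1_{C/k}\) as torsors (Proposition~\ref{prop-pic1ch2f}, via the family of lines in fibers and the hyperelliptic involution, keeping track of the principal polarization), and that \(f\equiv 2s\) modulo classes restricted from \(\bb P^1\times\bb P^{g+2}\), whose torsors split (Lemma~\ref{lem:order4obstruction}); together these give the key relation \(2[F_{g-1}(X)]=[\bPic^1_{C/k}]\). Only by combining this relation with the rationality-forced equality \([F_{g-1}(X)]=[\bPic^d_{C/k}]\) and a parity argument does one conclude \([F_{g-1}(X)]=0\). (The same extra input --- Wang's \(2[F_1(X)]=[\bPic^1_{C/k}]\) --- is already needed in the \(g=2\) case of Hassett--Tschinkel and Benoist--Wittenberg, so your claim that the obstruction alone ``forces the torsor to be trivial'' is not how even the known case works.) Without this relation your proposed proof does not go through.
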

Here the hyperbolic reduction $\mathcal{Q}^{(g-2)}$ is a smooth threefold with a quadric surface fibration structure.

In every \emph{even} dimension, the analogous statement to Theorem~\ref{maximallinearspace} fails (see Section~\ref{intro-main3}).
The reason Theorem~\ref{maximallinearspace} gives a partial converse of Theorem~\ref{Fanoscheme} is the following birational description of the Fano schemes, which relates \(F_r(X)\) to the hyperbolic reduction \(\cal Q^{(r)}\):

\begin{thm}\label{thm:symmetric}
Over a field $k$ of characteristic $\neq 2$, let $X$ be a smooth complete intersection of two quadrics in $\P^N$.
Let $0\leq r\leq \lfloor\frac{N}{2}\rfloor -1$.
If $F_r(X)(k)\neq \emptyset$, then one of the following conditions holds.
\begin{enumerate}
\item $F_r(X)$ is $k$-birational to $\Sym^{r+1}\mathcal{Q}^{(r)}$.
\item $N=2g$ and $r=g-1$ for some $g\geq 1$. In this case,  
the subscheme of $F_{g-1}(X)$ parametrizing $(g-1)$-planes on $X$ 
disjoint from $\ell$ is $k$-isomorphic to
the subscheme of $\Sym^{g}\mathcal{Q}^{(g-1)}$ parametrizing $g$-tuples of distinct points of $\mathcal{Q}^{(g-1)}$, and they are \(0\)-dimensional schemes of length $\binom{2g+1}{g}$.
\end{enumerate}
\end{thm}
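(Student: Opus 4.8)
The plan is to fix, once $F_r(X)(k)\neq\emptyset$, an $r$-plane $\ell\in F_r(X)(k)$, and to exhibit an explicit $k$-birational map $F_r(X)\dashrightarrow\Sym^{r+1}\mathcal{Q}^{(r)}$ (resp., in case (2), an explicit isomorphism of the relevant $0$-dimensional schemes) running through the discriminant of a restricted pencil of quadrics. Replace $F_r(X)$ by its open locus $U$ of $r$-planes $\Lambda$ with $\Lambda\cap\ell=\emptyset$; in case (1) this is dense by a dimension count, since $2r<N$ forces the sublocus of $r$-planes on $X$ meeting $\ell$ to be proper, and in case (2) it will turn out to contain all of the relevant points. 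For $\Lambda\in U$ set $\Pi_\Lambda:=\langle\ell,\Lambda\rangle\cong\P^{2r+1}$; then $\ell$ and $\Lambda$ are complementary isotropic $r$-planes for every quadric of the restricted pencil $\mathcal{Q}|_{\Pi_\Lambda}\to\P^1$, so in coordinates adapted to $\ell\oplus\Lambda$ the bilinear form of $sQ_0+tQ_\infty$ on $\Pi_\Lambda$ is the pairing given by a pencil of $(r+1)\times(r+1)$ matrices $sA_0+tA_\infty$. Its determinant is a binary form of degree $r+1$ whose divisor $D_\Lambda\in\Sym^{r+1}\P^1$ is the reduced discriminant of $\mathcal{Q}|_{\Pi_\Lambda}\to\P^1$; at each point $[s_i:t_i]$ of $D_\Lambda$ the quadric $\mathcal{Q}_{[s_i:t_i]}\cap\Pi_\Lambda$ has corank $2$, its vertex is a line meeting $\ell$ and $\Lambda$ in single points $p_i,q_i$, and $M_i:=\langle\ell,q_i\rangle$ is an isotropic $(r+1)$-plane of $\mathcal{Q}_{[s_i:t_i]}$ containing $\ell$, hence a $k$-point of $\mathcal{Q}^{(r)}_{[s_i:t_i]}$ via the description in Section~\ref{sec:Q^(r)-construction}. (More invariantly: the hyperbolic reduction of $\mathcal{Q}|_{\Pi_\Lambda}\to\P^1$ along $\ell$ has empty generic fibre, is a reduced length-$(r+1)$ scheme supported on $D_\Lambda$, and maps naturally into $\mathcal{Q}^{(r)}$.) This defines a rational map $\phi\colon F_r(X)\dashrightarrow\Sym^{r+1}\mathcal{Q}^{(r)}$, $\Lambda\mapsto[M_0]+\dots+[M_r]$, compatible with the projections to $\Sym^{r+1}\P^1$.

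For the inverse, start from a general $[M_0]+\dots+[M_r]$ with the $M_i$ lying over pairwise distinct $[s_i:t_i]$, put $\Pi:=\langle M_0,\dots,M_r\rangle$ — of dimension $2r+1$ generically — and observe, using that the $M_i$ represent points of the hyperbolic reduction, that each $\mathcal{Q}_{[s_i:t_i]}\cap\Pi$ has corank exactly $2$ with $M_i$ a maximal isotropic subspace. Recover $\Lambda$ as the unique $r$-plane in $\Pi$ that is disjoint from $\ell$ and isotropic for the whole pencil $\mathcal{Q}|_{\Pi}\to\P^1$; any such $\Lambda$ automatically lies on $X=\bigcap_{[s:t]}\mathcal{Q}_{[s:t]}$. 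The key input is a statement about pencils of quadrics in $\P^{2r+1}$ sharing a common maximal isotropic subspace $\ell$ with reduced discriminant: such a pencil admits a complementary common isotropic $r$-plane, and fixing one maximal isotropic subspace of each corank-$2$ degenerate member over the discriminant pins it down uniquely — equivalently, the split normal form of such a pencil can be rigidified by this ruling data. Granting this, $\Lambda$ depends only on $[M_0]+\dots+[M_r]$, giving a rational map $\psi\colon\Sym^{r+1}\mathcal{Q}^{(r)}\dashrightarrow F_r(X)$; a direct check on dense opens shows $\phi$ and $\psi$ are mutually inverse, proving (1). As a consistency check, dimensions agree on the nose: $\dim F_r(X)=(r+1)(N-2r-2)=(r+1)\dim\mathcal{Q}^{(r)}$.

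In the exceptional case $N=2g$, $r=g-1$, the $r$-plane $\ell$ is maximal isotropic, so $\mathcal{Q}^{(g-1)}_{[s:t]}=\P(\ell^{\perp}/\ell)$ is empty when $\mathcal{Q}_{[s:t]}$ is smooth and is a single reduced point — the vertex of the cone, which lies off $\ell$ because it lies off $X$ — at each of the $2g+1$ points of $\P^1$ over which $\mathcal{Q}_{[s:t]}$ is a simple cone; these points are distinct because $X$ is smooth, so $\mathcal{Q}^{(g-1)}$ is reduced of length $2g+1$ and the open part of $\Sym^g\mathcal{Q}^{(g-1)}$ parametrizing $g$-tuples of distinct points is reduced of length $\binom{2g+1}{g}$. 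Running the construction above with all schemes now $0$-dimensional: a $(g-1)$-plane $\Lambda$ on $X$ disjoint from $\ell$ yields a degree-$g$ discriminant divisor $D_\Lambda$ each of whose points forces $\mathcal{Q}_{[s_i:t_i]}$ to be singular (a corank-$2$ restriction of a hyperplane section of a smooth quadric is impossible), hence a $g$-subset of the $2g+1$ vertices; the inverse construction — from a $g$-subset, form $\Pi$ and take the unique common complementary isotropic $(g-1)$-plane — identifies this assignment with a bijection, and since both sides are reduced this is an isomorphism of schemes, giving the count $\binom{2g+1}{g}$ and, in particular, that every relevant $(g-1)$-plane is disjoint from $\ell$.

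The main obstacle is the key lemma on pencils of quadrics and the "ruling-matching" it encodes: one must argue that the $M_i$, each a priori only defined on its own member of the pencil, determine a single $r$-plane lying on the \emph{full} intersection $X$, and that it is unique. This rests on a careful analysis of the variety of maximal isotropic subspaces of a corank-$2$ quadric cone and of how the two rulings over distinct degenerate fibres are linked through the pencil, together with the verification — using smoothness of $X$, i.e., that the full pencil $\mathcal{Q}\to\P^1$ has reduced discriminant of degree $N+1$ with only simple-cone degenerations — that the genericity conditions ($[s_i:t_i]$ pairwise distinct, $\mathcal{Q}_{[s_i:t_i]}\cap\Pi$ of corank exactly $2$, $\dim\Pi=2r+1$, $\Lambda\cap\ell=\emptyset$) hold on a dense open. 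The remaining steps — identifying $F_r(X)$ birationally with the configuration space, checking $\phi\circ\psi=\id$ and $\psi\circ\phi=\id$ generically, and the reductions to dense opens — are then routine.
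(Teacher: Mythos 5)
Your overall strategy is the same as the paper's: fix $\ell$, pass to the span $\Pi_\Lambda=\langle\ell,\Lambda\rangle\cong\P^{2r+1}$, read off from the degenerate members of the restricted pencil a collection of $r+1$ points of $\mathcal{Q}^{(r)}$ (your $(r+1)$-planes $M_i=\langle\ell,q_i\rangle$ are exactly the paper's $r$-planes $m_i$ meeting $\ell$ in an $(r-1)$-plane, transported through the identification of $\mathcal{Q}^{(r)}$ with the relative Fano scheme of isotropic $(r+1)$-planes containing $\ell$), and invert by recovering $\Lambda$ as the unique $r$-plane on $X\cap\Pi$ disjoint from $\ell$. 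The dimension count and the treatment of the $0$-dimensional case $N=2g$, $r=g-1$ also match.

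However, there is a genuine gap: the entire technical heart of the argument is the ``key lemma'' that you state and explicitly do not prove (``Granting this\dots'', ``The main obstacle is the key lemma\dots''). What is needed is precisely the content of the lemmas the paper imports from Reid's thesis: (i) if the restricted discriminant $\det(sA_0+tA_\infty)$ is separable of degree $r+1$, the pencil on $\Pi_\Lambda$ can be put simultaneously in the split normal form $\bigl(\begin{smallmatrix}0&I\\I&0\end{smallmatrix}\bigr)$, $\bigl(\begin{smallmatrix}0&M\\M&0\end{smallmatrix}\bigr)$ with $M$ diagonal with distinct entries (the paper's Lemma~\ref{lem:elementary}); and (ii) for such a pencil the $r$-planes on the base locus are exactly the $2^{r+1}$ planes $\langle p_i,q_j\rangle_{i\in I,\,j\notin I}$ (Lemma~\ref{lem:reid_2}), from which one reads off both the uniqueness of the complementary plane ($I=\emptyset$) and the fact that there are exactly $r+1$ planes meeting $\ell$ in an $(r-1)$-plane ($|I|=r$) — i.e.\ your ``ruling-matching'' statement. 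Without (i) and (ii) neither direction of your correspondence is established; gesturing at ``a careful analysis of the rulings of a corank-$2$ cone'' does not substitute for the computation. A secondary gap of the same nature occurs in case (2): to conclude that $D_\Lambda$ consists of $g$ \emph{distinct} points (rather than merely being a degree-$g$ divisor supported on the $2g+1$ singular values) you need that $\det(sA_0+tA_\infty)$ divides the full discriminant of degree $2g+1$, which is separable by smoothness of $X$; this divisibility (Reid's Corollary~3.7, used in the paper's Lemma~\ref{lem:reid-3}) is not supplied by your parenthetical corank argument, which only controls the support. Finally, note that the paper avoids constructing the inverse map altogether: it checks injectivity of $\psi$ on $\kbar$-points (again from the explicit list in (ii)) and concludes that $\psi$ is an open immersion from equality of dimensions plus smoothness of the target at the image points; if you do prove your key lemma, adopting this shortcut would spare you the verification that $\phi$ and $\psi$ are mutually inverse.
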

Two special cases of Theorem~\ref{thm:symmetric} were previously known. When
$N=2g+1 \geq 5$ is odd, $r=g-1$, and the field is \emph{algebraically closed},
Reid proved the birational equivalence of $F_{g-1}(X)$ and $\Sym^g C$, where \(C\) is the genus \(g\) hyperelliptic curve obtained as the Stein factorization of $F_{g}(\mathcal{Q}/\P^1)\rightarrow \P^1$ \cite[Section 4]{Reid-thesis}. The other previously known case is when $N>2$ and $r=0$: Colliot-Th\'el\`ene--Sansuc--Swinnerton-Dyer proved that $X$ is $k$-birational to $\mathcal{Q}^{(0)}$ \cite[Theorem 3.2]{CTSSD}.

Theorem~\ref{thm:symmetric} shows that Theorem~\ref{maximallinearspace} is a partial converse to Theorem~\ref{Fanoscheme} because symmetric powers of \(k\)-rational varieties are also \(k\)-rational \cite{Mattuck69}. However, it does not give the full converse because, in general, it is possible for a symmetric power of an irrational variety to be rational (see Remark~\ref{rem:sym-rational}) and thus $k$-rationality of $\mathcal{Q}^{(g-2)}$ may be stronger than that of $F_{g-2}(X)$. 
For Theorem~\ref{Fanoscheme}, we show that the statement follows from Theorem~\ref{thm:symmetric} by proving that a \(k\)-point on \(F_{r+1}(X)\) gives a section of the quadric fibration \(\cal Q^{(r)} \to \bb P^1\) and hence a \(k\)-rationality construction.

As another application of Theorem~\ref{thm:symmetric}, we prove the following (separable) $k$-unirationality criterion:
\begin{thm}\label{thm:fano-unirational}
Over a field $k$ of characteristic $\neq 2$, 
fix $N\geq 6$, and let $X$ be a smooth complete intersection of two quadrics in $\P^N$.
The following are equivalent:
\begin{enumerate}
\item\label{item:fano-unirational-1} $F_1(X)$ is separably $k$-unirational;
\item\label{item:fano-unirational-2} $F_1(X)$ is $k$-unirational;
\item\label{item:fano-unirational-3} $F_1(X)(k)\neq \emptyset$.
\end{enumerate}
In addition, if \(k=\bb R\) is the real numbers, then the above result holds for all Fano schemes of non-maximal linear subspaces. That is, for every $0\leq r\leq \lfloor \frac{N}{2}\rfloor -2$, \(F_r(X)\) is \(\bb R\)-unirational if and only if it has an \(\bb R\)-point.
\end{thm}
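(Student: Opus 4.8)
The plan is to establish the cycle $(1)\Rightarrow(2)\Rightarrow(3)\Rightarrow(1)$, together with the addendum over $\mathbb R$. The implication $(1)\Rightarrow(2)$ is immediate. For $(2)\Rightarrow(3)$: if $k$ is infinite, a dominant rational map $\mathbb P^n_k\dashrightarrow F_1(X)$ is defined on a dense open on which $k$-points are Zariski dense, so $F_1(X)(k)\neq\emptyset$; if $k$ is finite, $k$-unirationality forces geometric rational connectedness of $F_1(X)$, whence a smooth projective model acquires a $k$-point by Esnault's theorem on rationally connected varieties over finite fields, so again $F_1(X)(k)\neq\emptyset$. The substance is therefore $(3)\Rightarrow(1)$, and for the last sentence its analogue for all non-maximal $F_r$ over $\mathbb R$.

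For $(3)\Rightarrow(1)$, fix $\ell\in F_1(X)(k)$. Since $N\geq 6$ we have $r=1\leq\lfloor N/2\rfloor-2$, so Theorem~\ref{thm:symmetric} applies in case (1) (case (2) would require $r=\tfrac N2-1\geq 2$), giving $F_1(X)\sim_k\Sym^2\mathcal Q^{(1)}$; as $\Sym^n Y$ is dominated by $Y^n$ through a generically Galois (in particular separable) map, it would suffice to show the quadric fibration $\mathcal Q^{(1)}\to\mathbb P^1$, with fibers quadrics of dimension $N-5$, is separably $k$-unirational. Here the line is used geometrically: if $m\subset X$ is a line meeting $\ell$ then on the plane $\langle\ell,m\rangle$ both members of the pencil restrict to the reducible conic $\ell\cup m$, so some member contains $\langle\ell,m\rangle$; running this in reverse identifies $\mathcal Q^{(1)}$ $k$-birationally with the locus $D_\ell\subset F_1(X)$ of lines of $X$ meeting $\ell$, and $m\mapsto m\cap\ell$ realizes $D_\ell$ as a family over $\ell\cong\mathbb P^1_k$ whose fiber over $x$ is the complete intersection $Z_x$ of two quadrics in $\mathbb P^{N-3}_{\kappa(x)}$ carrying the $\kappa(x)$-point $[\ell]$ (the line $\ell$ itself, through $x$). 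When $N\geq 7$ the $Z_x$ have dimension $N-5\geq 2$, so — by the separable $k$-unirationality of a smooth complete intersection of two quadrics of dimension $\geq 2$ containing a rational point (which one may take as known, or prove by induction via the same device, the base case being the quartic del Pezzo surface) — the generic fiber of $D_\ell\to\mathbb P^1$ is separably $k(t)$-unirational and has a $k(t)$-point; spreading a separable dominant parametrization of it over an open subset of $\mathbb P^1$ and combining with the rational base variable yields a separable dominant map $\mathbb P^{m+1}_k\dashrightarrow D_\ell$, so $D_\ell$, hence $\mathcal Q^{(1)}$, hence $F_1(X)$, is separably $k$-unirational.

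The main obstacle is the case $N=6$: there $Z_x$ is a genus-one curve, and $\mathcal Q^{(1)}\to\mathbb P^1$ is a conic bundle with (generically) seven degenerate fibers, so $\mathcal Q^{(1)}$ is geometrically of del Pezzo type of degree one — a class for which $k$-unirationality is open even in the presence of a $k$-point — and one cannot argue through $\mathcal Q^{(1)}$ alone but must exploit the symmetric square. The key structural input is that the \emph{relative} symmetric square $\Sym^2_{\mathbb P^1}\mathcal Q^{(1)}\to\mathbb P^1$ is a Zariski-locally trivial $\mathbb P^2$-bundle, since the symmetric square of a smooth conic $C$ is canonically $\mathbb P(H^0(C,\mathcal O_C(1))^\vee)\cong\mathbb P^2$, hence $k$-rational, while the fibered self-product $\mathcal Q^{(1)}\times_{\mathbb P^1}\mathcal Q^{(1)}$ carries the diagonal section over $\mathcal Q^{(1)}$; the goal is to parlay these, through the projection $\Sym^2\mathcal Q^{(1)}\to\Sym^2\mathbb P^1=\mathbb P^2$, into a dominant rational map onto $\Sym^2\mathcal Q^{(1)}$ from a $k$-rational variety. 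Carrying this through — showing that the symmetric square of this particular conic bundle is separably $k$-unirational as soon as $F_1(X)(k)\neq\emptyset$, without knowing $\mathcal Q^{(1)}$ itself to be $k$-unirational — is the technical heart of the argument.

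Finally, for $k=\mathbb R$ and $0\leq r\leq\lfloor N/2\rfloor-2$: Theorem~\ref{thm:symmetric} again places us in case (1), so $F_r(X)$ is $\mathbb R$-birational to $\Sym^{r+1}\mathcal Q^{(r)}$ with $\mathcal Q^{(r)}\to\mathbb P^1$ a quadric fibration of relative dimension $N-2r-3\geq 1$. Over $\mathbb R$ the obstructions are purely topological and I expect the argument to reduce to the $\mathbb R$-unirationality of such fibrations: in relative dimension one this is the known $\mathbb R$-unirationality of geometrically rational real surfaces possessing a real point, and in higher relative dimension it is easier since smooth real quadrics of dimension $\geq 1$ with a real point are $\mathbb R$-rational. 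Translating an $\mathbb R$-point of $\Sym^{r+1}\mathcal Q^{(r)}$ into the input needed on $\mathcal Q^{(r)}$, and conversely deducing $\mathbb R$-unirationality of the symmetric power, is handled by Mattuck-type arguments for symmetric powers together with a Weil restriction trick over $\mathbb C/\mathbb R$ when the symmetric point is supported only at complex places. I expect this real case to be comparatively routine; the essential difficulty of the theorem is the degree-one conic bundle arising when $N=6$ over a general field.
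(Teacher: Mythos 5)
Your reduction of everything to separable $k$-unirationality of the conic/quadric bundle $\mathcal Q^{(1)}\to\mathbb P^1$ (via Theorem~\ref{thm:symmetric} and the separability of $Y^n\to\Sym^nY$) is exactly the right move, and your geometric treatment of $N\geq 7$ via the family $D_\ell\to\ell$ of lines meeting $\ell$ is a plausible variant of the paper's induction (the paper instead takes a pencil of hyperplane sections through $\ell$ and reduces the generic fiber to the case one dimension lower). But there is a genuine gap precisely where you locate the ``technical heart'': the case $N=6$ is left unproved. You assert that for a conic bundle over $\mathbb P^1$ with seven degenerate fibers and a $k$-point, $k$-unirationality ``is open''; this is not correct. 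Koll\'ar--Mella (\emph{Quadratic families of elliptic curves and unirationality of degree $1$ conic bundles}, cited in the paper as \cite[Theorem 7]{kollarmella17}) prove exactly that such a degree-one conic bundle with a $k$-point is $k$-unirational (by a degree-$8$ map from $\mathbb P^2$, separable in characteristic $\neq 2$), and the $k$-point on $\mathcal Q^{(1)}$ is supplied by Lemma~\ref{lem:hyperbolic-reduction}\eqref{item:rationalpoint} since $N-3r-3=0$. Your proposed workaround via the relative symmetric square is only a program, not an argument, and since your $N\geq 7$ induction also bottoms out at surfaces of this type, the entire implication \eqref{item:fano-unirational-3}$\Rightarrow$\eqref{item:fano-unirational-1} is unsupported as written. (You should also note the paper's preliminary reduction to infinite $k$ via \cite[Lemma 4.9]{BW-IJ}, which your spreading-out arguments implicitly need.)

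The real case is likewise only sketched. The paper's route is short: $\mathcal Q^{(r)}$ has a zero-cycle of degree $1$ by Lemma~\ref{lem:hyperbolic-reduction}\eqref{item:index}, hence an $\mathbb R$-point (a closed point of $\mathbb R$-degree $2$ contributes evenly), and then Koll\'ar's theorem on unirationality of quadric bundles over curves over local fields (\cite[Corollary 1.8]{kollar1999}) applies directly to $\phi^{(r)}\colon\mathcal Q^{(r)}\to\mathbb P^1$; $\mathbb R$-unirationality of $F_r(X)$ then follows from Theorem~\ref{thm:symmetric}. Your plan of ``translating an $\mathbb R$-point of $\Sym^{r+1}\mathcal Q^{(r)}$ back to $\mathcal Q^{(r)}$'' by Weil-restriction tricks is both unnecessary and unfinished; the point is obtained on $\mathcal Q^{(r)}$ first, not on the symmetric power. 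I would encourage you to keep your $D_\ell$ fibration for $N\geq 7$ if you can verify smoothness of the generic fiber and of the point $[\ell]$ on it, but you must cite or reprove the Koll\'ar--Mella input for $N=6$ and the Koll\'ar local-field input over $\mathbb R$ to close the argument.
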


The bound on $N$ in Theorem~\ref{thm:fano-unirational} is 
crucial because $F_1(X)$ is never $\kbar$-unirational for $N \leq 5$. Our result extends previous results for $F_0(X)=X$ due to Manin \cite[Theorems 29.4 and 30.1]{manincubic86}, Knecht \cite[Theorem 2.1]{knecht15}, Colliot-Th\'el\`ene--Sansuc--Swinnerton-Dyer \cite[Remark 3.28.3]{CTSSD}, and Benoist--Wittenberg \cite[Theorem 4.8]{BW-IJ}.
Using Theorem~\ref{thm:symmetric}, the proof of Theorem~\ref{thm:fano-unirational} is reduced to showing separable \(k\)-unirationality of the hyperbolic reduction $\mathcal{Q}^{(1)}$ (and, if \(k=\bb R\), for \(\cal Q^{(r)}\); here we use a result of Koll\'ar over local fields \cite{{kollar1999}}).
Part of the difficulty in generalizing the result for higher \(r\) from \(\bb R\) to other fields lies in the discrepancy between \(k\)-points and \(0\)-cycles of degree \(1\).

In Section~\ref{section-specific-fields}, we also apply Theorems~\ref{Fanoscheme} and~\ref{thm:symmetric} to establish \(k\)-rationality results for \(F_r(X)\) for certain fields \(k\), extending earlier results that were previously known only for $X$.
More precisely, we prove results for $C_i$-fields, $p$-adic fields, totally imaginary number fields, and finite fields, generalizing results of Colliot-Th\'el\`ene--Sansuc--Swinnerton-Dyer \cite[Theorem 3.4]{CTSSD}.
Over algebraically closed fields, using work of Ramanan \cite{Ramanan81} relating Fano schemes of odd-dimensional \(X\) and moduli spaces of certain vector bundles on hyperelliptic curves, we also prove rationality results for these moduli spaces (Corollary~\ref{thm:modulirational}), partially extending the work of Newstead \cite{Newstead75, Newstead80} and King--Schofield \cite{KingSchofield}.

\subsection{Second maximal linear spaces on even-dimensional complete intersections over $\R$}\label{intro-main3}
In the latter part of the paper, we focus on rationality over the field \(\bb R\) of real numbers. For a smooth complete intersection of two quadrics defined over \(\bb R\), Theorem~\ref{Fanoscheme} implies that its Fano schemes of non-maximal linear spaces are $\bb C$-rational. One may further ask when these Fano schemes are rational over \(\bb R\).

The locus of real points encodes additional obstructions to rationality over \(\bb R\): if \(Y\) is an \(\bb R\)-rational smooth projective variety, then \(Y(\bb R)\) is necessarily connected and non-empty. In dimensions \(1\) and \(2\), this topological obstruction characterizes rationality for \(\bb C\)-rational varieties \cite{Comessatti1913}. In higher dimensions, however, this fails in general: in dimension \(\geq 3\), there are \(\bb C\)-rational varieties, with non-empty connected real loci, that are irrational over \(\bb R\) \cite[Theorem 5.7]{BW20}.
Among complete intersections of quadrics \(X \subset \bb P^5\), Hassett--Tschinkel showed that there exist examples that are irrational over \(\bb R\) despite \(X(\bb R)\) being non-empty and connected \cite{HT-intersection-quadrics}. (See also \cite{FJSVV, JJ} for other examples in dimension \(3\).)

Hassett--Koll\'ar--Tschinkel studied \(\bb R\)-rationality for \emph{even}-dimensional complete intersections of quadrics. In particular, they showed that a \(4\)-fold \(X \subset \bb P^6\) is \(\bb R\)-rational if and only if its real locus is non-empty and connected. We prove an analogous result for the second maximal linear spaces on \(X \subset \bb P^{2g}\):

\begin{thm}[Theorem~\ref{thm:fano-R-rationality-connected-real-locus-precise}]\label{thm:fano-R-rationality-connected-real-locus}
Over the real numbers, fix $g\geq 2$, and let $X$ be a smooth complete intersection of two quadrics in $\P^{2g}$.
Then $F_{g-2}(X)$ is $\R$-rational if and only if $F_{g-2}(X)(\R)$ is non-empty and connected. Furthermore, this is equivalent to \(\bb R\)-rationality of the surface $\mathcal{Q}^{(g-2)}$.
\end{thm}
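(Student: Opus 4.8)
The plan is to transport the statement, via Theorem~\ref{thm:symmetric}, to the surface $\mathcal{Q}^{(g-2)}$ and then apply Comessatti's characterization of $\mathbb{R}$-rationality for geometrically rational surfaces. First I would dispose of the case $F_{g-2}(X)(\mathbb{R})=\emptyset$, where none of the three conditions holds (and $\mathcal{Q}^{(g-2)}$ is not even defined), and otherwise fix $\ell\in F_{g-2}(X)(\mathbb{R})$ and form $\mathcal{Q}^{(g-2)}\to\mathbb{P}^1$. Since hyperbolic reduction with respect to the real $(g-2)$-plane $\ell$ passes from each fibral quadric in $\mathbb{P}^{2g}$ to the form induced on the $3$-dimensional space $\ell^{\perp}/\ell$, the map $\mathcal{Q}^{(g-2)}\to\mathbb{P}^1$ is a conic bundle; its discriminant coincides with that of $\mathcal{Q}\to\mathbb{P}^1$ and hence is reduced ($X$ being smooth), so $\mathcal{Q}^{(g-2)}$ is a smooth projective surface, and it is geometrically rational because a conic bundle over $\mathbb{P}^1_{\mathbb{C}}$ is $\mathbb{C}$-rational by Tsen's theorem. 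As $r=g-2\neq g-1$, Theorem~\ref{thm:symmetric} applies in its case~(1): $F_{g-2}(X)$ is $\mathbb{R}$-birational to $\Sym^{g-1}\mathcal{Q}^{(g-2)}$, hence to $\Hilb^{g-1}(\mathcal{Q}^{(g-2)})$, which is smooth and projective; $F_{g-2}(X)$ is itself smooth and projective.

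Two of the implications are then immediate. If $\mathcal{Q}^{(g-2)}$ is $\mathbb{R}$-rational then so is $\Sym^{g-1}\mathcal{Q}^{(g-2)}$ by \cite{Mattuck69}, hence so is $F_{g-2}(X)$ by Theorem~\ref{thm:symmetric}. If $F_{g-2}(X)$ is $\mathbb{R}$-rational then $F_{g-2}(X)(\mathbb{R})$ is non-empty and connected: among smooth projective $\mathbb{R}$-varieties, both emptiness of the real locus and the number $b_0$ of its connected components are $\mathbb{R}$-birational invariants --- by weak factorization one reduces to a blow-up $\Bl_Z W\to W$ along a smooth centre, where the induced map on real loci is proper and surjective with connected fibres (over $Z(\mathbb{R})$ the fibre is a real projective space $\mathbb{P}^{c-1}(\mathbb{R})$, $c=\operatorname{codim}_W Z$), hence a bijection on $\pi_0$ --- and $\mathbb{P}^n(\mathbb{R})$ is non-empty and connected. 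It therefore remains to show that if $F_{g-2}(X)(\mathbb{R})$ is non-empty and connected then $\mathcal{Q}^{(g-2)}$ is $\mathbb{R}$-rational; by Comessatti's theorem \cite{Comessatti1913}, applied to the smooth projective geometrically rational surface $\mathcal{Q}^{(g-2)}$, this reduces to proving that $\mathcal{Q}^{(g-2)}(\mathbb{R})$ is non-empty and connected.

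For non-emptiness I would argue with signatures, this being where the parity of $\dim X$ enters. The real $(g-2)$-plane $\ell$ is totally isotropic for every fibral quadric, so each smooth real fibre $Q_\lambda$ of $\mathcal{Q}\to\mathbb{P}^1$ has signature $(p_\lambda,q_\lambda)$ with $\min(p_\lambda,q_\lambda)\geq g-1$, that is $(p_\lambda,q_\lambda)\in\{(g-1,g+2),(g,g+1),(g+1,g),(g+2,g-1)\}$; the fibre of $\mathcal{Q}^{(g-2)}$ over $\lambda$ then carries the signature $(p_\lambda-(g-1),\,q_\lambda-(g-1))$ and has a real point exactly when $(p_\lambda,q_\lambda)\in\{(g,g+1),(g+1,g)\}$. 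The discriminant of the pencil has odd degree $2g+1$, hence a real root, across which the signature of $Q_\lambda$ changes by $1$; since it can never leave the list above, it must attain a value in $\{(g,g+1),(g+1,g)\}$ at some real $\lambda$, giving a real point of $\mathcal{Q}^{(g-2)}$. For connectedness I would use that the birational invariance of $b_0$ of the real locus gives $b_0(\Hilb^{g-1}(\mathcal{Q}^{(g-2)})(\mathbb{R}))=b_0(F_{g-2}(X)(\mathbb{R}))=1$, together with the following lemma: for a smooth projective geometrically connected surface $S$ over $\mathbb{R}$ with $S(\mathbb{R})\neq\emptyset$ and any $n\geq 1$, $\Hilb^n(S)(\mathbb{R})$ is connected if and only if $S(\mathbb{R})$ is connected. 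To prove the lemma, note that a real length-$n$ subscheme of $S$ decomposes canonically into a part supported on $S(\mathbb{R})$ and a union of complex-conjugate clusters, and its connected component in $\Hilb^n(S)(\mathbb{R})$ is determined by the element of $(\mathbb{Z}/2)^c$ recording, over the $c$ connected components of $S(\mathbb{R})$, the parity of the length of the real part in each --- this element is locally constant, and two subschemes sharing it can be joined by degenerating pairs of real points in a single component of $S(\mathbb{R})$ into conjugate clusters and moving conjugate clusters around (using that $S(\mathbb{C})\setminus S(\mathbb{R})$ is connected); the achievable elements are exactly those with coordinate sum $\leq n$ and $\equiv n\pmod 2$, of which there is a unique one precisely when $c=1$. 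Applying this with $S=\mathcal{Q}^{(g-2)}$, $n=g-1$ yields $b_0(\mathcal{Q}^{(g-2)}(\mathbb{R}))=1$, and Comessatti's theorem then gives the $\mathbb{R}$-rationality of $\mathcal{Q}^{(g-2)}$, closing the cycle of implications.

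The step I expect to be the main obstacle is the non-emptiness input $\mathcal{Q}^{(g-2)}(\mathbb{R})\neq\emptyset$ in combination with the topological lemma: without non-emptiness the lemma genuinely fails, since if $S(\mathbb{R})=\emptyset$ and $n$ is even then $\Hilb^n(S)(\mathbb{R})$ is non-empty and connected (it is connected, being an open subset of a quotient of $\Hilb^{n/2}(S_{\mathbb{C}})$), so the hypothesis that $\dim X$ is even --- which is exactly what forces the discriminant to have odd degree and hence a real root --- is essential and must be used with care. The second delicate point is the identification of $\pi_0$ of $\Hilb^n(S)(\mathbb{R})$ with the set of achievable parity vectors, which relies on the density of reduced real subschemes in $\Hilb^n(S)(\mathbb{R})$ and on the connectedness of the relevant configuration spaces of complex-conjugate clusters on the surface.
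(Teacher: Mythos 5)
Your proposal is correct in outline and reaches the theorem, but it replaces the paper's key mechanism with a genuinely different one. The shared skeleton is the same: dispose of the case \(F_{g-2}(X)(\R)=\emptyset\), use Theorem~\ref{thm:symmetric} plus \cite{Mattuck69} for ``\(\cal Q^{(g-2)}\) \(\R\)-rational \(\Rightarrow\) \(F_{g-2}(X)\) \(\R\)-rational'', use birational invariance of the number of real connected components (the paper simply cites \cite[Theorem 13.3]{DK81}, you re-derive it via weak factorization), pass through the Hilbert--Chow resolution \(\Hilb^{g-1}\cal Q^{(g-2)}\to\Sym^{g-1}\cal Q^{(g-2)}\), and close the loop with Comessatti. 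The two places you diverge are: (i) non-emptiness of \(\cal Q^{(g-2)}(\R)\), which the paper gets for free from the degree-one zero-cycle of Lemma~\ref{lem:hyperbolic-reduction}\eqref{item:index} (over \(\R\), index one forces a real point), while you run a signature argument using that \(\ell\) is isotropic in every fiber and that the discriminant has odd degree \(2g+1\) --- your argument is correct and is essentially the mechanism behind Lemma~\ref{lem:width-frequency}, and it makes visible exactly where even-dimensionality of \(X\) enters; and (ii) the propagation of disconnectedness from \(\cal Q^{(g-2)}(\R)\) to the symmetric power, where the paper exploits the conic-bundle structure (Lemmas~\ref{lem:Q-reallocus-connected} and~\ref{lem:real-locus-symmetric-power}: disconnected real locus forces a disconnected image in \(\P^1(\R)\), hence a disconnected image of \((\Sym^{g-1}\cal Q^{(g-2)})(\R)\) in \(\P^{g-1}(\R)\)), whereas you prove a general statement about \(\pi_0\) of \(\Hilb^n(S)(\R)\) for any smooth projective geometrically connected real surface \(S\) with \(S(\R)\neq\emptyset\), via the parity vector of the real support on the components of \(S(\R)\). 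Your route is more general and fibration-free; the paper's is more elementary and self-contained. Note that for the theorem you only need the easy half of your Hilbert-scheme lemma (the parity vector is locally constant and takes at least two values when \(S(\R)\) has \(\geq 2\) components and \(n\geq 2\); the case \(n=1\), i.e.\ \(g=2\), is trivial), which is solid; the full classification of \(\pi_0\) by achievable parity vectors (density of reduced real subschemes, connectivity of the moves merging two real points into a conjugate pair and transporting conjugate clusters through the connected \(S(\C)\setminus S(\R)\)) requires more care but is not needed, and your caution about the failure of the lemma when \(S(\R)=\emptyset\) is exactly the right caveat --- the paper's proof likewise implicitly relies on \(\cal Q^{(g-2)}(\R)\neq\emptyset\) at this step.
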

Thus, in this case, the aforementioned necessary condition for \(\bb R\)-rationality is in fact sufficient. Moreover, we show that this property is determined by the real isotopy class of \(X\) (Section~\ref{sec:isotopy-consequences}). One can apply Theorem~\ref{thm:fano-R-rationality-connected-real-locus} to construct examples in any even dimension where $\mathcal{Q}^{(g-2)}$ is \(\bb R\)-rational but \(F_{g-1}(X)(\bb R)=\emptyset\) (see Corollary~\ref{cor:fano-R-rationality-connected-real-locus-isotopy}\eqref{item:even-maximallinearspace-fails}), contrasting the odd-dimensional case as shown by Theorem~\ref{maximallinearspace}. 
Furthermore, the analogue of Theorem~\ref{thm:fano-R-rationality-connected-real-locus} fails in every odd dimension (see Example~\ref{exmp:odd-Q^g-2-connected-irrational}).

As an application, in the case of \(4\)-dimensional \(X \subset \bb P^6\), combining Theorems~\ref{thm:fano-unirational} and~\ref{thm:fano-R-rationality-connected-real-locus} with earlier results in \cite{CTSSD,HassettKoll'arTschinkel}, we completely determine $\R$-rationality and $\R$-unirationality of the Fano schemes of non-maximal linear spaces on a $4$-fold complete intersection \(X\) of two real quadrics using an isotopy invariant that was studied by Krasnov \cite{krasnov-biquadrics} (see Section~\ref{sec:krasnov}). This invariant, which was used in the rationality classifications in \cite{HT-intersection-quadrics,HassettKoll'arTschinkel}, is defined using the signatures of the quadrics in the associated pencil \(\cal Q\to\bb P^1\).
In particular, we show that these \(\bb R\)-(uni)rationality properties of \(F_r(X)\) for \(r=0,1\) are controlled solely by the real isotopy class of \(X\), extending the results of \cite{HassettKoll'arTschinkel} on \(\bb R\)-rationality of \(X\):

\begin{cor}\label{cor:classification}
Over the real numbers, let $X$ be a smooth complete intersection of two quadrics in $\P^6$.
\begin{enumerate}
\item\label{item:4-fold-F_1-rational} $F_1(X)$ is $\R$-rational if and only if the Krasnov invariant is one of
$(1)$, $(3)$, $(1,1,1)$, $(2,2,1)$, $(1,1,1,1,1)$, $(2,1,2,1,1)$, or $(1,1,1,1,1,1,1)$.
\item\label{item:4-fold-F_1-unirational} $F_1(X)$ is $\R$-unirational
if and only if
the Krasnov invariant is one of those listed in~\ref{item:4-fold-F_1-rational}, or is
$(3,1,1)$, $(3,2,2)$, $(3,1,1,1,1)$, or $(2,2,1,1,1)$.
\item\label{item:4-fold-X-rational} \cite[Theorem 1.1]{HassettKoll'arTschinkel} $X$ is $\R$-rational if and only if the Krasnov invariant is one of those listed in~\ref{item:4-fold-F_1-unirational}, or is $(5)$, $(4,2,1)$, or $(3,3,1)$.
\item $X$ is $\R$-unirational
if and only if 
the Krasnov invariant is one of those listed in~\ref{item:4-fold-X-rational} or is $(5,1,1)$.
\end{enumerate}
\end{cor}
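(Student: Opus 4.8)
The plan is to reduce the four (uni)rationality statements to purely topological conditions on real loci and then read those conditions off from Krasnov's classification. Throughout, $X\subset\P^6$ is a smooth complete intersection of two real quadrics, so in the notation of the earlier theorems $g=3$ and the second maximal linear spaces are lines, i.e.\ $r=g-2=1$. First I would invoke the two general results already available: Theorem~\ref{thm:fano-unirational} over $k=\bb R$ (for $r=1$ and for $r=0$) gives that $F_1(X)$ is $\bb R$-unirational if and only if $F_1(X)(\bb R)\neq\emptyset$, and that $X$ is $\bb R$-unirational if and only if $X(\bb R)\neq\emptyset$; Theorem~\ref{thm:fano-R-rationality-connected-real-locus} for $g=3$ gives that $F_1(X)$ is $\bb R$-rational if and only if $F_1(X)(\bb R)$ is non-empty and connected; and part~\ref{item:4-fold-X-rational} is exactly \cite[Theorem 1.1]{HassettKoll'arTschinkel}, which moreover says that $X$ is $\bb R$-rational if and only if $X(\bb R)$ is non-empty and connected. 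After these reductions, the whole corollary becomes the problem of deciding, for each Krasnov type, the four conditions
\[
F_1(X)(\bb R)\neq\emptyset,\qquad F_1(X)(\bb R)\ \text{connected},\qquad X(\bb R)\neq\emptyset,\qquad X(\bb R)\ \text{connected}.
\]

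Each of these four conditions is an invariant of the real deformation class of $X$, and since the Krasnov invariant is a complete isotopy invariant for smooth real biquadric fourfolds \cite{krasnov-biquadrics}, each is a function of the Krasnov invariant alone; so it suffices to evaluate them on the finite list of types recalled in Section~\ref{sec:krasnov}. To carry out this evaluation I would pass to the hyperbolic reductions of the pencil $\mathcal{Q}\to\P^1$: by \cite[Theorem 3.2]{CTSSD} one has $X\sim_{\bb R}\mathcal{Q}^{(0)}$ whenever $X(\bb R)\neq\emptyset$, and by case (1) of Theorem~\ref{thm:symmetric} (we are in case (1) since $(N,r)=(6,1)$ does not fall into case (2)) one has $F_1(X)\sim_{\bb R}\Sym^2\mathcal{Q}^{(1)}$ whenever $F_1(X)(\bb R)\neq\emptyset$, where $\mathcal{Q}^{(1)}\to\P^1$ is a conic bundle surface. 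Using the explicit effect of hyperbolic reduction on the discriminant divisor and the fiberwise signatures (Section~\ref{sec:Q^(r)-construction}), both the emptiness and the connectedness of $X(\bb R)$ and of $F_1(X)(\bb R)$ are determined by the signature configuration of $\mathcal{Q}$ encoded by the Krasnov invariant; for the connectedness statements one uses, at the level of surfaces, the Comessatti-type description of the real locus of a geometrically rational real surface (here the real conic bundle $\mathcal{Q}^{(1)}/\P^1$) \cite{Comessatti1913}.

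Running this analysis through Krasnov's list should produce the tables behind the statement. I expect to find: $X(\bb R)\neq\emptyset$ exactly for the types in~\ref{item:4-fold-X-rational} together with $(5,1,1)$ — equivalently, exactly when no member of the pencil $\mathcal{Q}$ is definite — which gives part~(4) via Theorem~\ref{thm:fano-unirational}; among these, $X(\bb R)$ is connected exactly for the types in~\ref{item:4-fold-X-rational}, which is part~(3); $F_1(X)(\bb R)\neq\emptyset$ exactly for the eleven types in~\ref{item:4-fold-F_1-unirational}, giving part~(2); and among these, $F_1(X)(\bb R)$ is connected exactly for the seven types in~\ref{item:4-fold-F_1-rational}, giving part~(1). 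Combining these tables with the reductions of the first paragraph then yields all four equivalences.

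The hard part is the topological bookkeeping itself, and within it the two $F_1$-statements rather than the $X$-statements (the latter being essentially \cite{HassettKoll'arTschinkel}). Deciding non-emptiness of $F_1(X)(\bb R)$ is delicate because it is strictly stronger than $X(\bb R)\neq\emptyset$ and cannot be tested on $\mathcal{Q}^{(1)}$ — which is only defined once a real line on $X$ is known — so it must be extracted directly from the signatures of $\mathcal{Q}$, i.e.\ from the existence of a $2$-dimensional subspace of $\bb R^7$ on which every quadric of the pencil vanishes. And counting the connected components of the real locus of the conic bundle surface $\mathcal{Q}^{(1)}\to\P^1$, governed by its real degeneration locus and the signs of the degenerate conics, is precisely the input separating part~(1) from part~(2); getting this right for every Krasnov type and matching it against Krasnov's tables is the crux of the argument.
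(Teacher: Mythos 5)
Your proposal is correct and follows essentially the same route as the paper: reduce each of the four statements to conditions on real loci via Theorem~\ref{thm:fano-unirational}, Theorem~\ref{thm:fano-R-rationality-connected-real-locus-precise}, and \cite[Theorem 1.1]{HassettKoll'arTschinkel}, then evaluate those conditions type-by-type from the signature data of the pencil via the hyperbolic reductions $\mathcal{Q}^{(0)}$ and $\mathcal{Q}^{(1)}$ (the paper packages this signature bookkeeping as the height/frequency criterion of Lemma~\ref{lem:width-frequency} and the resulting lists in Corollary~\ref{cor:classification-4-fold}), and your expected lists agree with the paper's.
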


\subsection{Outline}
In Section~\ref{sec:background}, we recall preliminary results on Fano schemes of linear subspaces on complete intersections of quadrics, the definition and properties of hyperbolic reductions, lemmas on pencils of quadrics from Reid's thesis \cite{Reid-thesis}, and Benoist--Wittenberg's codimension 2 Chow scheme for threefolds. In Section~\ref{section-fano-hyperbolic}, we describe the hyperbolic reduction of a pencil of quadrics. We prove Theorems~\ref{Fanoscheme}, \ref{thm:symmetric}, and~\ref{thm:fano-unirational} in this section, and as consequences, we derive rationality results over certain fields in Section~\ref{section-specific-fields}. Section~\ref{section-maximal}, which is the most technical part of the paper, is devoted to the proof of Theorem~\ref{maximallinearspace} on odd-dimensional complete intersections. Finally, in Section~\ref{sec:real-even} we turn to the case of even-dimensional complete intersections over \(\bb R\). Here we prove Theorem~\ref{thm:fano-R-rationality-connected-real-locus} and Corollary~\ref{cor:classification}, and we give examples contrasting the behavior in the even- and odd-dimensional cases.

\subsection*{Notation}
Throughout \(k\) is a field of characteristic \(\neq 2\). For a variety \(X\), we use \(x\in X\) to denote a scheme-theoretic point.
We say a variety \(X\) over \(k\) is \(k\)-rational to emphasize that the rationality construction is defined over \(k\); when we say rational without specifying the ground field, we usually mean \(\kbar\)-rational over an algebraically closed field.

For a smooth variety $X$ over \(k\) and
\(i\geq 0\), we let \(\CH^i(X)\) denote the Chow group of codimension \(i\) cycles on \(X\). We denote the subgroup of algebraically trivial cycles by \(\CH^i(X)_{\alg} \subset \CH^i(X)\), and we denote the quotient by \(\NS^i(X) = \CH^i(X)/\CH^i(X)_{\alg}\).
For smooth projective varieties $X,Y, Z$ and correspondences $\Gamma\in \CH^i(X\times Y)$ and $\Gamma' \in \CH^j(Y\times Z)$, we denote their composition by
\[\Gamma'\circ\Gamma \coloneqq (\pi_{X\times Z})_*((\pi_{X\times Y})^*\Gamma\cdot (\pi_{Y\times Z})^*\Gamma')\in \CH^{i+j-\dim Y}(X\times Z),\]
where $\pi_{X\times Y}\colon X\times Y\times Z\rightarrow X\times Y$, $\pi_{Y\times Z}\colon X\times Y\times Z\rightarrow Y\times Z$, and $\pi_{X\times Z}\colon X\times Y\times Z\rightarrow X\times Z$ are the projections.
For a curve \(C\) over \(k\), we use \(\Pic(C)\) to denote the Picard group over \(k\), and \(\bPic_{C/k}\) to denote the relative Picard scheme over \(k\).
For a variety \(X\) over \(k\) and \(r\geq 0\), \(\Sym^r X\) denotes the \(r\)th symmetric power (over \(k\)).

If \(\ell_1, \ldots, \ell_{r} \subset \bb P^n\) are linear subspaces, we denote their span by \(\langle \ell_1, \ldots, \ell_r\rangle\).
For $n\geq 0$, we denote the $n\times n$ identity matrix by $I_n$.

\subsection*{Acknowledgements}
We thank Brendan Hassett and Jerry Wang for interesting discussions.
We thank Olivier Debarre for feedback
on an earlier version of the paper,
and Jean-Louis Colliot-Th\'el\`ene for comments, in particular for communicating to us the applications in Section~\ref{sec:arithmetic-applications} and for pointing out that Theorem~\ref{thm:clifford-algebra} does not require the assumption that \(F_{g-2}(X)(k)\neq\emptyset\).
We are also grateful to the anonymous referees for their careful reading of this paper and helpful comments.
This work started during a visit of the first author to UCLA, and she thanks Joaqu\'in Moraga and Burt Totaro for their hospitality and for providing a welcoming environment.

\section{Preliminaries}\label{sec:background}

\subsection{Fano schemes of $r$-planes on a smooth complete intersection of two quadrics}

Let \(X\subset\bb P^N\) be a smooth complete intersection of two quadrics. For non-negative integers $r\geq 0$, $F_r(X)$ denotes the Fano scheme of $r$-planes on $X$. In this section, we recall some preliminary results about these Fano schemes \(F_r(X)\).

\begin{lem}[{\cite[Lemmas A.3 and A.4]{CT-intersection-quadrics}}]\label{lem:Kuznetsov_fanoscheme}
Over a field \(k\) of characteristic \(\neq 2\), let $X$ be a smooth complete intersection of two quadrics in $\P^{N}$. The following hold.
\begin{enumerate}
\item 
If $r>\lfloor \frac{N}{2}\rfloor -1$,
$F_r(X)$ is empty.
\item 
If $0\leq r\leq \lfloor \frac{N}{2}\rfloor -1$,
$F_r(X)$ is non-empty, smooth, projective, and of dimension $(r+1)(N-2r-2)$.
\item If $0\leq r< \frac{N}{2} -1$ (or equivalently if $\dim F_r(X)>0$), then $F_r(X)$ is geometrically connected.
\end{enumerate}
\end{lem}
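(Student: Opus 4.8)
The plan is to first reduce to the case $k=\kbar$, since emptiness and nonemptiness as a scheme, smoothness, dimension, and geometric connectedness are all unchanged under base field extension; so assume $k$ is algebraically closed. One realizes $F_r(X)$ as the zero scheme of the section of $(\Sym^2\cal S^\vee)^{\oplus 2}$ on the Grassmannian $\bb G:=\bb G(r,N)$ of $r$-planes in $\P^N$ induced by the two defining quadrics, where $\cal S$ is the tautological rank-$(r+1)$ subbundle; this exhibits $F_r(X)$ as a projective scheme and as a closed subscheme of $\bb G$.

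For smoothness and the dimension formula I would argue by deformation theory. At a point $[\ell]\in F_r(X)$, with $\ell\cong\P^r$, the Zariski tangent space is $H^0(\ell,N_{\ell/X})$ and obstructions lie in $H^1(\ell,N_{\ell/X})$. The normal bundle sequence $0\to N_{\ell/X}\to N_{\ell/\P^N}\to N_{X/\P^N}|_\ell\to 0$, the identifications $N_{\ell/\P^N}\cong\O_{\P^r}(1)^{\oplus(N-r)}$ and $N_{X/\P^N}|_\ell\cong\O_{\P^r}(2)^{\oplus 2}$, and the vanishing $H^i(\P^r,\O(d))=0$ for $i\ge1$, $d\ge0$, together identify $H^1(\ell,N_{\ell/X})$ with the cokernel of the restriction map $H^0(N_{\ell/\P^N})\to H^0(N_{X/\P^N}|_\ell)$. \emph{The main obstacle is to show this restriction map is surjective for every $r$-plane on every smooth $X$.} In coordinates with $\ell=\{x_{r+1}=\cdots=x_N=0\}$ and the Gram matrix of each defining quadric written in block form $\widehat Q_i=\bigl(\begin{smallmatrix}0 & B_i\\ B_i^{\mathrm T} & C_i\end{smallmatrix}\bigr)$ ($i=0,1$), the upper-left block vanishing since $\ell$ lies on both quadrics, this surjectivity becomes the linear-algebra assertion that $S_0B_0+S_1B_1=0$ with $S_0,S_1$ symmetric forces $S_0=S_1=0$, and this is exactly where one must feed in the smoothness of $X$, equivalently that the degree-$(N+1)$ binary form $\det(s\widehat Q_0+t\widehat Q_1)$ is squarefree. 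Granting this, $H^1(\ell,N_{\ell/X})=0$, so $F_r(X)$ is smooth at $[\ell]$ of dimension $h^0(N_{\ell/X})=\chi(N_{\ell/X})=(r+1)(N-r)-(r+1)(r+2)=(r+1)(N-2r-2)$; as $[\ell]$ is arbitrary, $F_r(X)$ is smooth and, when nonempty, equidimensional of this dimension. This also proves (1): if $r>\lfloor N/2\rfloor-1$ then $(r+1)(N-2r-2)<0$, impossible for a nonempty scheme, so $F_r(X)=\emptyset$.

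For nonemptiness in (2) I would spread out over the parameter space of pencils. Let $B$ be the (smooth, irreducible) variety parametrizing pencils of quadrics in $\P^N$, let $B^\circ\subseteq B$ be the nonempty open locus of pencils cutting out a smooth complete intersection, and let $I=\{(\ell,P):\text{every quadric of }P\text{ vanishes on }\ell\}\subseteq\bb G(r,N)\times B$. The projection $I\to\bb G(r,N)$ makes $I$ a Grassmann bundle, hence $I$ is smooth and irreducible, and $\pi\colon I\to B$ is projective with $\pi^{-1}(P)=F_r(X_P)$. By the previous paragraph, every fiber of $\pi$ over $B^\circ$ is smooth of dimension $(r+1)(N-2r-2)=\dim I-\dim B$, so $\pi$ is flat—hence smooth—over $B^\circ$ by miracle flatness. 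Since a smooth complete intersection of two quadrics containing a fixed $r$-plane exists (for instance by a short Bertini-type argument using $2r+2\le N$, or by the classical theory of linear spaces on such intersections), $\pi(I)$ meets $B^\circ$; this intersection is closed in $B^\circ$ as $\pi$ is proper, open as $\pi$ is smooth over $B^\circ$, and nonempty, so it is all of $B^\circ$ by connectedness. Thus $F_r(X)\ne\emptyset$ for every smooth $X$, completing (2).

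Finally, for the geometric connectedness in (3), assume $\dim F_r(X)=(r+1)(N-2r-2)>0$, i.e.\ $r<\tfrac N2-1$. The restriction $\pi\colon\pi^{-1}(B^\circ)\to B^\circ$ is smooth and proper with irreducible total space and connected base, so the number of connected components of its geometric fibers is locally constant, hence constant; thus all these fibers are connected as soon as one is. For a general pencil $P$, the fiber $F_r(X_P)$ is geometrically irreducible—in particular connected—by Debarre--Manivel \cite{DebarreManivel98} (when $r=0$ one can instead invoke directly that a positive-dimensional complete intersection in $\P^N$ is connected). Hence $F_r(X)$ is geometrically connected for every smooth $X$. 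I expect the block-matrix computation of the second paragraph to be the real crux; granted it, the other parts are essentially formal.
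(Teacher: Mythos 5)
The paper gives no proof of this lemma at all---it is quoted from \cite[Lemmas A.3 and A.4]{CT-intersection-quadrics}---so a self-contained argument is welcome, and your reductions are set up correctly: the sequence $0\to N_{\ell/X}\to\O_{\P^r}(1)^{\oplus(N-r)}\to\O_{\P^r}(2)^{\oplus 2}\to 0$ does reduce smoothness and the dimension formula to surjectivity of $H^0(N_{\ell/\P^N})\to H^0(N_{X/\P^N}|_\ell)$, and dualizing this is exactly your block-matrix statement that $S_0B_0+S_1B_1=0$ with $S_0,S_1$ symmetric forces $S_0=S_1=0$. But you never prove that statement: you call it ``the main obstacle'' and then write ``granting this''. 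Since it carries essentially all of the mathematical content of part (2), this is a genuine gap, not a routine verification. (It can be closed, for instance, with the paper's Lemma~\ref{lem:reid_1}: in Reid's coordinates $B_0=(I\;\;0)$ and $B_1=(M\;\;P)$, so $S_0=-S_1M$; symmetry of $S_0$ gives $S_1M=MS_1$, hence $S_1$ is diagonal because $M$ has distinct diagonal entries; and $S_1P=0$ then forces $S_1=0$ because no row of $P$ can vanish---if the $i$th row vanished, the gradients of the two quadrics at the coordinate point $p_i\in\ell$ would be proportional, contradicting smoothness of $X$.)

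Moreover, your deduction of part (1) from the granted claim is circular. For $r>\lfloor N/2\rfloor-1$ one has $(r+1)(r+2)>(r+1)(N-r)$, so the map $(S_0,S_1)\mapsto S_0B_0+S_1B_1$ can never be injective; the claim you are granting is therefore false for any $r$-plane that might exist in this range, and granting it there amounts to assuming $F_r(X)=\emptyset$. Concretely, if such an $\ell$ existed, the only conclusion available would be $H^1(N_{\ell/X})\neq 0$, and no contradiction with ``negative dimension'' arises. Part (1) needs its own argument, e.g.: a smooth member of the pencil (which exists since the discriminant has distinct roots) contains no linear space of dimension $>\lfloor(N-1)/2\rfloor$, and in the remaining boundary case $N$ odd, $r=(N-1)/2$, the span of $\ell$ with the vertex of a corank-one singular member would be an $(r+1)$-plane on that quadric, exceeding the maximal dimension $\lfloor(N-2)/2\rfloor+1=r$ of linear spaces on a corank-one quadric. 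Your non-emptiness and connectedness arguments (miracle flatness over the locus of smooth pencils, local constancy of the number of geometric components of a smooth proper family, Debarre--Manivel for one fiber) are reasonable in outline, but they too lean on the unproved surjectivity (for equidimensionality of the fibers) and on the asserted-but-unproved existence of one smooth $X$ containing an $r$-plane.
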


For the sake of completeness, we add the following results, which will not be used in the rest of the paper.

\begin{thm}
In the setting of Lemma~\ref{lem:Kuznetsov_fanoscheme}, the following hold.
\begin{enumerate}
\item\label{item:fanoschemes-structures-finite} If $N=2g$ and $r=g-1$ for some $g\geq 1$, $F_{g-1}(X)$ is a torsor under the finite group scheme $\bPic^0_{C/k}[2]$, where $C$ is a certain curve of genus $g$ associated to $X$. The curve $C$ depends on the choice of two quadrics defining $X$, but $\bPic^0_{C/k}[2]$ does not depend on this choice.
\item\label{item:fanoschemes-structures-torsor} If $N=2g+1$ and $r=g-1$ for some $g\geq 1$,
$F_{g-1}(X)$ is a torsor under $\bPic_{C/k}^0$, where $C$ is the curve of genus $g$ obtained as the Stein factorization of $F_g(\mathcal{Q}/\P^1)\rightarrow\P^1$.
\item\label{item:fanoschemes-structures-fano} If $0\leq r\leq \lfloor \frac{N}{2}\rfloor -2$, $F_r(X)$ is a Fano variety, i.e., the anti-canonical divisor $-K_{F_r(X)}$ is ample.
\end{enumerate}
\end{thm}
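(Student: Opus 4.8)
The plan is to prove part~\eqref{item:fanoschemes-structures-fano} by a direct adjunction computation on a Grassmannian, and to deduce parts~\eqref{item:fanoschemes-structures-finite} and~\eqref{item:fanoschemes-structures-torsor} from the classical geometry of the pencil of quadrics together with Wang's analysis of the torsor structure of maximal linear spaces \cite{Wang-maximal-linear-spaces}. For part~\eqref{item:fanoschemes-structures-fano}, let $G=\mathbb{G}(r,\mathbb{P}^N)$ be the Grassmannian of $r$-planes, with tautological subbundle $\mathcal{S}\hookrightarrow\mathcal{O}_G^{\oplus(N+1)}$ of rank $r+1$ and Pl\"ucker line bundle $\mathcal{O}_G(1)=\det\mathcal{S}^\vee$. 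A quadric in $\mathbb{P}^N$ determines a section of $\Sym^2\mathcal{S}^\vee$ whose zero locus parametrizes the $r$-planes it contains, so $F_r(X)$ is the zero scheme of a section of $E:=(\Sym^2\mathcal{S}^\vee)^{\oplus 2}$, a bundle of rank $(r+1)(r+2)$. By Lemma~\ref{lem:Kuznetsov_fanoscheme}, $F_r(X)$ is smooth of dimension $(r+1)(N-2r-2)=\dim G-\rank E$, so the section is regular and $F_r(X)$ is a smooth local complete intersection with normal bundle $E|_{F_r(X)}$; adjunction then gives $\omega_{F_r(X)}\cong(\omega_G\otimes\det E)|_{F_r(X)}$. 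Since $\omega_G=\mathcal{O}_G(-(N+1))$ and $\det\Sym^2\mathcal{S}^\vee=(\det\mathcal{S}^\vee)^{\otimes(r+2)}=\mathcal{O}_G(r+2)$, so that $\det E=\mathcal{O}_G(2(r+2))$, we obtain $-K_{F_r(X)}=\mathcal{O}_G(N-2r-3)|_{F_r(X)}$, which is ample --- being the restriction of a very ample bundle --- exactly when $N-2r-3\geq 1$, i.e.\ when $r\leq\lfloor N/2\rfloor-2$. In that range $\dim F_r(X)>0$, so by Lemma~\ref{lem:Kuznetsov_fanoscheme} the scheme $F_r(X)$ is also geometrically connected, hence geometrically integral, and so is a Fano variety.

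For parts~\eqref{item:fanoschemes-structures-finite} and~\eqref{item:fanoschemes-structures-torsor}, $r=g-1=\lfloor N/2\rfloor-1$ parametrizes the maximal linear spaces on $X$, and the relevant curve $C$ is a genus $g$ double cover of $\mathbb{P}^1$: for $N=2g+1$ it is the Stein factorization of $F_g(\mathcal{Q}/\mathbb{P}^1)\to\mathbb{P}^1$, branched over the $2g+2$ singular members of the pencil (the covering involution exchanging the two rulings of a smooth fiber), while for $N=2g$ it is the double cover $y^2=\det(sA+tB)$, branched over the $2g+1$ singular members of the pencil together with one further point; in both cases Riemann--Hurwitz gives genus $g$. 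Over $\kbar$ the required identifications are classical. When $N=2g+1$, one sends $\Lambda\in F_{g-1}(X)$ to the family, indexed by $c\in C$ lying over $t\in\mathbb{P}^1$, of residual linear spaces $\Lambda'_c$ determined by $M_c\cap X=\Lambda\cup\Lambda'_c$, where $M_c$ is the unique maximal isotropic subspace of the fiber $Q_t$ in the ruling $c$ containing $\Lambda$; this exhibits $F_{g-1}(X)$ as a torsor under $\bPic^0_{C/\kbar}$. When $N=2g$, $F_{g-1}(X)$ is a reduced $0$-dimensional scheme of length $2^{2g}$ which is a torsor under $\bPic^0_{C/\kbar}[2]$, the action being built from the $2$-torsion classes attached to the Weierstrass points of $C$. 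See \cite{Gauthier54,Reid-thesis,Donagi80,Desale-Ramanan}. The content of parts~\eqref{item:fanoschemes-structures-finite} and~\eqref{item:fanoschemes-structures-torsor} over a general field $k$ is that these correspondences --- in particular the difference map $F_{g-1}(X)\times F_{g-1}(X)\to\bPic^0_{C/k}$ (resp.\ its $2$-torsion subgroup) --- are Galois-equivariant and descend to $k$, which is carried out in \cite{Wang-maximal-linear-spaces}.

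Finally, for the independence claim in part~\eqref{item:fanoschemes-structures-finite}: the curve $C\colon y^2=\det(sA+tB)$ genuinely depends on the chosen pair of defining quadrics (which, beyond the $2g+1$ singular members, amount to a choice of an auxiliary Weierstrass point and a square class), but its $2$-torsion does not --- one has a canonical isomorphism $\bPic^0_{C/k}[2]\cong\Res_{L/k}(\mathbb{Z}/2\mathbb{Z})/(\mathbb{Z}/2\mathbb{Z})$, where $L$ is the \'etale $k$-algebra cut out by the $2g+1$ singular members of $\mathcal{Q}\to\mathbb{P}^1$ and the quotient is by the diagonally embedded $\mathbb{Z}/2\mathbb{Z}$ --- and since $L$ depends only on $X$, so does $\bPic^0_{C/k}[2]$. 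The one genuinely delicate point is therefore not the Chern class bookkeeping in~\eqref{item:fanoschemes-structures-fano} but the descent to $k$ in~\eqref{item:fanoschemes-structures-finite} and~\eqref{item:fanoschemes-structures-torsor}, i.e.\ realizing the torsor structures by actual $k$-morphisms rather than only after base change; this is the main technical input, and is supplied by \cite{Wang-maximal-linear-spaces}.
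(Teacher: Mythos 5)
Your proposal is correct, and it is worth separating the three parts, because the paper's own proof is a pure citation: part (3) to \cite[Remarque 3.2]{DebarreManivel98}, part (2) to \cite[Theorem 1.1]{Wang-maximal-linear-spaces}, and part (1) to \cite[Section 6]{BG14}. For part (3) you replace the citation by a complete adjunction argument: realizing $F_r(X)$ as the zero scheme of a section of $(\Sym^2\mathcal{S}^\vee)^{\oplus 2}$ on the Grassmannian, noting that smoothness of the expected dimension $(r+1)(N-2r-2)=\dim G-\rank E$ (Lemma~\ref{lem:Kuznetsov_fanoscheme}) forces the section to be regular, and computing $-K_{F_r(X)}=\mathcal{O}_G(N-2r-3)|_{F_r(X)}$, which is ample precisely for $r\leq\lfloor N/2\rfloor-2$. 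This computation is correct (the numerics $\omega_G=\mathcal{O}_G(-(N+1))$, $\det E=\mathcal{O}_G(2r+4)$ check out) and is essentially what underlies the cited remark of Debarre--Manivel; it buys self-containedness at no cost. For parts (1) and (2) you do what the paper does: sketch the classical geometry over $\kbar$ (residual $(g-1)$-planes indexed by the two rulings of the fibers for $N=2g+1$; the $2^{2g}$ maximal planes and $\bPic^0_{C/k}[2]$ for $N=2g$) and defer the genuinely hard point --- that these identifications are Galois-equivariant and give torsor structures over $k$ --- to the literature. Your added argument for the independence claim in (1), via the canonical description $\bPic^0_{C/k}[2]\cong\Res_{L/k}(\Z/2)/(\Z/2)$ with $L$ the \'etale algebra of the degeneracy divisor (which depends only on $X$, while the odd hyperelliptic model depends on the chosen basis of the pencil), is a nice touch and is consistent with the behavior under quadratic twists. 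The one caveat is attribution: the paper credits the even-dimensional torsor statement (1) to Bhargava--Gross \cite[Section 6]{BG14}, whereas the theorem of Wang you invoke concerns the odd-dimensional case $N=2g+1$ of part (2); if you rely on \cite{Wang-maximal-linear-spaces} for the $k$-descent in part (1), you should verify that it actually covers pencils in an odd number of variables, or cite \cite{BG14} instead. This is a referencing issue rather than a mathematical gap.
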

\begin{proof}
\eqref{item:fanoschemes-structures-finite}, \eqref{item:fanoschemes-structures-torsor}, and~\eqref{item:fanoschemes-structures-fano} respectively follow from 
\cite[Section 6]{BG14},
\cite[Theorem 1.1]{Wang-maximal-linear-spaces}, and \cite[Remarque 3.2]{DebarreManivel98}.
\end{proof}

\subsection{Quadric bundles and hyperbolic reductions}\label{subsection:hyperbolic-reduction}

In this section, we recall the definition and basic properties of quadric fibrations and of the hyperbolic reductions of a quadric fibrations; see, e.g., \cite[Section 2]{Kuznetsov-hyperbolic}, \cite[Section 1.3]{ABB14} for more details.

Let \(S\) be an integral separated Noetherian scheme over a field \(k\) of characteristic \(\neq 2\). A \defi{quadric fibration} over \(S\) is a morphism \(\cal Q\to S\) that can be written as a composition \(\cal Q\hookrightarrow\bb P_S(\cal E) \coloneqq \underline{\Proj}_S \Sym^\bullet (\cal E^\vee) \to S\)
where \(\cal E\) is a vector bundle and \(\cal Q\hookrightarrow\bb P_S(\cal E)\) is a divisor of relative degree 2 over \(S\). A quadric fibration is determined by a quadratic form \(q\colon\Sym^2\cal E\to\cal L^\vee\) with values in a line bundle \(\cal L\), and we define the \defi{degeneration divisor} of the quadric fibration to be the zero locus of the determinant of $q$ .

\begin{lem}[{\cite[Proposition 1.2.5]{ABB14}}]\label{lem:quadric-bundle-simple-degeneration}
    Let \(S\) be a smooth scheme over a field of characteristic \(\neq 2\), and let \(\pi\colon\cal Q\to S\) be a flat quadric fibration with smooth generic fiber. Then the degeneration divisor of \(\pi\) is smooth over \(k\) if and only if \(\cal Q\) is smooth over \(k\) and \(\pi\) has simple degeneration (i.e., the fibers of \(\pi\) have corank at most 1).
\end{lem}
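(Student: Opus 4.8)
The plan is to prove Lemma~\ref{lem:quadric-bundle-simple-degeneration} by a local computation, reducing the statement to a fact about families of quadratic forms over the local rings of $S$. First I would fix a closed point $s\in S$ and work in the local ring $\mathcal{O}_{S,s}$, choosing a trivialization of $\mathcal{E}$ and of $\mathcal{L}$ near $s$, so that the quadric fibration is cut out by a single quadratic form $q=\sum_{i\le j} a_{ij}(s)\,x_i x_j$ with $a_{ij}\in\mathcal{O}_{S,s}$, and (since $\mathrm{char}\ne 2$) I may diagonalize or at least present the associated symmetric matrix $A(s)=(a_{ij})$. The degeneration divisor is then $\{\det A = 0\}$, and the fiber over a point $s'$ has corank equal to $\dim\ker A(s')$.

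The key steps, in order, are the following. \emph{Step 1 (corank $\le 1$ $\Leftrightarrow$ smoothness of the total space along a fiber).} Over a field extension $\kappa$ of $k$, a point of $\mathcal{Q}$ lying over a point where the fiber has corank $c$ is a singular point of $\mathcal{Q}$ precisely when $c\ge 2$, together with the hypothesis that the generic fiber is smooth (so no component of $\mathcal{Q}$ is contained in a fiber); this is the standard Jacobian criterion computation for a quadric cone, where the singular locus of the fiber has dimension $c-1$ and one checks whether the ambient differentials in the $S$-directions pick up the drop in rank. So $\mathcal{Q}$ is smooth over $k$ iff every geometric fiber has corank $\le 1$, i.e. $\pi$ has simple degeneration. \emph{Step 2 (smoothness of the degeneration divisor $\Leftrightarrow$ smoothness of $\mathcal{Q}$ plus simple degeneration).} Assuming simple degeneration, diagonalize $q$ near a point of the degeneration divisor as $q = u_1 x_1^2 + \dots + u_{n-1} x_{n-1}^2 + f\, x_n^2$ with $u_i$ units and $f\in\mathcal{O}_{S,s}$ vanishing on the degeneration divisor; then $\det A = (\prod u_i)\, f$ up to a unit, so the degeneration divisor is $\{f=0\}$, and it is smooth over $k$ iff $df$ is nonvanishing, i.e. iff $f$ is part of a regular system of parameters. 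A Jacobian-criterion computation on $\mathcal{Q}=\{q=0\}$ at the point $(0:\dots:0:1)$ of the singular fiber shows the partials in the fiber directions all vanish there while the partials in the $S$-directions are exactly the components of $df$; hence $\mathcal{Q}$ is smooth at that point iff $df\ne 0$ there. Conversely, if the degeneration is not simple, Step 1 already shows $\mathcal{Q}$ is singular, so the biconditional is consistent. Combining, $\{\text{degeneration divisor smooth}\}\Leftrightarrow\{\mathcal{Q}\text{ smooth}\}\wedge\{\text{simple degeneration}\}$, and by Step 1 the right-hand side is equivalent to just ``$\mathcal{Q}$ smooth,'' which matches the asserted equivalence.

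For the reduction to the local picture I would use that $S$ is smooth (so its local rings are regular, and ``part of a regular system of parameters'' is the right notion of smoothness for the divisor), and that flatness of $\pi$ guarantees $\mathcal{Q}$ has no embedded or fiber components, so that ``$\mathcal{Q}$ smooth over $k$'' is detected fiberwise by the Jacobian criterion. I would also invoke that diagonalization of a quadratic form with a nonzero entry is available over the local ring after a $\mathrm{GL}_n$ change of coordinates once some diagonal (or, after a linear substitution, some diagonal) entry is a unit — and near a point of the degeneration divisor the generic smoothness forces the rank to be exactly $n-1$ there, so at most one diagonal entry can fail to be a unit.

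The main obstacle I expect is \emph{Step 2}: carefully matching the vanishing order of $\det A$ along the degeneration divisor with the rank of the Hessian in the fiber directions, so as to see that ``simple degeneration'' is exactly what makes $\det A$ vanish to order $1$ transversally (rather than vanishing to higher order or along a singular locus), and then running the Jacobian criterion on $\mathcal{Q}$ at the cone point to identify its singularities with the singularities of the degeneration divisor. One must be a little careful that the equivalence is clean in both directions — in particular that a non-reduced or singular degeneration divisor really does force $\mathcal{Q}$ to be singular, which is where simple degeneration (Step 1) is used to rule out the ``extra'' source of singularity coming from high-corank fibers. Since this is a known result (\cite[Proposition 1.2.5]{ABB14}, restated here), I would keep the argument to the essential local computation and cite the references in Section~\ref{subsection:hyperbolic-reduction} for the background on quadratic forms over rings.
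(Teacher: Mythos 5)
Your Step 2 is essentially the right local computation at a corank-one point, but Step 1 is false as stated, and the proof collapses because of it. Whether \(\cal Q\) is singular at a point \(x\) of a degenerate fiber over \(s\) is \emph{not} determined by the corank alone: by the Jacobian criterion, \((s,x)\) is a singular point of \(\cal Q\) iff \(x\) lies in the kernel of the fiber form \emph{and} the base-direction derivatives \(d_s q(x)\) vanish, so neither implication in your claimed biconditional ``\(\cal Q\) smooth \(\Leftrightarrow\) simple degeneration'' holds. For instance, \(q=x_1^2+x_2^2+t^2x_3^2\) over \(\bb A^1\) has simple degeneration but singular total space (and non-reduced, hence non-smooth, degeneration divisor \(\{t^2=0\}\)); and \(q=x_1^2+sx_2^2+tx_3^2\) over \(\bb A^2\) has smooth total space but a corank-\(2\) fiber at the origin (with degeneration divisor \(\{st=0\}\), not smooth). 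The second example also kills your concluding step, where you assert the right-hand side of the lemma is equivalent to just ``\(\cal Q\) smooth'': that is only true when \(\dim S=1\), while the lemma (and your proof) is stated for arbitrary smooth \(S\). The parenthetical claim in Step 2 that smoothness of the generic fiber forces rank exactly \(n-1\) at points of the degeneration divisor is likewise unjustified and false.

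The missing ingredient for the direction ``degeneration divisor smooth \(\Rightarrow\) simple degeneration'' is a statement about the discriminant, not about singularities of \(\cal Q\): if the fiber at \(s\) has corank \(c\geq 2\), split off over \(\cal O_{S,s}\) a unimodular orthogonal summand of rank \(n-c\) (possible since \(2\) is invertible); the complementary \(c\times c\) block has all entries in \(\mf m_s\), so \(\det A\) lies in \(\mf m_s^{c}\subseteq\mf m_s^2\), and the degeneration divisor (with its discriminant scheme structure) cannot be smooth at \(s\). With that fact in hand, your Step 2 computation at corank-one points (\(q=u_1x_1^2+\dots+u_{n-1}x_{n-1}^2+fx_n^2\), degeneration divisor \(\{f=0\}\), and \(\cal Q\) smooth at the cone point iff \(df\neq 0\)) does yield both directions of the lemma; delete Step 1 and the final ``collapse'' claim. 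Note also that the paper itself does not reprove this statement but simply cites \cite[Proposition 1.2.5]{ABB14}, whose proof is exactly this two-part local argument.
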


A subbundle \(\cal F\subset \cal E\) is \defi{isotropic} if \(q|_{\cal F}=0\) (or, equivalently, if \(\bb P_S(\cal F)\subset\cal Q\)). An isotropic subbundle \(\cal F\subset\cal E\) is \defi{regular} if, for every (closed) point \(s\in S\), the fiber \(\bb P_S(\cal F)_s\) is contained in the smooth locus of the fiber \(\cal Q_s\).
If \(\cal F\) is regular isotropic, then \(\cal F\) is contained in the subbundle \[\cal F^\perp\coloneqq\Ker(\cal E\twoheadrightarrow\cal F^\vee\otimes\cal L^\vee)\] of \(\cal E\). Moreover, \(\cal F\) is in the kernel of the restriction \(q|_{\cal F^\perp}\), so we have an induced quadratic form on \(\cal F^\perp/\cal F\).

\begin{defn}
    The induced quadratic form \(\overline{q}\colon\Sym^2(\cal F^\perp/\cal F)\to\cal L^{\vee}\) is the \defi{hyperbolic reduction} of \(q\colon\Sym^2 \cal E\to\cal L^\vee\) with respect to the regular isotropic subbundle \(\cal F\). We also say that \(\overline{\cal Q}\coloneqq(\overline{q}=0)\) is the \defi{hyperbolic reduction} of \(\cal Q=(q=0)\) with respect to \(\bb P_S(\cal F)\).
\end{defn}

The process of hyperbolic reduction along a regular isotropic subbundle preserves the degeneration divisor of a quadric fibration:

\begin{lem}[{\cite[Corollary 1.3.9]{ABB14}}]\label{lem:degeneration-divisor-preserved}
Let $S$ be a smooth scheme over a field of characteristic $\neq 2$, and $\pi\colon \mathcal{Q}\rightarrow S$ be a quadric fibration with smooth generic fiber.
Let $\cal F\subset \cal E$ be a regular isotropic subbundle, and $\overline{\pi}\colon \overline{\cal Q}\rightarrow S$ be the hyperbolic reduction of $\mathcal{Q}$ respect to $\bb P_S(\cal F)$.
Then $\pi$ and $\overline{\pi}$ have the same degeneration divisor.
\end{lem}

Note that, more generally, hyperbolic reduction preserves the locus of corank \(\geq i\) fibers for each \(i\) \cite[Lemma 2.4]{KuznetsovShinder18}.

Hyperbolic reduction can be described geometrically in terms of the linear projection of \(\cal Q\subset\bb P_S(\cal E)\) from the linear subbundle \(\bb P_S(\cal F)\subset\cal Q\subset\bb P_S(\cal E)\) \cite[Proposition 2.5]{KuznetsovShinder18}.

\subsection{Lemmas of Reid on pencils of quadrics}

Next, over \emph{algebraically closed} fields, we recall several results proven by Reid \cite{Reid-thesis} that we will use in the proof of Theorem~\ref{thm:symmetric}.
In what follows, \defi{coordinate points} $p_0, \cdots, p_N\in \P^N$ mean a choice of coordinates for $\P^N$ such that the $N+1$ points given by intersecting coordinate hyperplanes are exactly $p_0,\cdots, p_N$.

\begin{lem}[{\cite[Lemma 2.2]{Reid-thesis}}]\label{lem:reid_1}
Over an algebraically closed field of characteristic $\neq 2$, let $X$ be a smooth complete intersection of two quadrics in $\P^N$, and let $\ell$ be an $r$-plane on $X$.
Then there exist coordinate points $p_0,\cdots, p_N\in \P^N$ such that $\langle p_0,\dots, p_r\rangle =\ell$ and $X$ is defined by two quadrics which correspond in these coordinates to symmetric matrices of the form
\[
\begin{pmatrix}
0 & I_{r+1} & 0\\
I_{r+1} & 0 & 0\\
0 & 0 & I_{N-2r-1}
\end{pmatrix}, \quad
\begin{pmatrix}
0 & M & *\\
M & 0 & *\\
* & * & *
\end{pmatrix},
\]
where $M$ is a diagonal \((r+1)\times(r+1)\) matrix with distinct diagonal entries.
\end{lem}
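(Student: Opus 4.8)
The plan is to prove Lemma~\ref{lem:reid_1} by a sequence of linear changes of coordinates that progressively normalize the pencil of quadrics. Write the pencil as $\lambda A + \mu B$ for symmetric $(N+1)\times(N+1)$ matrices $A, B$, and let $\ell = \langle p_0, \dots, p_r\rangle$ be the given $r$-plane on $X$, so $\ell$ is isotropic for every member of the pencil. First I would choose coordinates so that the first $r+1$ basis vectors span $\ell$; then the upper-left $(r+1)\times(r+1)$ block of both $A$ and $B$ vanishes, since $\ell$ being contained in $X$ (equivalently in each quadric of the pencil) forces $q|_\ell = 0$ for all $q$ in the pencil.

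\textbf{Normalizing the first quadric.} Next I would bring $A$ (a smooth quadric, since $X$ is a smooth complete intersection the generic member of the pencil is smooth; one can arrange $A$ to be one such) into the hyperbolic form $\left(\begin{smallmatrix} 0 & I & 0\\ I & 0 & 0\\ 0 & 0 & I\end{smallmatrix}\right)$. The key input is that $\ell$ is a maximal-dimensional-enough isotropic subspace for the nondegenerate form $A$: since $A$ is nondegenerate with $\ell$ isotropic, one can extend $\ell$ to a hyperbolic basis, choosing complementary vectors $p_{r+1}, \dots, p_{2r+1}$ pairing with $p_0, \dots, p_r$ and an orthogonal complement on which $A$ restricts to a nondegenerate form, which over an algebraically closed field can be diagonalized to the identity $I$. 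This uses only standard bilinear-form theory over a field where every element is a square.

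\textbf{Using residual freedom to normalize $B$.} The subtle part, and what I expect to be the main obstacle, is the final normalization of $B$ into the shape $\left(\begin{smallmatrix} 0 & M & *\\ M & 0 & *\\ * & * & *\end{smallmatrix}\right)$ with $M$ diagonal with \emph{distinct} entries, \emph{while preserving} the normal form already achieved for $A$. The upper-left block of $B$ is already $0$. One must use the stabilizer of $A$ in $\mathrm{GL}_{N+1}$ — the orthogonal group $O(A)$ — which acts on the remaining data; the Levi subgroup $\mathrm{GL}_{r+1} \times O(\text{middle block})$ acting by $g \cdot B$ lets us conjugate the $(1,2)$-block of $B$. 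Since the pencil $\lambda A + \mu B$ is regular (smooth complete intersection implies the binary form $\det(\lambda A + \mu B)$ has distinct roots, by the classical correspondence between smoothness of $X$ and nondegeneracy of the pencil), the generalized eigenvalue problem $B v = t A v$ restricted appropriately is nondegenerate, and simultaneous normalization forces the block $M$ to be simultaneously diagonalizable with distinct eigenvalues. Concretely I would argue that the $(r+1)\times(r+1)$ matrix governing how $B$ pairs $\ell$ with its $A$-dual space can be diagonalized by the $\mathrm{GL}_{r+1}$-action, and distinctness of the diagonal entries of $M$ follows from the distinctness of the $2g+2$ (or $N+1$) roots of $\det(\lambda A + \mu B)$ — repeated entries in $M$ would produce a repeated root. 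I would organize this as: (i) reduce to the $(1,2)$-block by the Levi action, (ii) show it can be made symmetric after adjusting by lower blocks, (iii) diagonalize it, (iv) deduce distinctness from regularity of the pencil. The remaining starred blocks are left arbitrary, as the statement allows.
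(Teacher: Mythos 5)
There is a genuine gap, and it sits exactly where you flagged the ``main obstacle'' --- your plan does not get past it. You have misread the target normal form: in the second matrix the $(2,2)$ block is $0$, not a $*$ (only the blocks involving the last $N-2r-1$ coordinates are unconstrained). So the asserted coordinates exhibit a second $(r+1)$-dimensional subspace $\langle p_{r+1},\dots,p_{2r+1}\rangle$ that is isotropic for \emph{both} quadrics, i.e.\ a second $r$-plane $m\subset X$, disjoint from $\ell$ and paired nondegenerately with $\ell$ by the first quadric. Producing such an $m$ is the real content of the lemma. Your construction only extends $\ell$ to a hyperbolic basis for the single form $A$; the resulting complement is $A$-isotropic but there is no reason it is $B$-isotropic, and acting by the Levi $\mathrm{GL}_{r+1}\times O(N-2r-1)$ of the stabilizer of $(A,\ell)$ never touches the $(2,2)$ block of $B$. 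Making that block vanish is a nontrivial existence statement about linear spaces on $X$ (a system of quadratic conditions on the choice of complement), not a consequence of the orbit structure of $O(A)$; compare Lemma~\ref{lem:reid-3}, where the disjoint plane $m$ is part of the \emph{hypothesis} rather than something to be found.

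This omission also breaks your step (iv). The divisibility $\det(\lambda A_{12}+B_{12})\mid\det(\lambda A+B)$ (Reid's Corollary 3.7, which the paper invokes in its proof of Lemma~\ref{lem:reid-3}) is valid when the $(1,1)$ \emph{and} $(2,2)$ blocks of both forms vanish, i.e.\ when both $\ell$ and the chosen complement lie on $X$; with a nonzero $(2,2)$ block of $B$ it already fails for $1\times 1$ blocks. So in your setup the eigenvalues of $B_{12}$ (relative to $A_{12}$) need not be roots of $\det(\lambda A+B)$, need not be distinct, and $B_{12}$ need not even be diagonalizable by the conjugation action you describe --- your appeal to regularity of the pencil proves nothing about $M$. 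A correct argument runs in the opposite order: first prove that $X$ contains an $r$-plane $m$ with $m\cap\ell=\emptyset$ and $m$ meeting $\ell^{\perp_A}$ trivially (this is the geometric heart, and is what Reid establishes); then restrict the pencil to $\langle\ell,m\rangle\cong\P^{2r+1}$, where both diagonal blocks vanish, use smoothness of $X$ plus the divisibility to get separability of $\det(\lambda A_{12}+B_{12})$, apply Lemma~\ref{lem:elementary} to obtain $I$ and $M$ simultaneously, and finally normalize $A$ on the $A$-orthogonal complement of $\langle\ell,m\rangle$ to produce the third block. Note also that the paper itself gives no proof of Lemma~\ref{lem:reid_1} --- it cites Reid's thesis --- so the only in-paper model for the argument is the proof of Lemma~\ref{lem:reid-3}, whose structure makes clear that the second plane must be supplied before any of the matrix normalization can begin.
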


\begin{lem}[{\cite[Lemma 3.4]{Reid-thesis}}]\label{lem:reid_2}
Over an algebraically closed field of characteristic $\neq 2$, let $\ell, m$ be disjoint $r$-planes in $\P^{2r+1}$
and choose coordinate points $p_0,\dots, p_r, q_0,\dots, q_r\in \P^{2r+1}$ such that $\langle p_0,\dots,p_r\rangle=\ell$ and $\langle q_0,\dots, q_r\rangle =m$.
Furthermore, let $Q_1, Q_2$ be quadrics in $\P^{2r+1}$ which correspond in these coordinates to symmetric matrices of the form
\[
\begin{pmatrix}
    0 & I_{r+1}\\
    I_{r+1} & 0
\end{pmatrix}, \quad
\begin{pmatrix}
0 & M\\
M & 0
\end{pmatrix},
\]
where $M$ is a diagonal \((r+1)\times(r+1)\) matrix with distinct diagonal entries.
Then the set of $r$-planes on the singular complete intersection of quadrics $Y \coloneqq Q_1\cap Q_2$ coincides with
\[
\left\{\langle p_i, q_j\rangle_{i\in I, j\not\in I}\mid I\subset \left\{0,\dots, r\right\}
\right\}.
\]
\end{lem}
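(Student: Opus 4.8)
The plan is to rephrase the statement as a question about subspaces of a quadratic space and settle it via an eigenspace decomposition. Write $\P^{2r+1}=\P(V)$ with $V$ a $(2r+2)$-dimensional space with basis $e_0,\dots,e_r,f_0,\dots,f_r$, so that $p_i=[e_i]$ and $q_j=[f_j]$, and let $B_1,B_2$ be the symmetric bilinear forms attached to $Q_1,Q_2$; in this basis $B_1(e_i,f_j)=\delta_{ij}$ and $B_2(e_i,f_j)=m_i\delta_{ij}$, with all other pairings among basis vectors zero. An $r$-plane in $\P^{2r+1}$ is $\P(L)$ for an $(r+1)$-dimensional subspace $L\subseteq V$, and since $\chara k\neq 2$, it lies on $Y=Q_1\cap Q_2$ if and only if $L$ is isotropic for both $B_1$ and $B_2$. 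For the ``easy'' inclusion I would note that for each $I\subseteq\{0,\dots,r\}$ the subspace $L_I\coloneqq\langle e_i : i\in I\rangle\oplus\langle f_j : j\notin I\rangle$ is isotropic for both forms, because in each monomial $x_iy_i$ appearing in $Q_1$ or $Q_2$ one of the two coordinates vanishes on $\P(L_I)$; since the $2^{r+1}$ subspaces $L_I$ are pairwise distinct, this produces all the claimed $r$-planes $\langle p_i,q_j\rangle_{i\in I,\,j\notin I}$ on $Y$.

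For the reverse inclusion, introduce the $B_1$-self-adjoint endomorphism $\phi\coloneqq B_1^{-1}B_2$ of $V$, characterized by $B_2(u,v)=B_1(\phi u,v)$. In the chosen basis $\phi$ is diagonal with $\phi(e_i)=m_ie_i$ and $\phi(f_i)=m_if_i$; since the $m_i$ are \emph{distinct}, $\phi$ is diagonalizable with eigenspace $V_i\coloneqq\langle e_i,f_i\rangle$ for the eigenvalue $m_i$, and these planes are mutually orthogonal for $B_1$ (a standard consequence of self-adjointness). Each $V_i$ is a hyperbolic plane for $B_1$, so its only $B_1$-isotropic subspaces are $0$, $\langle e_i\rangle$, and $\langle f_i\rangle$. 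This decomposition is the heart of the argument, and it is the single place where the hypothesis that $M$ has distinct diagonal entries enters; dropping it enlarges the eigenspaces and produces further $r$-planes on $Y$.

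Then, given any $r$-plane $\P(L)$ on $Y$, I would argue as follows: $L$ has dimension $r+1=\tfrac12\dim V$ and is isotropic for the nondegenerate form $B_1$, hence is a maximal $B_1$-isotropic subspace, so $L=L^{\perp}$ (orthogonal complement for $B_1$). Isotropy of $L$ for $B_2$ says $B_1(\phi u,v)=0$ for all $u,v\in L$, i.e.\ $\phi L\subseteq L^{\perp}=L$, so $L$ is $\phi$-invariant. As $\phi$ is diagonalizable, $L=\bigoplus_{i=0}^r(L\cap V_i)$, and each $L\cap V_i$ is a $B_1$-isotropic subspace of the hyperbolic plane $V_i$, hence of dimension at most $1$. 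Comparing dimensions forces $\dim(L\cap V_i)=1$ for every $i$, so $L\cap V_i$ equals $\langle e_i\rangle$ or $\langle f_i\rangle$; taking $I\coloneqq\{i : L\cap V_i=\langle e_i\rangle\}$ gives $L=L_I$, which is exactly the desired conclusion.

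I expect the only genuine obstacle to be spotting the right mechanism — the operator $\phi=B_1^{-1}B_2$ together with the identity $L^{\perp}=L$, which converts ``$L$ isotropic for the whole pencil'' into ``$L$ is $\phi$-invariant.'' Once that is in place the classification is entirely forced. The remaining points — that $\phi$ is well defined (because $B_1$ is nondegenerate here), that the $L_I$ are pairwise distinct, and that the monomial observation also shows each $L_I$ is $B_2$-isotropic — are routine and I would dispatch them quickly.
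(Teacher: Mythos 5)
Your proof is correct, and each step checks out: the matrix of $\phi=B_1^{-1}B_2$ in the given basis is indeed $\mathrm{diag}(M,M)$, the distinctness of the $m_i$ makes the planes $V_i=\langle e_i,f_i\rangle$ the eigenspaces, maximal $B_1$-isotropy gives $L=L^\perp$, $B_2$-isotropy then gives $\phi L\subseteq L$, and the dimension count over the hyperbolic planes $V_i$ forces $L=L_I$. Note, however, that the paper offers no proof of this lemma at all — it is quoted directly from Reid's thesis (Lemma 3.4 there) — so there is no in-paper argument to compare against; your write-up is a clean, self-contained replacement for the citation. It is essentially the standard simultaneous-diagonalization analysis of a nondegenerate pencil, recast in the coordinate-free language of the self-adjoint operator $B_1^{-1}B_2$, and as a bonus it never uses algebraic closedness: it works over any field of characteristic $\neq 2$ once the diagonal entries of $M$ are distinct in that field, which is slightly more general than the statement as posed.
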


\begin{lem}\label{lem:reid-3}
Over an algebraically closed field of characteristic $\neq 2$, 
fix $g\geq 1$, and let $X$ be a smooth complete intersection of two quadrics in $\P^{2g}$.
Let $\ell, m$ be disjoint $(g-1)$-planes on $X$.
Then there exist coordinate points $p_0,\cdots, p_{2g}\in \P^{2g}$ such that $\langle p_0,\dots, p_{g-1}\rangle =\ell$, $\langle p_{g},\dots, p_{2g-1}\rangle=m$, and $X$ is defined by two quadrics which correspond in these coordinates to symmetric matrices of the form
\[
\begin{pmatrix}
0 & I_g & *\\
I_g & 0 & *\\
* & * & *
\end{pmatrix}, \quad
\begin{pmatrix}
0 & M & *\\
M & 0 & *\\
* & * & *
\end{pmatrix},
\]
where $M$ is a diagonal \(g\times g\) matrix with distinct diagonal entries.
\end{lem}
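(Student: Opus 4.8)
The plan is to reduce Lemma~\ref{lem:reid-3} to Lemma~\ref{lem:reid_1} (Reid's Lemma 2.2) applied to the $(g-1)$-plane $\ell$, and then exploit the presence of the second disjoint plane $m$ to pin down the shape of the matrices on the remaining coordinates. First I would invoke Lemma~\ref{lem:reid_1} with $r = g-1$: there exist coordinate points $p_0,\dots,p_{2g}$ with $\langle p_0,\dots,p_{g-1}\rangle = \ell$ such that the two quadrics cutting out $X$ have Gram matrices
\[
A = \begin{pmatrix} 0 & I & 0\\ I & 0 & 0\\ 0 & 0 & I\end{pmatrix}, \qquad
B = \begin{pmatrix} 0 & M & *\\ M & 0 & *\\ * & * & *\end{pmatrix},
\]
where the blocks have sizes $g, g, 1$ (since $N = 2g$ means there is exactly one coordinate outside the $2g$-dimensional span of $\ell$ and its ``complementary'' $g$-plane), and $M$ is diagonal with distinct entries. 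Note the bottom-right entry of $A$ is the $1\times 1$ identity, i.e. it is nonzero; this is the key nondegeneracy we will use.

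Next I would change coordinates to move $m$ into the desired position. A priori the $(g-1)$-plane $m$ sits somewhere in $\P^{2g}$ disjoint from $\ell$. The subtlety is that the ``middle'' $g$-dimensional coordinate subspace $\langle p_g,\dots,p_{2g-1}\rangle$ produced by Lemma~\ref{lem:reid_1} need not contain $m$. However, I can act by the subgroup of $\mathrm{PGL}$ that preserves both Gram matrices $A$ and $B$ in their current form: because $A$ has the hyperbolic-plus-one-dimensional shape and $B$ has the compatible block structure, there is a torus and unipotent part of this stabilizer that fixes $\ell = \langle p_0,\dots,p_{g-1}\rangle$ pointwise and acts transitively enough on the $(g-1)$-planes isotropic for both forms and disjoint from $\ell$. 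Concretely, any such plane $m$ is the graph of a linear map and, after using the isotropy conditions $q|_m = 0$ for both $A$ and $B$, one shows $m$ can be brought to $\langle p_g,\dots,p_{2g-1}\rangle$ by a transformation of the block form $\mathrm{diag}(P, {}^tP^{-1}, 1)$ composed with a unipotent element — and crucially, because $M$ has distinct diagonal entries, the only way the $B$-isotropy is preserved forces this $P$ to be diagonal, which keeps $M$ diagonal with distinct entries and keeps $A$ in the displayed shape. After this change of coordinates, $\langle p_0,\dots,p_{g-1}\rangle = \ell$ and $\langle p_g,\dots,p_{2g-1}\rangle = m$.

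Finally I would read off the matrix shapes in the new coordinates. Writing $A'$ and $B'$ in the $g,g,1$ block decomposition: isotropy of $\ell$ forces the top-left $g\times g$ block of both $A'$ and $B'$ to vanish, and isotropy of $m$ forces the middle $g\times g$ block of both to vanish. So $A'$ and $B'$ already have the form
\[
\begin{pmatrix} 0 & * & *\\ * & 0 & *\\ * & * & *\end{pmatrix}
\]
claimed in the statement. It remains to normalize the off-diagonal $g\times g$ block: for $A'$, since $X$ is smooth the restriction of $A'$ to $\langle \ell, m\rangle$ must be nondegenerate, forcing the $(1,2)$ block to be invertible, and then a further diagonal change of basis (of the $m$-coordinates) turns it into $I$ while preserving all the established zero blocks and keeping $B'$'s analogous block diagonal. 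That diagonal block of $B'$ is then forced to be $M$ with distinct entries, by the same argument as in Reid's Lemma~2.2 (it is the ``eigenvalue'' data of the pencil restricted to $\langle\ell, m\rangle$, which is unchanged and has distinct roots by smoothness of $X$). The main obstacle I anticipate is the middle step: verifying that one can simultaneously move $m$ to the standard position \emph{and} retain the normalized forms of $A$ and $B$ with $M$ still diagonal with distinct entries — this requires carefully analyzing the stabilizer of the pair $(A,B)$ and using the distinct-eigenvalue condition to rule out non-diagonal base changes. Once that is handled, the rest is bookkeeping with block matrices and the smoothness hypothesis.
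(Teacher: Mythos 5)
Your overall strategy (normalize with respect to $\ell$ via Lemma~\ref{lem:reid_1}, then move $m$ into coordinate position by a symmetry) differs from the paper's proof, and the step you yourself flag as the main obstacle is a genuine gap. The group you invoke does not exist: for a smooth pencil the simultaneous stabilizer of the two quadrics has no ``torus and unipotent part'' --- any $g$ with $g^TAg=A$ and $g^TBg=B$ commutes with $A^{-1}B$, which has $2g+1$ distinct eigenvalues by smoothness, so the stabilizer is the finite group of sign changes in the simultaneously diagonalizing basis. In particular it cannot act transitively on the $\binom{2g+1}{g}$ planes of $X$ disjoint from $\ell$ (for $g=2$, $10\nmid 2^{5}$), so you cannot carry an arbitrary $m$ to the middle coordinate plane while literally preserving both Gram matrices. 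If instead you only preserve the first form $q_A$ --- which is possible, e.g.\ by Witt's extension theorem one can fix $\ell$ pointwise and carry $m$ to the middle plane --- then $B$ gets replaced by $\sigma^TB\sigma$, and there is no reason its off-diagonal $g\times g$ block is still diagonal, let alone with distinct entries; your final paragraph silently assumes this. Likewise, the claim that ``since $X$ is smooth the restriction of $A'$ to $\langle\ell,m\rangle$ must be nondegenerate'' is unjustified: smoothness controls the discriminant of the full pencil, not the restriction of one chosen member to one chosen linear subspace.

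What is really needed, and what your proposal asserts rather than proves (``distinct roots by smoothness of $X$,'' ``the same argument as in Reid's Lemma 2.2''), is that the pencil restricted to $\langle\ell,m\rangle$ has $g$ distinct singular members, equivalently that $\det(\lambda A+B)$ formed from the off-diagonal blocks has $g$ distinct roots. The paper's proof goes directly at this: it places $\ell$ and $m$ in coordinate position from the start (the zero diagonal blocks are then automatic from isotropy), chooses generators so that the full matrix $S$ is invertible, and invokes Reid's divisibility result \cite[Corollary 3.7]{Reid-thesis} that $\det(\lambda A+B)$ divides $\det(\lambda S+T)$, the latter having $2g+1$ distinct roots by smoothness \cite[Proposition 2.1]{Reid-thesis}; Lemma~\ref{lem:elementary} then normalizes the off-diagonal blocks to $(I,M)$ simultaneously. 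Without an argument of this kind (or some substitute for the divisibility step), your reduction to Lemma~\ref{lem:reid_1} does not close, so the proof as written is incomplete at exactly its load-bearing point.
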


For the proof, we need the following result stated in \cite[page 44]{Reid-thesis}:

\begin{lem}\label{lem:elementary}
    Over an algebraically closed field of characteristic $\neq 2$, let $A, B$ be $n\times n$ matrices.
    Then the following conditions are equivalent:
    \begin{enumerate}
    \item\label{item:change-of-basis} there exists an invertible $2n\times 2n$ matrix $L$ of the form
    \[
    L = \begin{pmatrix}
    L_1 & 0\\
    0 & L_2
    \end{pmatrix}
    \]
    for some $n\times n$ matrices $L_1,L_2$,
    and there exists a diagonal $n\times n$ matrix $M$ with distinct diagonal entries such that
    \[
    L^T
    \begin{pmatrix}
    0 & A \\
    A^T & 0 \\ 
    \end{pmatrix}L =
    \begin{pmatrix}
    0 & I_n \\
    I_n & 0 \\ 
    \end{pmatrix}, \quad
        L^T
    \begin{pmatrix}
    0 & B \\
    B^T & 0 \\ 
    \end{pmatrix}L =
    \begin{pmatrix}
    0 & M \\
    M & 0 \\ 
    \end{pmatrix};
    \]
    \item\label{item:polynomial-separable} the polynomial $\det(\lambda A+ B)$ has $n$ distinct roots.
    \end{enumerate}
\end{lem}
\begin{proof}
\eqref{item:change-of-basis}$\Rightarrow$\eqref{item:polynomial-separable}
is immediate. 
As for \eqref{item:polynomial-separable}$\Rightarrow$\eqref{item:change-of-basis},
the assumption implies that $A$ is invertible and $BA^{-1}$ has $n$ distinct eigenvalues.
Let $C$ be an invertible $n\times n$ matrix such that $C^{-1}BA^{-1}C$ equals some diagonal $n\times n$ matrix $M$ with distinct diagonal entries.
Then we may take 
\[L \coloneqq \begin{pmatrix}(C^T)^{-1} & 0\\
0& A^{-1}C\end{pmatrix}.\]
\end{proof}

\begin{proof}[Proof of Lemma \ref{lem:reid-3}]
The proof is outlined in
\cite[page 44]{Reid-thesis}.
Here is a detailed argument.
Take coordinate points $p_0,\cdots, p_{2g}\in \P^{2g}$ such that $\langle p_0,\dots,p_{g-1}\rangle =\ell$ and $\langle p_g,\dots,p_{2g-1}\rangle =m$.
In these coordinates, any two quadrics defining $X$ correspond to symmetric matrices of the form
\[
S=
\begin{pmatrix}
0 & A & *\\
A^T & 0 & *\\
* & * & *
\end{pmatrix}, \quad
T=\begin{pmatrix}
0 & B & *\\
B^T & 0 & *\\
* & * & *
\end{pmatrix},
\]
where $A, B$ are $g\times g$ matrices,
and, by the smoothness of $X$, we may choose those quadrics so that $S$ is invertible.
By \cite[Corollary 3.7]{Reid-thesis}, the polynomial $\det(\lambda A+B)$ divides the polynomial $\det(\lambda S+ T)$,
where the latter has distinct $2g+1$ roots by the smoothness of $X$ \cite[Proposition 2.1]{Reid-thesis}.
Hence $\det(\lambda A+ B)$ has distinct $g$ roots, which implies that after a suitable coordinate change
we may take $A=I_g$ and $B$ to be diagonal with distinct diagonal entries by Lemma \ref{lem:elementary}.
Accordingly, we obtain coordinate points with the desired properties. 
\end{proof}

\subsection{$\bCH^2$-scheme of Benoist--Wittenberg}\label{sec:CH2-scheme} Throughout this section, let \(X\) be a smooth, proper, geometrically connected, \emph{geometrically rational} threefold over \(k\), and let $G_k \coloneqq \Gal(\overline{k}/k_p)$ be the absolute Galois group of the perfect closure $k_p$ of $k$. We recall several key properties of Benoist--Wittenberg's codimension 2 Chow scheme of \(X\) \cite{BW-IJ}, which we will use in the proof of Theorem~\ref{maximallinearspace}.

For such a threefold \(X\), Benoist and Wittenberg use K-theory to define a functor \(\CH^2_{X/k,\fppf}\) for codimension 2 cycles that is analogous to the Picard functor \(\Pic_{X/k,\fppf}\). They show that this functor is represented by a smooth group scheme \(\bCH^2_{X/k}\) over \(k\) with the following properties \cite[Theorem 3.1]{BW-IJ}:
\begin{enumerate}
    \item The identity component \((\bCH^2_{X/k})^0\) is an abelian variety, which we refer to as the \defi{intermediate Jacobian} of \(X\). Its base change to the perfect closure of \(k\) agrees with the intermediate Jacobian as defined by Murre \cite{Murre85,ACMV-descend}.
    \item There is a \(G_k\)-equivariant isomorphism \(\CH^2(X_{\kbar})\cong\bCH^2_{X/k}(\kbar)\).
    \item The component group \(\bCH^2_{X/k}/(\bCH^2_{X/k})^0\) is identified with the \(G_k\)-module \(\NS^2(X_\kbar)\).
    \item\label{item:classic-IJ-obstruction} If \(X\) is \(k\)-rational, then there is a smooth projective (not necessarily connected) curve \(B\) over \(k\) such that \(\bCH^2_{X/k}\) is a principally polarized direct factor of \(\bPic_{B/k}\).
\end{enumerate}
For each class \(\gamma\in\NS^2(X_{\kbar})^{G_k}=(\bCH^2_{X/k}/(\bCH^2_{X/k})^0)(k)\), its inverse image \((\bCH^2_{X/k})^\gamma\) in \(\bCH^2_{X/k}\) is a(n \'etale) \((\bCH^2_{X/k})^0\)-torsor. The quotient map is a group homomorphism, so we have an equality of \((\bCH^2_{X/k})^0\)-torsors \[[(\bCH^2_{X/k})^\gamma] + [(\bCH^2_{X/k})^{\gamma'}] = [(\bCH^2_{X/k})^{\gamma+\gamma'}]\] for any \(\gamma,\gamma'\in\NS^2(X_{\kbar})^{G_k}\).

Hassett--Tschinkel, over \(k=\bb R\) \cite{HT-intersection-quadrics}, and Benoist--Wittenberg, over arbitary fields \cite{BW-IJ}, (see also \cite{HT-cycle} for \(k\subset\bb C\)) observed that these intermediate Jacobian torsors can be used to refine the rationality obstruction~\eqref{item:classic-IJ-obstruction}. We call their refined obstruction the \defi{intermediate Jacobian torsor obstruction} to rationality. For simplicity, we state a special case of this obstruction, which is enough for our application:

\begin{thm}[{Special case of \cite[Theorem 3.11]{BW-IJ}}]\label{thm-BWIJ}
    For \(X\) as above, assume that there exists an isomorphism \((\bCH^2_{X/k})^0\cong\bPic^0_{C/k}\) of principally polarized abelian varieties for some smooth, projective, geometrically connected curve \(C\) of genus \(\geq 2\). If \(X\) is \(k\)-rational, then for every \(\gamma\in(\NS^2 X_{\kbar})^{G_k}\) there exists an integer \(d\) such that \((\bCH^2_{X/k})^\gamma\) and \(\bPic^d_{C/k}\) are isomorphic as \(\bPic^0_{C/k}\)-torsors.
\end{thm}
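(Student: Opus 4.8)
The plan is to deduce this from the general intermediate Jacobian torsor obstruction of Benoist--Wittenberg. Recall that \cite[Theorem 3.11]{BW-IJ} asserts that if $X$ is $k$-rational, then there is a smooth projective (a priori disconnected) curve $B$ over $k$ such that $\bCH^2_{X/k}$ is a direct factor of $\bPic_{B/k}$, compatibly with the principal polarizations on the identity components and with the $\bPic^0$-torsor structure on every connected component. The first step is to invoke this to produce such a $B$; write $B_1,\dots,B_n$ for its geometrically connected components, so that on identity components one obtains an isomorphism of principally polarized abelian varieties $\prod_{i=1}^{n}\bPic^0_{B_i/k}\cong\bPic^0_{B/k}\cong(\bCH^2_{X/k})^0\times A$ for some complementary principally polarized abelian variety $A$.

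Next I would pin down which component of $B$ accounts for $(\bCH^2_{X/k})^0$. Each $\bPic^0_{B_i/k}$ is the Jacobian of a smooth geometrically connected curve, hence a geometrically indecomposable principally polarized abelian variety, since the theta divisor of such a curve is geometrically irreducible; and by hypothesis $(\bCH^2_{X/k})^0\cong\bPic^0_{C/k}$ is indecomposable for the same reason. By the uniqueness (Krull--Schmidt type) of the decomposition of a principally polarized abelian variety into indecomposable factors, $(\bCH^2_{X/k})^0$ is isomorphic as a polarized abelian variety to $\bPic^0_{B_{i_0}/k}$ for some index $i_0$, and that curve necessarily has genus $g(C)\geq 2$. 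The Torelli theorem over $k$ then produces an isomorphism of curves $C\cong B_{i_0}$ (up to the harmless sign ambiguity in the hyperelliptic case), and in particular compatible isomorphisms $\bPic^{d}_{C/k}\cong\bPic^{\pm d}_{B_{i_0}/k}$ of $\bPic^0$-torsors for every integer $d$.

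The last step is to trace the torsor structure through the decomposition. Fix $\gamma\in(\NS^2 X_\kbar)^{G_k}$. Under the direct factor statement, the component $(\bCH^2_{X/k})^\gamma$ maps to a connected component of $\bPic_{B/k}$, which we may write as $\prod_{i=1}^{n}\bPic^{d_i}_{B_i/k}$; since the decomposition is compatible with the polarizations and with the torsor structure on each component, the $(\bCH^2_{X/k})^0$-torsor $(\bCH^2_{X/k})^\gamma$ is carried onto the torsor factor of $\prod_i\bPic^{d_i}_{B_i/k}$ corresponding to $\bPic^0_{B_{i_0}/k}$, namely $\bPic^{d_{i_0}}_{B_{i_0}/k}$. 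Transporting through the Torelli identification with $C$ yields $(\bCH^2_{X/k})^\gamma\cong\bPic^{d}_{C/k}$ as torsors under $\bPic^0_{C/k}\cong(\bCH^2_{X/k})^0$ for a suitable integer $d$, as desired.

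The real content, and the main obstacle, is hidden in the clause ``direct factor compatible with polarizations and torsor structures'': one needs the full strength of \cite[Theorem 3.11]{BW-IJ}, which builds the intermediate Jacobian together with the $\bPic^0$-torsor structure on all of its components into a single object, and then one must combine it with the uniqueness of the principally polarized abelian variety decomposition into indecomposables and with the Torelli theorem over $k$, while checking that all identifications are compatible with the maps to the component groups $\NS^2(X_\kbar)^{G_k}$ and $\prod_i\Z$. The hypothesis $g(C)\geq 2$ is precisely what makes the indecomposability and Torelli inputs available and forces the matching component $B_{i_0}$ to have genus $\geq 2$ as well. (In characteristic $0$ one could alternatively bypass \cite[Theorem 3.11]{BW-IJ} and argue directly from the blow-up formula for $\bCH^2$ together with weak factorization of the birational map $\P^3\dashrightarrow X$, but the cited theorem is what makes the argument work in arbitrary characteristic.)
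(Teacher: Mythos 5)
The paper itself gives no argument for this statement: it is quoted verbatim as a special case of \cite[Theorem 3.11]{BW-IJ}, whose general form is the direct-factor statement recorded as property (4) in Section~\ref{sec:CH2-scheme}; the reduction from that general form to the special case stated here is exactly what Benoist--Wittenberg carry out in their own genus-\(2\) application. Your proposal reconstructs that reduction, and it is essentially correct: from an isomorphism \(\bCH^2_{X/k}\times N\cong\bPic_{B/k}\) of group schemes compatible with the principal polarizations, you match \((\bCH^2_{X/k})^0\cong\bPic^0_{C/k}\) with a factor \(\bPic^0_{B_{i_0}/k}\) via indecomposability and uniqueness of the decomposition of a principally polarized abelian variety, apply Torelli over \(k\), and trace the component/torsor structure. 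Three points should be made explicit to make this airtight. First, the connected components of \(B\) need not be geometrically connected; but since \(\bPic^0_{C/\kbar}\) is indecomposable over \(\kbar\) (irreducible theta divisor, \(\dim\geq 2\)), base change to \(\kbar\) rules out matching with a factor coming from a non-geometrically-connected component (a Weil-restriction-type factor decomposes over \(\kbar\)), so \(B_{i_0}\) is automatically geometrically connected of genus \(g(C)\). Second, for the torsor-tracing step you need the strong form of uniqueness --- the indecomposable factors are unique as principally polarized abelian \emph{subvarieties}, not merely up to isomorphism --- so that the isomorphism on identity components carries \((\bCH^2_{X/k})^0\) onto the factor \(\bPic^0_{B_{i_0}/k}\) and the complement onto the complementary factors; only then does comparing classes in \(H^1(k,-)\) componentwise give \([(\bCH^2_{X/k})^\gamma]=[\bPic^{d_{i_0}}_{B_{i_0}/k}]\). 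This strong uniqueness is available over arbitrary fields (Clemens--Griffiths over \(\bb C\), Benoist--Wittenberg in general), but it is the hinge of the argument and deserves a citation rather than the phrase ``Krull--Schmidt type.'' Third, genus \(\geq 1\) already gives indecomposability; the hypothesis genus \(\geq 2\) is what the Torelli step over \(k\) really uses (for genus \(1\) the polarized Jacobian does not determine the curve), and the \(\pm 1\)/automorphism ambiguity there is indeed harmless since the conclusion only asserts existence of some integer \(d\).
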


\section{Fano schemes of linear spaces and hyperbolic reductions of pencils of quadrics}\label{section-fano-hyperbolic}

In this section, we construct the hyperbolic reduction of a pencil of quadrics \(\cal Q \to \bb P^1\) with respect to a linear subspace in the base locus \(X\). We show several properties about the hyperbolic reductions, relating them to linear spaces on \(X\), and we prove Theorems~\ref{Fanoscheme}, \ref{thm:symmetric}, and~\ref{thm:fano-unirational}.

\subsection{Construction of $\phi^{(r)}\colon\mathcal{Q}^{(r)}\rightarrow\P^1$ and its properties}\label{sec:Q^(r)-construction}

Throughout Section~\ref{sec:Q^(r)-construction}, we will work in the following setting.
Fix an arbitrary field $k$ of characteristic $\neq 2$ and an integer $N\geq 2$. Let $X$ be a smooth complete intersection of two quadrics in $\P^N$, and let $\phi\colon \mathcal{Q}\rightarrow \P^1$ be the associated pencil of quadrics.
Let $0\leq r\leq \lfloor\frac{N}{2}\rfloor -1$.
By Lemma~\ref{lem:Kuznetsov_fanoscheme}, the Fano scheme $F_r(X)$ of $r$-planes on $X$ is non-empty. 
We will assume $F_r(X)(k)\neq \emptyset$ in what follows.

Choose
$\ell\in F_r(X)(k)$. The projection $\pi_{\ell}\colon\bb P^N \dashrightarrow \bb P^{N-r-1}$ away from $\ell$ induces diagrams
\[
\begin{tikzcd}[column sep=tiny]
&\P_{\P^{N-r-1}}(\O^{\oplus r+1}\oplus \O(1))\arrow[ld, "\bl_{\ell}"'] \arrow[rd] & \\
\P^{N} \arrow[rr, "\pi_{\ell}", dashed]& & \P^{N-r-1} ,
\end{tikzcd}
\;
\begin{tikzcd}
& \widetilde{X} \arrow[ld, "\bl_{\ell}"'] \arrow[rd, "f"] & \\
X \arrow[rr, "\pi_{\ell}|_X", dashed]& & \P^{N-r-1}           ,
\end{tikzcd}
\;
\begin{tikzcd}[column sep=scriptsize]
& \widetilde{\mathcal{Q}} \arrow[ld, "\bl_{\P^1\times \ell}"'] \arrow[rd, "h"] & \\
\mathcal{Q} \arrow[rr, "\id\times \pi_{\ell}|_{\mathcal{Q}}", dashed]& & \P^1\times \P^{N-r-1}           .
\end{tikzcd}
\]

To be more explicit, choose homogeneous coordinates $x_0,\dots, x_{N}$ for $\P^{N}$ so that
\[\ell=\left\{x_{r+1}=\dots= x_{N}=0\right\}.
\]
Then there exist forms $l_{ij}, q_i\in k[x_{r+1}, \dots, x_{N}]$ with $\deg l_{ij}=1, \deg q_i =2$ such that
\begin{align*}
X
=\left\{
\begin{pmatrix}
l_{00} & \dots &l_{0r} & q_0\\
l_{10} & \dots & l_{1r} & q_1
\end{pmatrix}
\begin{pmatrix}
x_0\\
\vdots\\
x_r\\
1
\end{pmatrix}
=0
\right\}
\subset \P^{N}.
\end{align*}
Choosing a section $z$ whose zero set equals the divisor $\P_{\P^{N-r-1}}(\O^{\oplus r+1})\subset \P_{\P^{N-r-1}}(\O^{\oplus r+1 }\oplus \O(1))$,
there exist homogeneous coordinates $y_{r+1},\dots, y_{N}$ for $\P^{N-r-1}$ such that $x_l = y_l z$,
and
\[
\widetilde{X}=
\left\{
\begin{pmatrix}
l_{00} & \dots &l_{0r} & q_0\\
l_{10} & \dots & l_{1r} & q_1
\end{pmatrix}
\begin{pmatrix}
x_0\\
\vdots\\
x_r\\
z
\end{pmatrix}
=0
\right\}\subset \P_{\P^{N-r-1}}(\O^{r+1}\oplus \O(1)),
\]
where $l_{ij}, q_i$ are in $y_{r+1},\dots, y_{N}$.
We also have
\begin{align*}
&\mathcal{Q}
=\left\{
\begin{pmatrix}
s & t
\end{pmatrix}
\begin{pmatrix}
l_{00} & \dots &l_{0r} & q_0\\
l_{10} & \dots & l_{1r} & q_1
\end{pmatrix}
\begin{pmatrix}
x_0\\
\vdots\\
x_r\\
1
\end{pmatrix}
=0
\right\}
\subset \P^1\times \P^{N},\\
&\widetilde{\mathcal{Q}}=
\left\{
\begin{pmatrix}
s & t
\end{pmatrix}
\begin{pmatrix}
l_{00} & \dots &l_{0r} & q_0\\
l_{10} & \dots & l_{1r} & q_1
\end{pmatrix}
\begin{pmatrix}
x_0\\
\vdots\\
x_r\\
z
\end{pmatrix}
=0
\right\}\subset \P^1\times \P_{\P^{N-r-1}}(\O^{\oplus r+1}\oplus \O(1)).
\end{align*}

We now define
\begin{align*}
&\mathcal{P}^{(r)}\coloneqq
\left\{
\begin{pmatrix}
s & t
\end{pmatrix}
\begin{pmatrix}
l_{00} & \dots &l_{0r}\\
l_{10} & \dots & l_{1r}
\end{pmatrix}
=0
\right\}
\subset \P^1\times \P^{N-r-1}, \\
&\mathcal{Q}^{(r)}
\coloneqq
\left\{
\begin{pmatrix}
s & t
\end{pmatrix}
\begin{pmatrix}
l_{00} & \dots &l_{0r} & q_0\\
l_{10} & \dots & l_{1r} & q_1
\end{pmatrix}
=0
\right\}
\subset \P^1\times \P^{N-r-1}.
\end{align*}
The first projection defines a $\P^{N-2r-2}$-bundle $\mathcal{P}^{(r)}\rightarrow \P^1$, which restricts to a morphism $\phi^{(r)}\colon \mathcal{Q}^{(r)}\rightarrow \P^1$, which is a quadric fibration if \(\dim\mathcal Q^{(r)} \geq 1\).
(By convention, $\mathcal{Q}^{(-1)}=\mathcal{Q}$ and $\phi^{(-1)}=\phi$.)
Furthermore, define \[E^{(r)}\coloneqq h^{-1}(\mathcal{Q}^{(r)})=\P_{\mathcal{Q}^{(r)}}(\O^{\oplus r+1}\oplus \O(0,1)),\]
and let $\pi^{(r)}\colon E^{(r)}\rightarrow \mathcal{Q}^{(r)}$ be the projection.

\begin{lem}\label{lem:hyperbolic-reduction}
$\mathcal{Q}^{(r)}$ satisfies the following properties.
\begin{enumerate}
\item\label{item:hyperbolic-reduction-pencil} $\phi^{(r)}\colon\mathcal{Q}^{(r)}\rightarrow \P^1$
is the hyperbolic reduction of $\phi\colon \mathcal{Q}\rightarrow \P^1$ with respect to $\ell$.
In particular, the degeneracy locus of $\phi^{(r)}$ is defined by a separable polynomial of degree $N+1$.
\item\label{item:hyperbolic-reduction-smooth} $\mathcal{Q}^{(r)}$ is smooth of dimension $N-2r-2$. 
If $N-2r-2>0$, $\mathcal{Q}^{(r)}$ is geometrically connected.
\item\label{item:hyperbolic-reduction-embedding} $\pi^{(r)}\colon E^{(r)}\rightarrow \mathcal{Q}^{(r)}$ induces an embedding $\mathcal{Q}^{(r)}\hookrightarrow F_{r+1}(\mathcal{Q}/\P^1)$ over $\P^1$,
whose image is the relative Fano
scheme of isotropic $(r+1)$-planes
of
$\phi\colon \mathcal{Q}\rightarrow \P^1$
containing $\ell$. 
\item\label{item:hyperbolic-reduction-image}
The restriction
$\pi^{(r)}|_{E^{(r)}\cap (\P^1\times \widetilde{X})}\colon E^{(r)}\cap (\P^1\times \widetilde{X})\rightarrow  \mathcal{Q}^{(r)}$ induces an isomorphism
\[
\mathcal{Q}^{(r)}\setminus\left\{l_{ij} = 0 \, (i= 0,1, j=1,\dots, r)\right\}\xrightarrow{\sim} \left\{m\in F_r(X)\mid \dim \ell\cap m = r-1 \text{ and }\langle \ell,m\rangle \not\subset X\right\}.
\]
\item\label{item:hyperbolic-reduction-independence} 
If $N-2r-2>0$, the $k$-birational equivalence class of $\mathcal{Q}^{(r)}$
does not depend on $\ell$.
\item\label{item:hyperbolic-reduction-max-min} If $N>2$, $\mathcal{Q}^{(0)}$ is $k$-birational to $X$.
If $N=2g$ for some $g\geq 1$, $\mathcal{Q}^{(g-1)}$ is of dimension zero and length $2g+1$.
If $N=2g+1$ for some $g\geq 1$, $\mathcal{Q}^{(g-1)}$ is $k$-isomorphic to the curve $C$ of genus $g$ obtained as the Stein factorization of $F_{g}(\mathcal{Q}/\P^1)\rightarrow \P^1$.
\item\label{item:index}
If $N-2r-2>0$, $\mathcal{Q}^{(r)}$ has a $0$-cycle of degree $1$,
in other words, the index of $\mathcal{Q}^{(r)}$ is $1$.
\item\label{item:rationalpoint}
If $N-3r-3\geq 0$, $\mathcal{Q}^{(r)}$ has a $k$-point.
\end{enumerate}
\end{lem}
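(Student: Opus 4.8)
The plan is to write down a $k$-rational point of $\mathcal{Q}^{(r)}$ directly from its defining equations. Recall that, in the chosen coordinates,
\[
\mathcal{Q}^{(r)}=\left\{\,sl_{0j}+tl_{1j}=0\ \ (j=0,\dots,r),\quad sq_0+tq_1=0\,\right\}\subset\P^1\times\P^{N-r-1},
\]
where $(s:t)$ are the coordinates on $\P^1$, the $l_{ij}$ are linear forms, and the $q_i$ are quadratic forms in $y_{r+1},\dots,y_N$; thus $\mathcal{Q}^{(r)}$ is cut out by $r+1$ forms of bidegree $(1,1)$ together with a single form of bidegree $(1,2)$.

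First I would consider the linear subspace $Z\subset\P^{N-r-1}$ defined by the common vanishing of all $2(r+1)$ linear forms $l_{ij}$ (for $i=0,1$ and $j=0,\dots,r$). As the intersection of at most $2r+2$ hyperplanes defined over $k$, it satisfies $\dim Z\geq (N-r-1)-(2r+2)=N-3r-3\geq 0$ by hypothesis, so $Z$ is a nonempty linear subspace of $\P^{N-r-1}$ defined over $k$, and therefore $Z(k)\neq\emptyset$. Fix a point $y_0\in Z(k)$.

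Next, observe that for this $y_0$ every bilinear expression $sl_{0j}(y_0)+tl_{1j}(y_0)$ vanishes identically in $(s:t)$, so the point $\big((s:t),y_0\big)$ lies on $\mathcal{Q}^{(r)}$ precisely when $sq_0(y_0)+tq_1(y_0)=0$. If $q_0(y_0)=q_1(y_0)=0$, one takes $(s:t)=(1:0)$; otherwise $(s:t)=\big(q_1(y_0):-q_0(y_0)\big)$ is a well-defined $k$-point of $\P^1$ solving this equation. In either case $\big((s:t),y_0\big)$ is the desired $k$-point of $\mathcal{Q}^{(r)}$.

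There is no serious obstacle here: the argument reduces the existence of a $k$-point to the elementary facts that a linear subspace of projective space defined over $k$ has a $k$-point, and that the single remaining equation is linear in $(s:t)$, hence always solvable over $k$ once the linear forms have been killed. The one point requiring attention is the bookkeeping that there are exactly $2(r+1)$ forms $l_{ij}$, so that the dimension count for $Z$ produces precisely the hypothesis $N-3r-3\geq 0$. Note that the $k$-point obtained in this way lies in the locus $\{l_{ij}=0\}$ removed in part~\eqref{item:hyperbolic-reduction-image}, so the geometric description given there does not directly apply to it; but this is immaterial for the statement at hand.
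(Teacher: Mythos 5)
Your argument addresses only part (8) of the lemma --- the existence of a $k$-point when $N-3r-3\geq 0$ --- and for that part it is correct and takes essentially the same route as the paper: the paper likewise notes that the common zero locus of the linear forms $l_{ij}$ in $\P^{N-r-1}$ is nonempty by the same dimension count (it is a linear subspace defined over $k$, cut out by the $2(r+1)$ forms, of dimension at least $N-3r-3\geq 0$, hence has a $k$-point), and then uses the equations of $\mathcal{Q}^{(r)}$ together with the linearity in $(s:t)$ to conclude that $\mathcal{Q}^{(r)}\cap\{l_{ij}=0\}$ has a $k$-point. Your write-up merely makes the final step explicit, including the harmless case split according to whether $q_0$ and $q_1$ both vanish at the chosen point, and your remark that this point lies in the locus excluded in part (4) is accurate and irrelevant to the claim.

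The caveat is that the statement you were given is the full eight-part lemma, and parts (1)--(7) are untouched by your proposal: the identification of $\phi^{(r)}$ with the hyperbolic reduction with respect to $\ell$, smoothness and geometric connectedness of $\mathcal{Q}^{(r)}$ (via the degeneration divisor and the complete-intersection description), the embedding of $\mathcal{Q}^{(r)}$ into $F_{r+1}(\mathcal{Q}/\P^1)$ and the $\P^{r+1}$-bundle/blow-up picture, the isomorphism with the locus of $r$-planes $m$ on $X$ with $\dim\ell\cap m=r-1$, the independence of the $k$-birational class of $\mathcal{Q}^{(r)}$ from $\ell$ via Witt cancellation, the descriptions of $\mathcal{Q}^{(0)}$ and $\mathcal{Q}^{(g-1)}$, and the index-one statement proved by an intersection-number computation. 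So as a proof of the lemma as stated, the proposal is incomplete, even though the piece it does prove is handled correctly and in the same way as the paper.
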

\begin{proof}
\eqref{item:hyperbolic-reduction-pencil}:
For $x =[s:t]\in \P^1$, $\mathcal{Q}_{\kappa(x)}$ corresponds to the symmetric matrix
\[
\begin{pmatrix}
0 & A\\
A^T & B
\end{pmatrix},
\]
where $A$ is the Jacobian matrix for $\{s l_{00} + t l_{10} = \dots = s l_{0r} + t l_{1r} = 0\}$ and $B$ is the Hessian matrix for $s q_0 + t q_1$.
This in turn shows that 
\[\ell^{\perp}=
\left\{
\begin{pmatrix}
s & t
\end{pmatrix}
\begin{pmatrix}
l_{00} & \dots &l_{0r}\\
l_{10} & \dots & l_{1r}
\end{pmatrix}
=0
\right\}
\subset \P^{N}_{\kappa(x)}\]
and the hyperbolic reduction of $\mathcal{Q}_{\kappa(x)}$ with respect to $\ell$ equals $\mathcal{Q}^{(r)}_{\kappa(x)} \subset \mathcal{P}^{(r)}_{\kappa(x)}=\ell^\perp/\ell \subset \P^{N-r-1}_{\kappa(x)}=\P^{N}_{\kappa(x)}/\ell$.
The first statement follows.
As for the second statement, Lemma~\ref{lem:degeneration-divisor-preserved} shows that the degeneracy locus of $\phi^{(r)}\colon \mathcal{Q}^{(r)}\rightarrow \P^1$ is the same as that of $\phi\colon \mathcal{Q}\rightarrow \P^1$, and the latter is defined by a separable polynomial of degree $N+1$ by \cite[Proposition 2.1]{Reid-thesis}.

\eqref{item:hyperbolic-reduction-smooth}:
By \eqref{item:hyperbolic-reduction-pencil} and Lemma~\ref{lem:quadric-bundle-simple-degeneration},
$\mathcal{Q}^{(r)}$ is smooth.
Moreover, we deduce from \eqref{item:hyperbolic-reduction-pencil} that we have $\dim \mathcal{Q}^{(r)}=N-2r-2$.
This implies that $\mathcal{Q}^{(r)}$ is a complete intersection of $r+2$ ample divisors in $\P^1\times \P^{N-r-1}$.
So if $N-2r-2>0$, then $\mathcal{Q}^{(r)}$ is geometrically connected.

\eqref{item:hyperbolic-reduction-embedding}, \eqref{item:hyperbolic-reduction-image}:
Using the equations for $\mathcal{Q}$ and $\mathcal{Q}^{(r)}$,
we may observe that
$\pi^{(r)}\colon E^{(r)}\rightarrow \mathcal{Q}^{(r)}$ is a $\P^{r+1}$-bundle 
whose fibers are
the $(r+1)$-planes of $\phi\colon \mathcal{Q}\rightarrow \P^1$
containing $\ell$.
The induced morphism $\mathcal{Q}^{(r)}\rightarrow F_{r+1}(\mathcal{Q}/\P^1)$ over $\P^1$ is an isomorphism onto its image
because the blow-up of $\mathcal{Q}$ along $\P^1\times \ell$
yields the inverse map.

Similarly, using the equations for $X$ and $\mathcal{Q}^{(r)}$, 
it is direct to see that 
\[X\cap \left\{l_{ij}=0\, (i=0,1, j=0,\cdots, r)\right\}=\left\{l_{ij}=q_i = 0 \, (i=0,1, j=0,\cdots, r)\right\}\subset \P^N\] 
is the union of all linear spaces on $X$ containing $\ell$,
and that 
over the complement of the proper closed subscheme $\mathcal{Q}^{(r)}\cap\left\{l_{ij} = 0 \, (i= 0,1, j=1,\dots, r)\right\}\subset \mathcal{Q}^{(r)}$, the morphism $\pi^{(r)}|_{E^{(r)}\cap (\P^1\times \widetilde{X})}\colon E^{(r)}\cap (\P^1\times \widetilde{X})\rightarrow \mathcal{Q}^{(r)}$ is
a $\P^{r}$-bundle whose fibers are the $r$-planes $m$ on $X$ such that $\dim \ell\cap m = r-1$ and $\langle \ell, m\rangle \not\subset X$.
The induced morphism 
\[\mathcal{Q}^{(r)}\setminus\left\{l_{ij} = 0 \, (i= 0,1, j=1,\dots, r)\right\}\rightarrow\left\{m\in F_r(X)\mid \dim \ell\cap m = r-1 \text{ and }\langle \ell,m\rangle \not\subset X\right\}\] 
is an isomorphism because the blow-up of $X$ along $\ell$ yields the inverse map.

\eqref{item:hyperbolic-reduction-independence}:
It is enough for us to show that the $k(\P^1)$-isomorphism class of the generic fiber of $\mathcal{Q}^{(r)}\rightarrow \P^1$ does not depend on $\ell$.
For $\ell, \ell'\in F_r(X)(k)$, denote by $\mathcal{Q}^{(r)}_\ell\rightarrow \P^1, \mathcal{Q}^{(r)}_{\ell'}\rightarrow \P^1$
the corresponding hyperbolic reductions. Letting $H$ denote a split quadric, we have
\[
Q_{k(\P^1)}\simeq (\mathcal{Q}^{(r)}_{\ell})_{k(\P^1)}\perp H \simeq (\mathcal{Q}^{(r)}_{\ell'})_{k(\P^1)}\perp H.
\]
The Witt cancellation theorem \cite[Theorem 8.4]{EKM-quadratic-forms}
then shows $(\mathcal{Q}^{(r)}_\ell)_{k(\P^1)}\simeq (\mathcal{Q}^{(r)}_{\ell'})_{k(\P^1)}$.

\eqref{item:hyperbolic-reduction-max-min}:
If $N>2$, then $\mathcal{Q}^{(0)}$ is $k$-birational to the image of $X$ under the projection away from a $k$-point on $X$, hence $k$-birational to $X$.
If $N=2g$ for some $g\geq 1$, $\mathcal{Q}^{(g-1)}$ is a complete intersection of $g$ divisors of type $(1,1)$ and a divisor of type $(1,2)$ in $\P^1\times \P^{g}$, hence $\mathcal{Q}^{(g-1)}$ is of dimension zero and length $2g+1$.
If $N=2g+1$ for some $g\geq 1$,
the composition $\mathcal{Q}^{(g-1)}\hookrightarrow F_{g}(\mathcal{Q}/\P^1)\rightarrow C$ is an isomorphism over \(\P^1\)
since the double covers $\phi^{(g-1)}\colon\mathcal{Q}^{(g-1)}\rightarrow \P^1$ and $C\rightarrow \P^1$ are both branched over the degeneracy locus of $\phi\colon \mathcal{Q}\rightarrow \P^1$.

\eqref{item:index}: 
Since $\mathcal{Q}^{(r)}$ is a complete intersection of $r+1$ divisors of type $(1,1)$ and a divisor of type $(1,2)$ in $\P^1\times \P^{N-r-1}$, 
the zero-cycle $\mathcal{Q}^{(r)}\cdot (\P^1\times \P^{r+1}) - (r+1) \mathcal{Q}^{(r)}\cdot (*\times \P^{r+2})$ is of degree $1$.

\eqref{item:rationalpoint}:
If $N-3r-3\geq 0$, 
then $\left\{l_{ij}=0\, (i=0,1, j=1,\cdots, r)\right\} \subset\P^{N-r-1}$
is non-empty by a dimension count.
Using the equations for $\mathcal{Q}^{(r)}$
and the fact that $l_{ij}$ are linear forms, $\mathcal{Q}^{(r)}\cap\left\{l_{ij}=0\, (i=0,1, j=0,\cdots, r)\right\}$ has a $k$-point.
This concludes the proof.
\end{proof}

The following result is well-known to the experts; see \cite[Remark 3.4.1(a)]{CTSSD} and \cite[Section 6.1]{HT-equivariant}, where a special case
is stated.

\begin{prop}\label{HT}
The following conditions are equivalent:
\begin{enumerate}
\item\label{item:HT-Fano-scheme-pt} $F_{r+1}(X)(k)\neq \emptyset$;
\item\label{item:HT-rel-plane} $\phi\colon \mathcal{Q}\rightarrow \P^1$ has 
a relative 
$(r+1)$-plane containing $\P^1\times \ell$;
\item\label{item:HT-section} $\phi^{(r)}\colon\mathcal{Q}^{(r)}\rightarrow \P^1$
has a section.
\end{enumerate}
If any of these equivalent conditions holds, 
then $\mathcal{Q}^{(r)}$ is $k$-rational.
\end{prop}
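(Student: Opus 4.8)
The plan is to prove the cyclic chain $(1)\Rightarrow(3)\Rightarrow(2)\Rightarrow(1)$ together with the rationality assertion. The equivalence $(2)\Leftrightarrow(3)$ is a direct translation of Lemma~\ref{lem:hyperbolic-reduction}\eqref{item:hyperbolic-reduction-embedding}: since that lemma identifies $\mathcal{Q}^{(r)}$ with the relative Fano scheme of isotropic $(r+1)$-planes of $\phi\colon\mathcal{Q}\to\P^1$ containing $\ell$, a section of $\phi^{(r)}$ is the same data as a section of $F_{r+1}(\mathcal{Q}/\P^1)\to\P^1$ whose image consists of $(r+1)$-planes containing $\ell$, i.e.\ a relative $(r+1)$-plane of $\mathcal{Q}\to\P^1$ containing $\P^1\times\ell$. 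We may assume throughout that $\dim\mathcal{Q}^{(r)}=N-2r-2\geq 2$: otherwise Lemma~\ref{lem:hyperbolic-reduction}\eqref{item:hyperbolic-reduction-max-min} shows $\mathcal{Q}^{(r)}$ is either $0$-dimensional or a curve of genus $\geq1$, so $\phi^{(r)}$ has no section, while $F_{r+1}(X)$ is geometrically empty by Lemma~\ref{lem:Kuznetsov_fanoscheme}; thus all three conditions fail and the statement is vacuous.

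Granting $(3)$, the rationality assertion is immediate: a section of $\phi^{(r)}$ gives a $k(\P^1)$-point on the generic fiber, which is a smooth quadric of dimension $N-2r-3\geq1$ over $k(\P^1)$, so stereographic projection from that point makes the generic fiber $k(\P^1)$-birational to $\P^{N-2r-3}_{k(\P^1)}$; hence $\mathcal{Q}^{(r)}$ is $k$-birational to $\P^1\times\P^{N-2r-3}$, and in particular $k$-rational.

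For $(1)\Rightarrow(3)$, choose $m\in F_{r+1}(X)(k)$ and a $k$-rational $r$-plane $\ell'\subset m$ (for instance a coordinate hyperplane of $m$). Since $m$ is an $(r+1)$-plane on $X$ containing $\ell'$, the embedding of Lemma~\ref{lem:hyperbolic-reduction}\eqref{item:hyperbolic-reduction-embedding} turns it into the constant section $t\mapsto[m]$ of the hyperbolic reduction $\mathcal{Q}^{(r)}_{\ell'}\to\P^1$, so the generic fiber of $\mathcal{Q}^{(r)}_{\ell'}\to\P^1$ has a $k(\P^1)$-point. By the Witt cancellation argument used in the proof of Lemma~\ref{lem:hyperbolic-reduction}\eqref{item:hyperbolic-reduction-independence}, the generic fibers of $\mathcal{Q}^{(r)}\to\P^1$ (here $\mathcal{Q}^{(r)}=\mathcal{Q}^{(r)}_\ell$) and of $\mathcal{Q}^{(r)}_{\ell'}\to\P^1$ are isomorphic as quadrics over $k(\P^1)$; hence the former has a $k(\P^1)$-point, which spreads out to a section of $\phi^{(r)}$ since $\phi^{(r)}$ is proper. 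This is $(3)$.

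The remaining implication $(2)\Rightarrow(1)$ is where the work lies. A relative $(r+1)$-plane of $\mathcal{Q}\to\P^1$ containing $\P^1\times\ell$ is a family $W_t=\langle\ell,\psi(t)\rangle\subseteq\mathcal{Q}_t$ for a morphism $\psi\colon\P^1\to\P^{N-r-1}$ of some degree $d$, where $\mathcal{Q}_t$ is the member of the pencil over $t\in\P^1$. If $d=0$ then $W:=W_t$ is independent of $t$, so $W\subseteq\bigcap_t\mathcal{Q}_t=X$ and $[W]\in F_{r+1}(X)(k)$. For $d>0$ I would pass to the span $V:=\langle\ell,\psi(\P^1)\rangle\subseteq\P^N$, which contains every $W_t$, and study the pencil $\{\mathcal{Q}_t|_V\}$ of quadrics on $V$. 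Evaluating $\psi$ at the $k$-points $[1:0],[0:1]\in\P^1$ and using the equations defining $\mathcal{Q}^{(r)}$ shows $W_0:=\langle\ell,\psi([1:0])\rangle\subseteq Q_0$ and $W_\infty:=\langle\ell,\psi([0:1])\rangle\subseteq Q_1$, where $Q_0,Q_1$ span the pencil (and we may assume $V\not\subseteq Q_0,Q_1$, else $W_\infty\subseteq X$ or $W_0\subseteq X$ already). In the basic case where $\psi(\P^1)$ is a line, so $\codim_V W_t=1$ — which covers $d=1$ — one writes $Q_0|_V=w_0L_0$ and $Q_1|_V=w_\infty L_\infty$ as products of $k$-rational linear forms ($w_0,w_\infty$ cutting out the hyperplanes $W_0,W_\infty$ of $V$), and imposing $W_t\subseteq\mathcal{Q}_t$ for \emph{all} $t$ forces $L_\infty=-L_0$ after rescaling; then $Q_0|_V$ and $Q_1|_V$ share the linear factor $L_0$, and the zero locus $\{L_0=0\}\subseteq V$ is an $(r+1)$-plane contained in $X=Q_0\cap Q_1$ and defined over $k$. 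The hard part, which I expect to be the main obstacle, is the analogous analysis when $\codim_V W_t\geq2$: here $\{\mathcal{Q}_t|_V\}$ is a pencil of quadrics of bounded rank each vanishing on a moving linear subspace through $\ell$, and one must show — using the smoothness of $X$, together with a descent on $d$ (or a reduction to a generic pencil of $1$-planes through $\ell$, and/or feeding back the $r$-planes on $X$ produced by Lemma~\ref{lem:hyperbolic-reduction}\eqref{item:hyperbolic-reduction-image}) — that the base locus of $\{\mathcal{Q}_t|_V\}$ still contains a $k$-rational $(r+1)$-plane on $X$.
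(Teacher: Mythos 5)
Your overall structure is mostly sound: the reduction to $\dim\mathcal{Q}^{(r)}\geq 2$, the equivalence \eqref{item:HT-rel-plane}$\Leftrightarrow$\eqref{item:HT-section} via Lemma~\ref{lem:hyperbolic-reduction}\eqref{item:hyperbolic-reduction-embedding}, the rationality consequence of \eqref{item:HT-section}, and the implication \eqref{item:HT-Fano-scheme-pt}$\Rightarrow$\eqref{item:HT-section} are all correct. (Your route for the last of these --- take $\ell'\subset m$ rational, observe that $\P^1\times m$ gives a section of $\mathcal{Q}^{(r)}_{\ell'}\to\P^1$, and transfer the $k(\P^1)$-point to $\mathcal{Q}^{(r)}_{\ell}$ by Witt cancellation as in Lemma~\ref{lem:hyperbolic-reduction}\eqref{item:hyperbolic-reduction-independence} --- is a mild variant of the paper's argument, which instead produces a plane through $\ell$ itself over $k(\P^1)$ using the Witt extension theorem.)

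The genuine gap is the implication \eqref{item:HT-rel-plane}$\Rightarrow$\eqref{item:HT-Fano-scheme-pt}, which you only sketch for $d=0$ and for a codimension-one special case, explicitly leaving the general case open. This direction is exactly where the paper invokes the Amer--Brumer theorem in Leep's form for linear subspaces (\cite[Theorem 2.2]{Leep}): $F_{r+1}(X)(k)\neq\emptyset$ if and only if $F_{r+1}(\mathcal{Q}_{k(\P^1)})(k(\P^1))\neq\emptyset$. Granting that, \eqref{item:HT-rel-plane}$\Rightarrow$\eqref{item:HT-Fano-scheme-pt} is immediate: a relative $(r+1)$-plane of $\mathcal{Q}\to\P^1$ restricts over the generic point to an $(r+1)$-plane on $\mathcal{Q}_{k(\P^1)}$ defined over $k(\P^1)$, and the theorem descends its existence to a $k$-rational $(r+1)$-plane on $X$ (the same theorem, combined with Witt extension, also gives \eqref{item:HT-Fano-scheme-pt}$\Rightarrow$\eqref{item:HT-rel-plane} directly). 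Your proposed substitute --- analyzing the pencil restricted to the span $V=\langle\ell,\psi(\P^1)\rangle$ and hunting for a common $(r+1)$-plane there --- is unlikely to close the gap as stated: the Amer--Brumer/Leep theorem is an existence statement proved by specialization arguments, and the $k$-rational $(r+1)$-plane it produces need not lie in $V$ or bear any simple relation to the moving family $W_t$, so restricting attention to the base locus of $\{\mathcal{Q}_t|_V\}$ is too rigid; carrying out your plan in general would essentially amount to reproving Leep's theorem. The fix is simply to cite it, as the paper does.
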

\begin{proof}
For \eqref{item:HT-Fano-scheme-pt}$\Leftrightarrow$\eqref{item:HT-rel-plane}, the Amer--Brumer theorem \cite[Theorem 2.2]{Leep} shows that $F_{r+1}(X)(k)\neq\emptyset$ if and only if $F_{r+1}(\mathcal{Q}_{k(\P^1)})(k(\P^1))\neq\emptyset$;
by the Witt extension theorem \cite[Theorem 8.3]{EKM-quadratic-forms}, 
the latter is equivalent to the existence of an $(r+1)$-plane
on $\mathcal{Q}_{k(\P^1)}$, containing $\ell$, and defined over $k(\P^1)$.
Next, the equivalence \eqref{item:HT-rel-plane}$\Leftrightarrow$\eqref{item:HT-section} 
follows from Lemma~\ref{lem:hyperbolic-reduction}\eqref{item:hyperbolic-reduction-embedding}.
Finally, if~\eqref{item:HT-section} holds, then the generic fiber of \(\phi^{(r)}\) is a smooth quadric with a \(k(\P^1)\)-point, so \(\cal Q^{(r)}\) is \(k\)-rational.
\end{proof}

\begin{rem}
If we allow $X$ to be singular and assume that $\ell\in F_r(X)(k)$ is entirely contained in the smooth locus of $X$, most of the results in this section, after some suitable fixes, still hold.
An analogue of Proposition~\ref{HT} is also true. The point is that if $\mathcal{Q}_{k(\P^1)}$ is singular, then $\mathcal{Q}^{(r)}_{k(\P^1)}$ is singular, but any singular quadric contains a rational point.
We do not state and prove results in this generality in this paper because it is not necessary for our purposes.
\end{rem}

\subsection{Proofs of Theorems~\ref{Fanoscheme}, \ref{thm:symmetric}, and~\ref{thm:fano-unirational}}

\begin{proof}[Proof of Theorem~\ref{thm:symmetric}]
Under the assumptions of Theorem~\ref{thm:symmetric}, choose $\ell\in F_r(X)(k)$ so that $\mathcal{Q}^{(r)}$ is defined.
For $m\in F_r(X)$, not necessarily a $k$-point, we say that $m$ is \defi{general with respect to $\ell$} if:
(1) $\ell\cap m=\emptyset$; and (2) $X\cap \langle \ell, m\rangle \subset \langle \ell, m\rangle = \P^{2r+1}$ is defined by two quadrics which, over an algebraic closure of the residue field of $m$ and for some choice of coordinates, correspond to symmetric matrices of the form
\[
\begin{pmatrix}
    0 & I_{r+1}\\
    I_{r+1} & 0
\end{pmatrix}, \quad
\begin{pmatrix}
0 & M\\
M & 0
\end{pmatrix},
\]
where $M$ is diagonal with distinct diagonal entries.
By Lemma \ref{lem:elementary},
property (2) is equivalent to separability of a certain polynomial associated to $\ell$ and $m$. Define
\[
U\coloneqq\left\{m \in F_r(X) \mid m \text{ is general with respect to }\ell \right\}.
\]
$U$ is a priori only defined over $\kbar$, but it actually descends to $k$ as $\ell$ is defined over $k$.
Clearly, $U$ is open in $F_r(X)$, and moreover, it is non-empty by Lemma~\ref{lem:reid_1}.
For every $m\in U$, Lemma~\ref{lem:reid_2} implies that the (singular) intersection $X\cap \langle \ell, m\rangle \subset \langle \ell,m\rangle = \bb P^{2r+1}$ contains exactly $r+1$ distinct $r$-planes $m_1,\dots, m_{r+1}$ that intersect $\ell$ along an $(r-1)$-plane.
Using the isomorphism in Lemma \ref{lem:hyperbolic-reduction}\eqref{item:hyperbolic-reduction-image},
we now define a morphism
\[ 
\psi\colon U\rightarrow \Sym^{r+1}\mathcal{Q}^{(r)},\quad
m\mapsto (m_1,\dots, m_{r+1}).
\]
The morphism $\psi$ is defined over $k$ by the same reasoning as above.
Moreover, $\psi$ is one-to-one onto its image on the level of $\kbar$-points
because we have $\langle m_1,\cdots, m_{r+1}\rangle =\langle \ell,m\rangle$ and $m$ is the unique $r$-plane on $X\cap \langle \ell,m\rangle$ that is disjoint from $\ell$.
Since $\dim F_r(X) = \dim \Sym^{r+1}\mathcal{Q}^{(r)}=(r+1)(N-2r-2)$ by Lemma \ref{lem:hyperbolic-reduction}\eqref{item:hyperbolic-reduction-smooth}
and $\psi$ factors through the smooth locus of $\Sym^{r+1}\mathcal{Q}^{(r)}$,
it then follows that 
$\psi$ is an open immersion.
If $N-2r-2>0$, then $F_r(X)$ and $\Sym^{r+1}\mathcal{Q}^{(r)}$ are both geometrically integral by Lemmas~\ref{lem:Kuznetsov_fanoscheme} and~\ref{lem:hyperbolic-reduction}, so $F_r(X)$ is $k$-birational to $\Sym^{r+1}\mathcal{Q}^{(r)}$.

It remains to consider the case $N-2r-2=0$. Set $g=r+1$.
We have $\dim F_{g-1}(X)=\dim \Sym^{g}\mathcal{Q}^{(g-1)}=0$.
In this case, Lemma~\ref{lem:reid-3} shows that if $m\in F_{g-1}(X)$ satisfies property (1) of being general with respect to \(\ell\), then property (2) is automatic.
Hence $U$ coincides with the subscheme of $F_{g-1}(X)$ parametrizing $(g-1)$-planes on $X$ disjoint from $\ell$.
It remains to show that $\psi$ gives an isomorphism between $U$ and the subscheme of $\Sym^{g}\mathcal{Q}^{(g-1)}$ parametrizing $g$-tuples of distinct points of $\mathcal{Q}^{(g-1)}$.
The former is of length $\binom{2g+1}{g}$ by the analysis on the configuration of the $(g-1)$-planes on $X$ due to Reid \cite[Theorem 3.8]{Reid-thesis},
and the latter is also of length $\binom{2g+1}{g}$
by Lemma~\ref{lem:hyperbolic-reduction}\eqref{item:hyperbolic-reduction-max-min}. The claim thus follows, and this concludes the proof of the theorem.
\end{proof}

\begin{proof}[Proof of Theorem~\ref{Fanoscheme}]
If $F_{r+1}(X)(k)\neq \emptyset$, then $F_r(X)(k)\neq \emptyset$, and hence $\mathcal{Q}^{(r)}$ is defined.
Proposition~\ref{HT} shows that $\mathcal{Q}^{(r)}$ is $k$-rational, and
Theorem~\ref{thm:symmetric}
shows that $F_r(X)$ is $k$-birational to 
$\Sym^{r+1}\mathcal{Q}^{(r)}$, hence to $\Sym^{r+1}\P^{N-2r-2}$.
Symmetric powers of a projective space over $k$ are $k$-rational by a result of Mattuck \cite{Mattuck69}, completing the proof.
\end{proof}

\begin{proof}[Proof of Theorem~\ref{thm:fano-unirational}]
The implications \eqref{item:fano-unirational-1}\(\Rightarrow\)\eqref{item:fano-unirational-2}\(\Rightarrow\)\eqref{item:fano-unirational-3} are standard, so it remains to show \eqref{item:fano-unirational-3}\(\Rightarrow\)\eqref{item:fano-unirational-1}.

First we address the case of \(F_1(X)\) and arbitrary \(k\).
For this, by Theorem~\ref{thm:symmetric} and \cite{Mattuck69},
it is enough to show that 
if $F_1(X)(k)\neq\emptyset$ then $\mathcal{Q}^{(1)}$ is separably $k$-unirational.
We show this statement by induction on $N$.
By \cite[Lemma 4.9]{BW-IJ}, we may and will assume that $k$ is infinite. 

As for the base case $N=6$, by Lemma~\ref{lem:hyperbolic-reduction}\eqref{item:rationalpoint}, $\mathcal{Q}^{(1)}$ has a $k$-point.
Since $\phi^{(1)}\colon \mathcal{Q}\rightarrow \P^1$ is a conic bundle with $7$ singular fibers by Lemma~\ref{lem:hyperbolic-reduction}\eqref{item:hyperbolic-reduction-pencil} and \eqref{item:hyperbolic-reduction-smooth}, 
a result of Koll\'ar--Mella \cite[Theorem 7]{kollarmella17} shows that $\mathcal{Q}^{(1)}$ is $k$-unirational with a degree $8$ dominant rational map $\P^2\dashrightarrow \mathcal{Q}^{(1)}$.
Since $k$ is of characteristic $\neq 2$, $\mathcal{Q}^{(1)}$ is separably $k$-unirational.

For $N>6$, choose $\ell\in F_1(X)(k)$ and consider a general pencil $X\dashrightarrow \P^1$ of hyperplane sections containing $\ell$.
By the proof of \cite[Lemma A.4]{CT-intersection-quadrics},
a general member of the pencil is smooth,
hence 
the blow-up along the base locus
$X'\rightarrow \P^1$ has a smooth generic fiber.
Let $\mathcal{Q}^{(1)}\dashrightarrow \P^1$ be the induced pencil. 
For the blow-up along the base locus
$(\mathcal{Q}^{(1)})'\rightarrow \P^1$, the generic fiber is the hyperbolic reduction of the generic fiber of $X'\rightarrow \P^1$
with respect to the line $\ell$, and it is separably $k(\P^1)$-unirational by induction.
This implies that $(\mathcal{Q}^{(1)})'$ is separably $k$-unirational, so is $\mathcal{Q}^{(1)}$. 
This completes the proof for \(F_1(X)\).

It remains to show for \(0\leq r\leq \lfloor \frac{N}{2}\rfloor -2\) that if \(F_r(X)(\bb R)\neq\emptyset\), then \(F_r(X)\) is \(\bb R\)-unirational.
Choose $\ell\in F_r(X)(\R)$ so that $\mathcal{Q}^{(r)}$ is defined.
Then $\mathcal{Q}^{(r)}$ has an $\R$-point by Lemma~\ref{lem:hyperbolic-reduction}\eqref{item:index}, so it is $\R$-unirational by \cite[Collorary 1.8]{kollar1999} applied to the quadric bundle $\phi^{(r)}\colon \mathcal{Q}^{(r)}\rightarrow \P^1$. 
Therefore $F_r(X)$ is $\R$-unirational by Theorem~\ref{thm:symmetric}.
\end{proof}

\begin{rem}
A potential strategy for further extending Theorem~\ref{thm:fano-unirational} to $F_r(X)$ for arbitrary $r>1$ over arbitrary fields would be to reduce to showing separable $k$-unirationality of the surface $\mathcal{Q}^{(r)}$ for $X\subset \P^{2r+4}$. 
Nevertheless, for $r>1$, it is not even clear whether $\mathcal{Q}^{(r)}$ has a $k$-point.
The difficulty lies in the fact that a conic bundle over $\P^1$ with a $0$-cycle of degree $1$ does not necessarily have a $k$-point,
as first observed by Colliot-Th\'el\`ene--Coray \cite{CTC79}.
\end{rem}

\subsection{Rationality results for \(F_r(X)\) over certain fields}\label{section-specific-fields}
In this section, we give some consequences of Theorems~\ref{Fanoscheme} and~\ref{thm:symmetric} over some specific fields \(k\). 
First, we prove the following result, which generalizes the $r=0$ case due to  Colliot-Th\'el\`ene--Sansuc--Swinnerton-Dyer \cite[Theorem 3.4]{CTSSD}.

\begin{cor}\label{cor:fano-scheme-rational}
Over a field \(k\) of characteristic \(\neq 2\),
let $X$ be a smooth complete intersection of two quadrics in $\P^N$.
\begin{enumerate}
\item\label{item:Ci} If $k$ is a $C_i$-field for some $i\geq 0$, then, for every $0\leq r\leq \lfloor\frac{N}{2}\rfloor -2^i-1$, $F_r(X)$ is $k$-rational.
In particular, if \(k\) is algebraically closed, then \(F_r(X)\) is rational for all $0\leq r\leq \lfloor\frac{N}{2}\rfloor -2$.
\item\label{item:p-adic} If $k$ is a $p$-adic field, then, for every $0\leq r\leq \lfloor \frac{N}{2}\rfloor -5$, $F_r(X)$ is $k$-rational.
\item\label{item:totally-imaginary} If $k$ is a totally imaginary number field, then, for every $0\leq r\leq \lfloor\frac{N}{3}\rfloor -5$, $F_r(X)$ is $k$-rational.
\end{enumerate}
\end{cor}
\begin{proof}
First, we prove these statements under the assumption that \(F_r(X)(k)\neq\emptyset\).

\eqref{item:Ci}: The function field $k(\P^1)$ is a $C_{i+1}$-field by the Lang--Nagata theorem.
Since $\phi^{(r)}\colon\mathcal{Q}^{(r)}\rightarrow \P^1$ is a fibration into quadrics in $N-2r-1>2^{i+1}$ variables, the assumption implies it has a section.
Hence, $\mathcal{Q}^{(r)}$ is $k$-rational, so
$F_r(X)$ is $k$-rational by Theorem~\ref{thm:symmetric}.

\eqref{item:p-adic}: Let $k$ be a $p$-adic field.
By theorems of Parimala--Suresh \cite[Theorem 4.6]{parimalasuresh2010} (for $p\neq 2$) and Leep \cite[Theorem 3.4]{Leep2013} (for $p=2$; see also~\cite{ParimalaSuresh14})
on the $u$-invariant of quadratic forms over $p$-adic function fields,
any quadric bundle of relative dimension $\geq 7$  
over a curve defined over $k$ has a section.
Applying this to $\phi^{(r)}\colon \mathcal{Q}^{(r)}\rightarrow \P^1$, together with  
Theorem~\ref{thm:symmetric},
yields the result.

\eqref{item:totally-imaginary}:
Let $k$ be a totally imaginary number field.
If $-1\leq r\leq \lfloor\frac{N}{3}\rfloor -5$, the result of Leep \cite[Theorem 2.7]{Leep84} implies that $\left\{l_{ij}=q_i =0\, (i=0,1, j=0,\cdots,r)\right\}\subset \P^{N-r-1}$ has a $k$-point,
which yields a section for $\phi^{(r)}\colon \mathcal{Q}^{(r)}\rightarrow \P^1$, showing that $\mathcal{Q}^{(r)}$ is $k$-rational.
Theorem~\ref{thm:symmetric} then implies $F_r(X)$ is $k$-rational.

It remains to show by induction on \(r\) that \(F_r(X)(k)\neq\emptyset\) in each of the above cases. 
First assume $r=0$.
In each of the above cases, $\phi=\phi^{(-1)}$ has a section, hence $X(k)\neq \emptyset$ by \cite[Theorem 2.2]{Leep}.
Now assume \(r\geq 1\) and that \(F_{r-1}(X)(k)\neq\emptyset\). In each of the above cases, \(\phi^{(r-1)}\) has a section, so \(F_r(X)(k)\neq\emptyset\) by Proposition~\ref{HT}.
\end{proof}

If $k=\F_q$ is a finite field, 
the $i=1$ case of Corollary~\ref{cor:fano-scheme-rational}\eqref{item:Ci} applies,
so the only case when $\F_q$-rationality of $F_r(X)$ is undetermined is when $r=\lfloor \frac{N}{2}\rfloor-2$.
When $N$ is odd, we show $\F_q$-rationality for the remaining case.

\begin{cor}\label{thm:fano-scheme-rational-finite}
Let $\bb F_q$ be a finite field of characteristic \(\neq 2\).
Fix $g\geq 2$, and let $X$ be a smooth complete intersection of two quadrics in $\P^{2g+1}$.
Then, for every $0\leq r\leq g-2$, $F_r(X)$ is $\bb F_q$-rational.
\end{cor}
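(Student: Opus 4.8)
The plan is to reduce the whole statement to the single assertion that $F_{g-1}(X)(\F_q)\neq\emptyset$ and then quote Theorem~\ref{Fanoscheme}. First I would note that once a maximal linear space $\Lambda\cong\P^{g-1}_{\F_q}\subset X$ is known to exist over $\F_q$, it contains $\F_q$-rational $j$-planes for every $0\le j\le g-1$ (take coordinate subspaces of $\Lambda$), so $F_j(X)(\F_q)\neq\emptyset$ for all $0\le j\le g-1$. Since $N=2g+1$, we have $\lfloor N/2\rfloor-2=g-2$, so for every $0\le r\le g-2$ both hypotheses of Theorem~\ref{Fanoscheme} are satisfied (namely $0\le r\le\lfloor N/2\rfloor-2$ and $F_{r+1}(X)(\F_q)\neq\emptyset$), and Theorem~\ref{Fanoscheme} then gives that $F_r(X)$ is $\F_q$-rational. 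For the subrange $0\le r\le g-3$ this also follows directly from Corollary~\ref{thm:fano-scheme-rational}\eqref{item:Ci} since $\F_q$ is a $C_1$-field, so the genuinely new input is the case $r=g-2$.

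It therefore remains to produce the maximal linear space over $\F_q$, i.e.\ to show $F_{g-1}(X)(\F_q)\neq\emptyset$. Here I would use that $F_{g-1}(X)$ is a torsor under the abelian variety $\bPic_{C/\F_q}^0$, where $C$ is the smooth projective genus $g$ curve obtained as the Stein factorization of $F_g(\mathcal Q/\P^1)\to\P^1$ (see \cite[Theorem~1.1]{Wang-maximal-linear-spaces}). By Lang's theorem, every torsor under a connected algebraic group over a finite field is trivial; applying this to the abelian variety $\bPic_{C/\F_q}^0$ shows that $F_{g-1}(X)$ has an $\F_q$-point, which completes the argument.

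The step I expect to be the crux — though it is not really an obstacle, given the results already in hand — is exactly this last point: recognizing $F_{g-1}(X)$ as a Jacobian torsor so that Lang's theorem forces it to be split over the finite field. Everything downstream is formal from Theorem~\ref{Fanoscheme}. I would also remark that the argument is uniform in $g\ge 2$ and does not need Theorems~\ref{maximallinearspace} or~\ref{thm:symmetric}, although one could equally route through them: $F_{g-1}(X)(\F_q)\neq\emptyset$ gives $\F_q$-rationality of $\mathcal Q^{(g-2)}$ by Theorem~\ref{maximallinearspace}, and then $F_{g-2}(X)$ is $\F_q$-birational to $\Sym^{g-1}\mathcal Q^{(g-2)}$ by Theorem~\ref{thm:symmetric}, hence $\F_q$-rational by \cite{Mattuck69}.
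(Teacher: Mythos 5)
Your proposal is correct and follows essentially the same route as the paper: the paper likewise handles $0\le r\le g-3$ via Corollary~\ref{thm:fano-scheme-rational}\eqref{item:Ci}, and for $r=g-2$ it observes that $F_{g-1}(X)$ is a torsor under an abelian variety (citing Reid's thesis rather than Wang, an immaterial difference), so Lang's theorem gives $F_{g-1}(X)(\F_q)\neq\emptyset$ and Theorem~\ref{Fanoscheme} concludes.
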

\begin{proof}
By Corollary~\ref{cor:fano-scheme-rational}\eqref{item:Ci}, it remains to consider the case $r=g-2$.
It follows from \cite[Theorem 4.8]{Reid-thesis} that $F_{g-1}(X)$ is a torsor under an abelian variety over \(\bb F_q\),
so it has a \(\bb F_q\)-point by Lang's theorem.
Now the result is immediate from Theorem~\ref{Fanoscheme}.
\end{proof}

When $N$ is even, the analogue of Corollary~\ref{thm:fano-scheme-rational-finite} does not hold: the second maximal Fano scheme \(F_{N/2 - 2}(X)\) is not necessarily rational over finite fields. 
Indeed, for $N=4$, over any finite field $\F_q$ there exist degree 4 del Pezzo surfaces that are not \(\bb F_q\)-rational \cite[Theorem 3.2]{rybakov05}.

Finally, over an algebraically closed field of characteristic $\neq 2$, let $C$ be a hyperelliptic curve of genus $g$.
One can then associate to $C$ a smooth complete intersection $X$ of two quadrics in $\P^{2g+1}$, uniquely determined up to isomorphism, such that the Stein factorization of $F_{g}(\mathcal{Q}/\P^1)\rightarrow \P^1$ yields $C$.
In \cite{Ramanan81}, Ramanan established an isomorphism between various moduli spaces of orthogonal and spin bundles on $C$ and the Fano schemes of linear spaces on $X$.
More precisely, let $-1$ be the hyperelliptic involution
on $C$ and fix an $(-1)$-invariant line bundle $\mathcal{L}$ of degree $2g-1$ on $C$.
For $1\leq n\leq g$, let $U_{n,\xi}$ be the moduli space of $(-1)$-invariant orthogonal bundles $\mathcal{E}$ of rank $2n$ with $\Gamma^+(2n)$-structure such that, for every Weierstrass point $x\in C$, the eigenspace associated to the eigenvalue $-1$ on $\mathcal{E}\otimes \mathcal{L}(x)$ has dimension $1$.
Here $\Gamma^+(2n)$ is a certain subgroup of the group of units of the Clifford algebra introduced in \cite[Section 2]{Ramanan81}.
The moduli space $U_{n,\xi}$ is isomorphic to $F_{g-n}(X)$ by \cite[Theorem 3]{Ramanan81}, so using Corollary~\ref{cor:fano-scheme-rational}\eqref{item:Ci}, we obtain:
\begin{cor}\label{thm:modulirational}
Over an algebraically closed field of characteristic $\neq 2$,
let $C$ be a hyperelliptic curve of genus $g$. 
Then, for every $2\leq n\leq g$, the moduli space $U_{n,\xi}$ is rational.
\end{cor}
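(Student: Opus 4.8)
The plan is essentially to unwind the chain of identifications that have been set up in the preceding sections and reduce the statement to Corollary~\ref{thm:fano-scheme-rational}\eqref{item:Ci}. The key input is Ramanan's theorem \cite[Theorem 3]{Ramanan81}, which, in the stated setup over an algebraically closed field, provides an isomorphism of varieties $U_{n,\xi}\cong F_{g-n}(X)$, where $X\subset\P^{2g+1}$ is the smooth complete intersection of two quadrics canonically attached to the hyperelliptic curve $C$ of genus $g$ (i.e.\ $C$ is recovered as the Stein factorization of $F_g(\mathcal Q/\P^1)\to\P^1$). So the first step is simply to invoke this isomorphism and translate the rationality question for $U_{n,\xi}$ into the rationality question for $F_{g-n}(X)$.

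Next I would check that the numerical range $2\le n\le g$ is exactly the range in which $F_{g-n}(X)$ is a \emph{non-maximal} Fano scheme of linear spaces. Writing $r=g-n$, the condition $2\le n\le g$ becomes $0\le r\le g-2$. Since $N=2g+1$ here, we have $\lfloor N/2\rfloor-2 = \lfloor (2g+1)/2\rfloor - 2 = g-2$, so the range $0\le r\le g-2$ coincides with $0\le r\le \lfloor N/2\rfloor - 2$. In particular $r$ falls in the range covered by Corollary~\ref{thm:fano-scheme-rational}\eqref{item:Ci}: an algebraically closed field is a $C_0$-field, so $F_r(X)$ is rational for all $0\le r\le \lfloor N/2\rfloor - 2 = \lfloor N/2\rfloor - 2^0 - 1$. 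Composing with Ramanan's isomorphism, $U_{n,\xi}$ is rational for every $2\le n\le g$.

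I do not anticipate a genuine obstacle in the mathematical content; the statement is a formal consequence of results already in hand. The only points requiring a little care are bookkeeping: verifying that the index shift $r=g-n$ really lands in the claimed interval (in particular that $n=g$ gives $r=0$, i.e.\ $F_0(X)=X$ itself, which is classically rational, and that $n=1$, giving $r=g-1$, is \emph{excluded} — this is why the corollary starts at $n=2$, since $F_{g-1}(X)$ is the Fano scheme of maximal linear spaces, a torsor under $\bPic^0_{C/k}$ and generally not rational), and confirming that Ramanan's isomorphism is an isomorphism of $k$-varieties (not merely a bijection on points) so that it transports rationality. Both are immediate from the cited results. Thus the proof is the two-line argument already given in the excerpt, and no further elaboration is needed.
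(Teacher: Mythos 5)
Your proposal matches the paper's argument exactly: the paper likewise invokes Ramanan's isomorphism $U_{n,\xi}\cong F_{g-n}(X)$ and then applies Corollary~\ref{thm:fano-scheme-rational}\eqref{item:Ci} over the algebraically closed ($C_0$) field, with the index check $r=g-n\in[0,g-2]=[0,\lfloor N/2\rfloor-2]$ being the only bookkeeping. Your verification of the range and the exclusion of $n=1$ is correct and consistent with the paper.
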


\subsection{Applications to rational points}\label{sec:arithmetic-applications}

The following applications were kindly suggested to us by Jean-Louis Colliot-Th\'el\`ene.

\begin{cor}\label{cor:brauer-surjective}
Over a field $k$ of characteristic $0$, let $X$ be a smooth complete intersection of two quadrics in $\P^N$.
Let $0\leq r<\frac{N}{2}-2$.
Then $\Br(k)\rightarrow \Br(F_r(X))$ is surjective.
\end{cor}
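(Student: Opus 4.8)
The plan is to apply the Hochschild--Serre spectral sequence $H^p(k,H^q(Y_{\kbar},\mathbb G_m))\Rightarrow H^{p+q}(Y,\mathbb G_m)$ to $Y:=F_r(X)$, which is smooth, projective, and geometrically connected by Lemma~\ref{lem:Kuznetsov_fanoscheme}; indeed $\dim Y=(r+1)(N-2r-2)$, and the hypothesis $r<\frac N2-2$ forces $N-2r-2\geq 3$, so $\dim Y\geq 3$. The associated exact sequence of low-degree terms contains the segment
\[
\Br(k)\longrightarrow\ker\!\bigl(\Br(Y)\to\Br(Y_{\kbar})\bigr)\longrightarrow H^1\!\bigl(k,\Pic(Y_{\kbar})\bigr),
\]
so it suffices to show that (i) $\Br(Y_{\kbar})=0$, so that the middle term equals $\Br(Y)$, and (ii) $H^1(k,\Pic(Y_{\kbar}))=0$, so that $\Br(k)$ surjects onto it. For (i): by Corollary~\ref{thm:fano-scheme-rational}\eqref{item:Ci}, $Y_{\kbar}$ is a smooth projective \emph{rational} variety over $\kbar$, and $\chara k=0$ by hypothesis; since the Brauer group of a smooth proper variety over a field is a birational invariant (via the blow-up formula together with weak factorization in characteristic $0$) and vanishes for projective space, we get $\Br(Y_{\kbar})=0$.

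For (ii): recall, as noted above from \cite{DebarreManivel98}, that $Y=F_r(X)$ is a Fano variety. In characteristic $0$ a Fano variety is rationally connected, hence simply connected, and $H^1(Y_{\kbar},\mathcal O_Y)=H^2(Y_{\kbar},\mathcal O_Y)=0$ by Kodaira--Nakano vanishing; consequently $\Pic(Y_{\kbar})$ is a finitely generated \emph{free} abelian group. I would then pin it down as $\Pic(Y_{\kbar})\cong\mathbb Z$, generated by the restriction of the Plücker hyperplane class: realize $F_r(X)\subset G(r+1,N+1)$ as the zero locus --- smooth and of the expected dimension $\dim Y\geq 3$ --- of a section of $\Sym^2\mathcal S^\vee\oplus\Sym^2\mathcal S^\vee$, where $\mathcal S$ denotes the tautological subbundle, and apply the Lefschetz-type theorem for Fano schemes of complete intersections \cite{DebarreManivel98}, which gives $\Pic(Y_{\kbar})\cong\Pic(G(r+1,N+1)_{\kbar})\cong\mathbb Z$. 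Since this generator is defined over $k$, the absolute Galois group $G_k$ acts trivially on $\Pic(Y_{\kbar})\cong\mathbb Z$, whence $H^1(k,\Pic(Y_{\kbar}))=\Hom_{\mathrm{cont}}(G_k,\mathbb Z)=0$. Combined with (i) and the displayed sequence, this proves the corollary.

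The delicate point is (ii), specifically the identification $\Pic(F_r(X)_{\kbar})\cong\mathbb Z$: one must ensure the relevant Lefschetz theorem applies to \emph{every} smooth complete intersection of two quadrics in the stated range, not merely to a general complete intersection of multidegree $(2,2)$. This is where Lemma~\ref{lem:Kuznetsov_fanoscheme} is essential --- it guarantees that $F_r(X)$ is smooth of the expected dimension for \emph{all} such $X$, which (together with $\dim F_r(X)\geq 3$) is all that the Lefschetz statement requires. An alternative would be to extract $b_2(F_r(X))=1$ from the $k$-birational equivalence $F_r(X)\sim_k\Sym^{r+1}\mathcal Q^{(r)}$ of Theorem~\ref{thm:symmetric}, but the Fano/Lefschetz route is cleaner. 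The rest of the argument is formal.
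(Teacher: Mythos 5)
Your proposal is correct and follows essentially the same route as the paper: the low-degree exact sequence $\Br(k)\to\Ker(\Br(F_r(X))\to\Br(F_r(X)_{\kbar}))\to H^1(k,\Pic(F_r(X)_{\kbar}))$, vanishing of $\Br(F_r(X)_{\kbar})$ via the $\kbar$-rationality from Corollary~\ref{thm:fano-scheme-rational}\eqref{item:Ci}, and $\Pic(F_r(X)_{\kbar})\cong\Z$ with trivial Galois action via Debarre--Manivel. The paper simply cites \cite[Corollaire 3.5]{DebarreManivel98} directly for the Picard computation (valid for all smooth $X$ in this range), so your extra Fano/Lefschetz justification is compatible but not needed beyond that citation.
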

\begin{proof}
The Leray spectral sequence yields an exact sequence:
\[
\Br(k)\rightarrow  \Ker(\Br(F_r(X))\rightarrow \Br(F_r(X_{\kbar})))\rightarrow H^1(k,\Pic(F_r(X_\kbar))).
\]
Corollary \ref{cor:fano-scheme-rational}\eqref{item:Ci} shows that $F_r(X_{\kbar})$ is rational, hence $\Br(F_r(X_{\kbar}))=0$.
Moreover, we have $\Pic(F_r(X_{\kbar}))=\Z$ by \cite[Corollaire 3.5]{DebarreManivel98}, where the Galois group acts trivially, thus $H^1(k,\Pic(F_r(X_\kbar)))=0$.
The result is now immediate from the above exact sequence.
\end{proof}

\begin{rems}\label{CT-remark} \hfill
\begin{enumerate}
\item A conjecture of Colliot-Th\'el\`ene states that the Brauer--Manin obstruction is the only obstruction to the Hasse principle for smooth projective rationally connected varieties over a number field.
By Corollary \ref{cor:brauer-surjective}, the conjecture predicts the Hasse principle for rational points on $F_r(X)$ for smooth complete intersections of two quadrics $X\subset \P^N$ and $0\leq r<\frac{N}{2}-2$.
\item 
If the assumptions on \(r\) in Corollary~\ref{cor:brauer-surjective} do not hold, then in general the Hasse principle fails for \(F_r(X)\). For \(N=2g\), \(r=g-2\), and \(g=2\), degree 4 del Pezzo surfaces do not satisfy the Hasse principle \cite{BSD-dP4}. For \(N=2g+1\) and \(r=g-1\), this is related to the Tate--Shafarevich group of the Jacobian of \(C\), see \cite[Section 8]{BGW} (see also \cite{FreiJi} for an explicit example when \(g=2\)).
\item 
For \(0\leq r\leq \lfloor \frac{N}{2}\rfloor -4\), the proof of Corollary~\ref{cor:fano-scheme-rational}\eqref{item:p-adic} shows that \(F_r(X)(\bb Q_p)\neq\emptyset\). However, \(F_r(X)\) may not have \(\bb R\)-points in general; see Lemma~\ref{lem:width-frequency}\eqref{item:fano-width}.
\end{enumerate}
\end{rems}

The following result generalizes the \(r=0\) case in \cite[Theorem A(ii)]{CTSSD}:

\begin{cor}\label{cor:weak-approximation-fano}
    Over a number field $k$, let $X$ be a smooth complete intersection of two quadrics in $\P^N$. Let $0\leq r<\frac{N}{2}-2$. If \(F_r(X)(k)\neq\emptyset\), then weak approximation holds for \(F_r(X)\).
\end{cor}

\begin{proof}
    The assumptions imply that \(\cal Q^{(r)}\) is defined and that the quadric fibration \(\phi^{(r)}\colon \cal Q^{(r)}\to\bb P^1\) has relative dimension \(\geq 2\); in particular, \(\cal Q^{(r)}\) is smooth and geometrically integral (Lemma~\ref{lem:hyperbolic-reduction}).
    Then \cite[Proposition 3.9]{CTSSD} (in the relative dimension \(\geq 3\) case) and \cite[Theorem 3.10]{CTSSD} (in the relative dimension \(2\) case; note that the assumptions of the theorem are satisfied by Lemma~\ref{lem:hyperbolic-reduction}\eqref{item:hyperbolic-reduction-pencil}
    and Lemma~\ref{lem:quadric-bundle-simple-degeneration}) 
    show that \(\cal Q^{(r)}\) satisfies weak approximation (and the Hasse principle).
    By Theorem~\ref{thm:symmetric}, \cite[Theorem 1.3]{ChenZhang23}, and Corollary~\ref{cor:brauer-surjective}, this implies that \(F_r(X)\) satisfies weak approximation.
\end{proof}

\section{
The odd-dimensional case
}\label{section-maximal}
We now focus on the odd-dimensional case and prove Theorem~\ref{maximallinearspace}.
Throughout this section,
fix $g\geq 2$ and let $X$ be a smooth complete intersection of two quadrics in $\P^{2g+1}$.
By Lemma~\ref{lem:Kuznetsov_fanoscheme}, the maximal linear subspaces on \(X\) are \((g-1)\)-planes. The Stein factorization of the relative Fano scheme $F_{g}(\mathcal{Q}/\P^1)\rightarrow \P^1$ yields a hyperelliptic curve $C$ of genus $g$.
Before starting the proof of Theorem~\ref{maximallinearspace}, we outline the main ideas.

For arbitrary \(g \geq 2\), the hyperbolic reduction \(\cal Q^{(g-2)}\) is a threefold. We show its intermediate Jacobian (Section~\ref{sec:CH2-scheme}) is the Jacobian of \(C\). A key result we need to prove is to identify \(F_{g-1}(X)\) with an intermediate Jacobian torsor given by a certain algebraic curve class on \(\cal Q^{(g-2)}\) (Proposition~\ref{prop-fanoch2s}), which we do by studying actions of correspondences coming from hyperbolic reductions. Using the explicit description of the curve classes on \(\cal Q^{(g-2)}\), we prove directly that \(2[F_{g-1}(X)]=[\bPic^1_{C/k}]\) (Proposition~\ref{prop-pic1ch2f} is an important step toward this).
To prove Theorem~\ref{maximallinearspace},
if the threefold \(\cal Q^{(g-2)}\) is \(k\)-rational, then the vanishing of the intermediate Jacobian torsor obstruction over \(k\) (Theorem~\ref{thm-BWIJ}) implies that the torsor \(F_{g-1}(X)\) is \(\bPic^d_{C/k}\) for some \(d\). Combining this with the property that \(2[F_{g-1}(X)]=[\bPic^1_{C/k}]\) shows \(F_{g-1}(X)(k)\neq\emptyset\).

When \(g=2\), \(\cal Q^{(0)}\) is \(k\)-birational to the threefold \(X \subset \bb P^5\). In this case, Hassett--Tschinkel and Benoist--Wittenberg showed that \(k\)-rationality of \(X\) implies \(F_1(X)(k)\neq\emptyset\) \cite{HT-intersection-quadrics,BW-IJ}. In their case, the universal line on \(X\) can be used to show that \(F_1(X)\) is an intermediate Jacobian torsor, and a result of Wang shows that \(2[F_1(X)]=[\bPic^1_{C/k}]\) as torsors \cite{Wang-maximal-linear-spaces}. In the present paper, we do not rely on Wang's result to prove Theorem~\ref{maximallinearspace}, as we give a direct argument that \(2[F_{g-1}(X)]=[\bPic^1_{C/k}]\). In fact, using Wang's result does not significantly simplify the proof of Theorem~\ref{maximallinearspace}, because we still need to prove that \(F_{g-1}(X)\) is an intermediate Jacobian torsor for arbitrary \(g\) (Proposition~\ref{prop-fanoch2s}).

\subsection{Preparation for the proof of Theorem~\ref{maximallinearspace}}
First, we consider the Fano scheme \(F_{g-1}(X)\) of maximal linear spaces.
By \cite[Theorem 1.1]{Wang-maximal-linear-spaces},
$F_{g-1}(X)$ is a torsor under $\bPic^0_{C/k}$.
The following weaker statement, which follows from a theorem of Reid \cite[Theorem 4.8]{Reid-thesis}, is enough for our application.

\begin{lem}[Corollary of {\cite[Theorem 4.8]{Reid-thesis}}]\label{lem:fanotorsor}
$F_{g-1}(X)$ is a torsor under $\bAlb_{F_{g-1}(X)/k}$.
In particular, $\dim \bAlb_{F_{g-1}(X)/k}=g$.
\end{lem}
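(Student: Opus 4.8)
The plan is to base change to $\kbar$, apply Reid's identification of the maximal linear spaces there, and then descend. First, by Lemma~\ref{lem:Kuznetsov_fanoscheme}, $F_{g-1}(X)$ is smooth, projective and geometrically connected (hence geometrically integral) over $k$, of dimension $(r+1)(N-2r-2)=g$ with $N=2g+1$ and $r=g-1$. Consequently its Albanese variety $A\coloneqq\bAlb_{F_{g-1}(X)/k}$ is an abelian variety over $k$, there is an Albanese torsor $T$ under $A$ equipped with a canonical $k$-morphism $a\colon F_{g-1}(X)\to T$, and the formation of $A$, $T$ and $a$ is compatible with the field extension $k\hookrightarrow\kbar$; for our variety this last point causes no trouble in positive characteristic, since (by the next step) $F_{g-1}(X_{\kbar})$ is an abelian variety and hence $\bPic^0_{F_{g-1}(X)/k}$ is already an abelian variety.

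Next I would invoke Reid's theorem: by \cite[Theorem 4.8]{Reid-thesis} (the observation of Weil recalled in the introduction) there is an isomorphism $F_{g-1}(X_{\kbar})\xrightarrow{\ \sim\ }\Jac(C_{\kbar})$, so $F_{g-1}(X_{\kbar})$ is a $g$-dimensional abelian variety. Since the Albanese morphism of an abelian variety is an isomorphism and its Albanese variety has the same dimension, $a_{\kbar}$ is an isomorphism and $\dim\bAlb_{F_{g-1}(X_{\kbar})/\kbar}=g$. By base-change compatibility, $\dim A=g$, and $a$ becomes an isomorphism after the faithfully flat base change to $\kbar$, hence $a$ is an isomorphism over $k$. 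Therefore $F_{g-1}(X)\cong T$ is a torsor under $A=\bAlb_{F_{g-1}(X)/k}$, and $\dim A=\dim F_{g-1}(X)=g$.

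I expect the only delicate point to be citing clean references for (i) the existence and base-change compatibility of the Albanese torsor over an arbitrary field, and (ii) the fact that a smooth projective variety which is geometrically an abelian variety has invertible Albanese morphism; both are standard. An equivalent route, which avoids an explicit discussion of $T$, is to take as input the consequence of \cite[Theorem 4.8]{Reid-thesis} already used in Corollary~\ref{thm:fano-scheme-rational-finite}, namely that $F_{g-1}(X)$ is a torsor under \emph{some} abelian variety $A'$ over $k$, and then observe that the translation action of $A'$ induces a homomorphism $A'\to\bAlb_{F_{g-1}(X)/k}$ of abelian varieties which is an isomorphism because it is one over $\kbar$; this identifies $A'$ with $\bAlb_{F_{g-1}(X)/k}$ and gives both assertions at once.
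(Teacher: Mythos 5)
Your main argument is essentially the paper's own proof: the paper also reduces, via the universal property and base-change compatibility of the canonical map $F_{g-1}(X)\to\bAlb^1_{F_{g-1}(X)/k}$ (following Wittenberg's treatment of Albanese torsors), to the single geometric fact that $F_{g-1}(X_{\kbar})$ is an abelian variety, which is supplied by Reid's Theorem~4.8 identifying it with $\bPic^0_{C_{\kbar}/\kbar}$. Only your suggested alternative route is shaky, since Reid's theorem is a statement over $\kbar$ and does not by itself furnish a torsor structure under an abelian variety over $k$ (that is precisely what this lemma establishes; over $k$ one would instead need Wang's theorem, which the paper deliberately avoids), but your primary argument does not depend on it.
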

\begin{proof}
We will show that the natural map $F_{g-1}(X)\rightarrow \bAlb^1_{F_{g-1}(X)/k}$ is an isomorphism.
(See \cite[Section 2]{wittenberg08} for the definition and basic properties of the Albanese torsor $\bAlb^1_{F_{g-1}(X)/k}$.)
By the universality of the map $F_{g-1}(X)\rightarrow \bAlb^1_{F_{g-1}(X)/k}$, it is enough for us to show that $F_{g-1}(X_{\kbar})$ is isomorphic to an abelian variety over $\kbar$.
This follows from the fact that $F_{g-1}(X_{\kbar})$ is isomorphic to $\bPic_{C_{\kbar}/\kbar}^0$ by \cite[Theorem 4.8]{Reid-thesis}.
\end{proof}

For the remainder of this section, fix \(0\leq r \leq g-1\) and assume $F_r(X)(k)\neq \emptyset$. Choose $\ell\in F_r(X)(k)$, and let $\mathcal{Q}^{(r)}$ and \(E^{(r)}\) be as defined in Section~\ref{sec:Q^(r)-construction}.
We study the action $(E^{(r)})_*$ on algebraic cycles.
For this purpose, it is useful to 
regard the hyperbolic reduction with respect to $\ell$ as iterations of the hyperbolic reduction with respect to a point, as follows.

Choose $k$-points $p_0,\cdots, p_r \in \ell$ with $\langle p_0,\cdots, p_r\rangle =\ell$
and coordinate points for $\P^{2g+1}$ that extend $p_0,\cdots, p_r$,
so that we get coordinate expressions of the key varieties as in Section~\ref{section-fano-hyperbolic}.
Let $l_{0j}, q_0, l_{1j}, q_1\in k[x_{r+1}, \dots, x_{2g+1}]$ be the forms defined in Section~\ref{sec:Q^(r)-construction}.
For each $0 \leq i \leq r$, let
\[
\overline{\mathcal{P}^{(i-1)}}\coloneqq
\left\{
\begin{pmatrix}
s & t
\end{pmatrix}
\begin{pmatrix}
l_{00} & \dots &l_{0 , i-1}\\
l_{10} & \dots & l_{1 , i-1}
\end{pmatrix}
=0
\right\}
\subset \P^1\times \P^{2g-i} ,
\]
where the
coordinates on \(\mathbb P^{2g-i}\) are \([y_{i+1}:\cdots:y_{2g+1}]\).
Note that \(\overline{\mathcal{P}^{(i-1)}}\) is the image of \(\mathcal P^{(i-1)}\subset\mathbb P^1\times\mathbb P^{2g-i+1}\) under \(\id\times \pi_{p_i}\), which informs this choice of notation. Then, the hyperbolic reduction $\mathcal{Q}^{(i)}$ of $\phi\colon\mathcal{Q}\rightarrow \P^1$ with respect to $\ell_i \coloneqq \langle p_0,\cdots, p_i\rangle$ is given by
\[\mathcal{Q}^{(i)}
=
\left\{
\begin{pmatrix}
s & t
\end{pmatrix}
\begin{pmatrix}
l_{00} & \dots &l_{0i} & q_0 + \sum_{j=i+1}^r l_{0j} y_j \\
l_{10} & \dots & l_{1i} & q_1 + \sum_{j=i+1}^r l_{1j} y_j
\end{pmatrix}
=0
\right\}
\subset \P^1\times \P^{2g-i}.\]
We have a blow-up diagram (see \cite[Remark 2.6]{KuznetsovShinder18})
\[
\begin{tikzcd}
& \widetilde{\mathcal{Q}^{(i-1)}} \arrow[ld, "\bl_{\P^1\times p_{i}}"'] \arrow[rd, "\bl_{\mathcal{Q}^{(i)}}"] & \arrow[l, hook'] E_{i} \arrow[rd, "\pi_i"] &\\
\mathcal{Q}^{(i-1)} \arrow[rr, "\id\times \pi_{p_{i}}", dashed]& & \overline{\mathcal{P}^{(i-1)}} & \arrow[l, hook']\mathcal{Q}^{(i)}    , 
\end{tikzcd}
\]
where $E_{i}$ is the exceptional divisor of $\bl_{\mathcal{Q}^{(i)}}$
and $\pi_i\colon E_i\rightarrow \mathcal{Q}^{(i)}$ is the projection.

\begin{lem}\label{lem:chow-onestep-reduction}
The following statements hold.
\begin{enumerate}
\item\label{item:chow-onestep-reduction-exc-corresp} $E^{(r)}=E_r\circ\dots \circ E_0$ as correspondences, where we regard $E^{(r)}$ (resp. $E_i$) as a correspondence on $\mathcal{Q}\times \mathcal{Q}^{(r)}$ (resp. as a correspondence on $\mathcal{Q}^{(i-1)}\times \mathcal{Q}^{(i)}$).
\item\label{item:chow-onestep-reduction-CHQ} 
For every $0 \leq i \leq r$, there is an isomorphism
\[
(E_i)_*\colon \CH^{g-i+1}(\mathcal{Q}^{(i-1)})_{\alg}\xrightarrow{\sim}\CH^{g-i}(\mathcal{Q}^{(i)})_{\alg}, \quad \alpha \mapsto (\pi_i)_*((\bl_{\P^1\times p_i}^*\alpha)|_{E_i}).
\]
The inverse, which we denote by $-(E_i^T)_*$, is given by 
$\beta \mapsto -(\bl_{\P^1\times p_i})_*(\pi_i)^*\beta$.
\end{enumerate}
\end{lem}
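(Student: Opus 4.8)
\textbf{Proof proposal for Lemma~\ref{lem:chow-onestep-reduction}.}

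The plan is to analyze a single step of hyperbolic reduction along a $k$-point and then iterate. First I would set up the geometry of the elementary step: by the blow-up diagram for $\bl_{\P^1\times p_i}$ and $\bl_{\mathcal Q^{(i)}}$, the exceptional divisor $E_i$ of the second blow-up is a $\P^{1}$-bundle over $\mathcal Q^{(i)}$ (the fibers being the lines through $p_i$ swept out by the linear projection $\pi_{p_i}$, intersected with the quadric fibration), and $\pi_i\colon E_i\to\mathcal Q^{(i)}$ is that projection. For part~\eqref{item:chow-onestep-reduction-exc-corresp}, I would unwind the definition of $E^{(r)}$ from Section~\ref{sec:Q^(r)-construction}: there $E^{(r)}=\P_{\mathcal Q^{(r)}}(\O^{\oplus r+1}\oplus\O(0,1))$ and $\pi^{(r)}$ is its projection to $\mathcal Q^{(r)}$. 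The claim $E^{(r)}=E_r\circ\cdots\circ E_0$ as correspondences amounts to checking that the composition of the correspondences $E_i\subset \mathcal Q^{(i-1)}\times\mathcal Q^{(i)}$ (viewed via the two blow-down maps) realizes exactly the incidence between $\mathcal Q=\mathcal Q^{(-1)}$ and $\mathcal Q^{(r)}$ given by containment of the fiberwise $(r+1)$-planes through $\ell$; this is a bookkeeping computation with the coordinate equations, using that $\ell_i=\langle p_0,\dots,p_i\rangle$ and that each step strips off one coordinate direction, together with the standard fact (e.g. \cite[Remark 2.6]{KuznetsovShinder18}) that iterated hyperbolic reduction along points equals hyperbolic reduction along their span. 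Compatibility of composition of correspondences with blow-up/blow-down is the key formal input.

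For part~\eqref{item:chow-onestep-reduction-CHQ}, I would argue dimension by dimension. The map $(E_i)_*$ is the composition $\CH^{g-i+1}(\mathcal Q^{(i-1)})\xrightarrow{\bl^*_{\P^1\times p_i}}\CH^{g-i+1}(\widetilde{\mathcal Q^{(i-1)}})\xrightarrow{(-)|_{E_i}}\CH^{g-i+1}(E_i)\xrightarrow{(\pi_i)_*}\CH^{g-i}(\mathcal Q^{(i)})$, and these all preserve algebraic triviality, so $(E_i)_*$ restricts to algebraically trivial cycles. To see it is an isomorphism there, I would use the blow-up formulas for Chow groups: $\CH^\bullet(\widetilde{\mathcal Q^{(i-1)}})$ decomposes via $\bl_{\P^1\times p_i}$ into $\CH^\bullet(\mathcal Q^{(i-1)})$ plus a contribution from the exceptional divisor of $\bl_{\P^1\times p_i}$, and also via $\bl_{\mathcal Q^{(i)}}$ into $\CH^\bullet(\mathcal Q^{(i)})$ plus a contribution from $E_i$ (which is a projective bundle over $\mathcal Q^{(i)}$). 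Matching these two decompositions, and noting that $\P^1\times p_i$ is rational so its Chow groups carry no algebraically trivial classes, one gets $\CH^{g-i+1}(\mathcal Q^{(i-1)})_{\alg}\cong\CH^{g-i+1}(\widetilde{\mathcal Q^{(i-1)}})_{\alg}\cong\CH^{g-i}(\mathcal Q^{(i)})_{\alg}$, and I would check that the explicit composite above is precisely this isomorphism. For the inverse, the stated formula $\beta\mapsto -(\bl_{\P^1\times p_i})_*(\pi_i)^*\beta$ should be verified by composing with $(E_i)_*$ and using the projection formula together with the fact that on $E_i$ (a $\P^1$-bundle over $\mathcal Q^{(i)}$) the self-intersection of $E_i$ acts as $-1$ on the relevant graded piece after pushing forward — the sign is exactly the $-\xi$ coming from the normal bundle $\O_{E_i}(E_i)$.

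The main obstacle I anticipate is the careful handling of the two blow-ups of the \emph{same} threefold-family $\widetilde{\mathcal Q^{(i-1)}}$ from two different centers ($\P^1\times p_i$ on one side, $\mathcal Q^{(i)}$ on the other), and keeping the codimension indices and the normal bundle signs straight when transferring algebraically trivial classes back and forth; in particular, verifying that $-(\bl_{\P^1\times p_i})_*(\pi_i)^*$ is a genuine two-sided inverse (not merely a one-sided one) will require the precise projective-bundle and blow-up formulas and the observation that all the "error terms" live on rational varieties and hence vanish after passing to $(-)_{\alg}$. Once the one-step statement is nailed down, part~\eqref{item:chow-onestep-reduction-exc-corresp} feeds into iterating it, giving the composite isomorphism $(E^{(r)})_*=(E_r)_*\circ\cdots\circ(E_0)_*$ that will be used in the sequel.
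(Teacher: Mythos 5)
Your strategy is essentially the paper's: for~\eqref{item:chow-onestep-reduction-exc-corresp} one interprets $E^{(i)}\subset\mathcal{Q}\times\mathcal{Q}^{(i)}$ and $E_i\subset\mathcal{Q}^{(i-1)}\times\mathcal{Q}^{(i)}$ as incidence correspondences (points lying on the fiberwise isotropic planes, resp.\ lines, through $\ell_i$, resp.\ $p_i$) and checks $E^{(i)}=E_i\circ E^{(i-1)}$ inductively; for~\eqref{item:chow-onestep-reduction-CHQ} one plays the two blow-up decompositions of $\CH^{g-i+1}(\widetilde{\mathcal{Q}^{(i-1)}})$ against each other, with the sign in the inverse coming from $\O_{E_i}(E_i)$ on the $\P^1$-bundle $\pi_i\colon E_i\to\mathcal{Q}^{(i)}$. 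That part of your write-up matches the paper's proof.

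There is, however, one step whose justification is wrong as stated. The second blow-down is $\bl_{\mathcal{Q}^{(i)}}\colon\widetilde{\mathcal{Q}^{(i-1)}}\to\overline{\mathcal{P}^{(i)}}$, with center $\mathcal{Q}^{(i)}$ of codimension $2$, so the corresponding decomposition is $\CH^{g-i+1}(\overline{\mathcal{P}^{(i)}})\oplus\CH^{g-i}(\mathcal{Q}^{(i)})$; your phrasing (``into $\CH^{\bullet}(\mathcal{Q}^{(i)})$ plus a contribution from $E_i$'') omits the summand $\CH^{g-i+1}(\overline{\mathcal{P}^{(i)}})$ altogether, and this is exactly the summand that must be killed after passing to algebraically trivial classes. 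You propose to kill all such ``error terms'' by the principle that they ``live on rational varieties and hence vanish after passing to $(-)_{\alg}$'' --- but that principle is false: a (geometrically) rational threefold typically has nonzero $\CH^2_{\alg}$, namely its intermediate Jacobian, which is precisely the invariant this paper exploits for $\mathcal{Q}^{(g-2)}$ in Section~\ref{section-maximal}. The correct reason, and the one the paper uses, is structural: the extra summands are $\CH^0(\P^1)\oplus\CH^1(\P^1)$ (from the center $\P^1\times p_i$) and $\CH^{g-i+1}(\overline{\mathcal{P}^{(i)}})$, and $\overline{\mathcal{P}^{(i)}}$ is a projective bundle over $\P^1$, so by the projective bundle formula all of these groups are extensions of copies of $\CH^0(\P^1)$ and $\CH^1(\P^1)$ and have vanishing algebraically trivial part. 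With that substitution, your identification of the projectors $(\bl_{\P^1\times p_i})_*$ and $-(\pi_i)_*((-)|_{E_i})$ and the two-sided inverse $-(\bl_{\P^1\times p_i})_*(\pi_i)^*$ goes through exactly as in the paper.
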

\begin{proof}
\eqref{item:chow-onestep-reduction-exc-corresp}:
Using Lemma~\ref{lem:hyperbolic-reduction}\eqref{item:hyperbolic-reduction-embedding} to identify $\mathcal{Q}^{(i)}$ with the relative Fano scheme of isotropic $(i+1)$-planes of $\phi\colon\mathcal{Q}\rightarrow \P^1$ containing $\ell_i=\langle p_0,\dots, p_i\rangle$, we may regard $E^{(i)}$ as a subscheme of $\mathcal{Q}\times \mathcal{Q}^{(i)}$ as follows:
\[E^{(i)}=\left\{(x,L)\mid x \in L\right\}\subset \mathcal{Q}\times_{\P^1} \mathcal{Q}^{(i)}\subset \mathcal{Q}\times \mathcal{Q}^{(i)}.\]
Similarly, identifying $\mathcal{Q}^{(i)}$ with the relative Fano scheme of isotropic lines of $\phi^{(i-1)}\colon\mathcal{Q}^{(i-1)}\rightarrow \P^1$ containing $p_i$,
we may regard $E_i$ as a subscheme of $\mathcal{Q}^{(i-1)}\times \mathcal{Q}^{(i)}$ as follows:
\[
E_i=\left\{(y, m)\mid y\in m \right\}\subset \mathcal{Q}^{(i-1)}\times_{\P^1}
\mathcal{Q}^{(i)}\subset \mathcal{Q}^{(i-1)}\times \mathcal{Q}^{(i)}.\]
A line on a fiber of $\phi^{(i-1)}\colon\mathcal{Q}^{(i-1)}\rightarrow \P^1$ and containing $p_i$ corresponds to an $(i+1)$-plane on a fiber of $\phi\colon\mathcal{Q}\rightarrow \P^1$ and containing $\ell_i$, so $E^{(i)}=E_i\circ E^{(i-1)}$.
Since \(E^{(0)} = E_0\), we have $E^{(r)}=E_r\circ E^{(r-1)}=\dots =E_r\circ\dots\circ E_0$.

\eqref{item:chow-onestep-reduction-CHQ}:
For $0\leq i\leq g-2$,
the blow-up formula for Chow groups implies
\begin{align*}
\CH^{g-i+1}(\widetilde{\mathcal{Q}^{(i-1)}})
&=\CH^{g-i+1}(\mathcal{Q}^{(i-1)})\oplus \CH^0(\P^1)\oplus \CH^1(\P^1)\\
&=\CH^{g-i+1}(\overline{\mathcal{P}^{(i-1)}})\oplus \CH^{g-i}(\mathcal{Q}^{(i)}),
\end{align*}
where $(\bl_{\P^1\times p_i})_*?$ and $-\pi_*(?|_{E_i})$ respectively define the projectors onto $\CH^{g-i+1}(\mathcal{Q}^{(i-1)})$ and $\CH^{g-i}(\mathcal{Q}^{(i)})$.
For $i=g-1$,
we have the same decomposition except that $\CH^0(\P^1)$ does not appear due to dimension reasons.
Finally, note that $\CH^{0}(\P^1)_{\alg} = \CH^1(\P^1)_{\alg} = \CH^{g-i+1}(\overline{\mathcal{P}^{(i-1)}})_{\alg}=0$ since \(\overline{\mathcal{P}^{(i-1)}}\) is a projective bundle over \(\bb P^1\).
\end{proof}

In particular, Lemma~\ref{lem:chow-onestep-reduction} implies:
\begin{cor}\label{cor:Er-action-on-alg}
There is an isomorphism
\[
(E^{(r)})_*\colon \CH^{g+1}(\mathcal{Q})_{\alg}\xrightarrow{\sim} \CH^{g-r}(\mathcal{Q}^{(r)})_{\alg}
\]
with inverse given by $(-1)^{r+1}(E^{(r)T})_*$.
\end{cor}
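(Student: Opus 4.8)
The plan is to obtain Corollary~\ref{cor:Er-action-on-alg} purely by iterating Lemma~\ref{lem:chow-onestep-reduction}; no further geometric input is required. Applying part~\eqref{item:chow-onestep-reduction-CHQ} of that lemma for $i=0,1,\dots,r$ and using the convention $\mathcal{Q}^{(-1)}=\mathcal{Q}$, the target $\CH^{g-i}(\mathcal{Q}^{(i)})_{\alg}$ of the $i$-th isomorphism $(E_i)_*$ is the source of the $(i+1)$-st, so the composite
\[
(E_r)_*\circ\cdots\circ(E_0)_*\colon \CH^{g+1}(\mathcal{Q})_{\alg} \xrightarrow{\sim} \CH^{g-r}(\mathcal{Q}^{(r)})_{\alg}
\]
is an isomorphism (each of the $r+1$ steps lowers codimension by one, taking $g+1$ down to $g-r$), and its inverse is $(-(E_0^T)_*)\circ\cdots\circ(-(E_r^T)_*)$.

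It then remains to identify this composite with $(E^{(r)})_*$ and its inverse with $(-1)^{r+1}(E^{(r)T})_*$. For this I would invoke Lemma~\ref{lem:chow-onestep-reduction}\eqref{item:chow-onestep-reduction-exc-corresp}, namely $E^{(r)}=E_r\circ\cdots\circ E_0$ as correspondences in $\mathcal{Q}\times\mathcal{Q}^{(r)}$, together with the standard functoriality of the push-forward action of correspondences on Chow groups: $(\beta\circ\alpha)_*=\beta_*\circ\alpha_*$ and $(\beta\circ\alpha)^T=\alpha^T\circ\beta^T$. The first identity gives $(E^{(r)})_*=(E_r)_*\circ\cdots\circ(E_0)_*$, the isomorphism just constructed. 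For the inverse, the second identity yields $(E^{(r)})^T=E_0^T\circ\cdots\circ E_r^T$, hence $(E^{(r)T})_*=(E_0^T)_*\circ\cdots\circ(E_r^T)_*$, and pulling the $r+1$ signs out of $(-(E_0^T)_*)\circ\cdots\circ(-(E_r^T)_*)$ gives precisely $(-1)^{r+1}(E^{(r)T})_*$.

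I do not expect a genuine obstacle here, since the argument is bookkeeping: one must only track the codimension indices through the $r+1$ stages and keep the sign $(-1)^{r+1}$ and the order-reversal under transposition straight. The one point that deserves an explicit sentence is the compatibility of the explicit formula used to define $(E_i)_*$ (and $(E^{(r)})_*$) with the abstract composition of correspondences; this follows from the projection formula once one observes, as in the proof of Lemma~\ref{lem:chow-onestep-reduction}\eqref{item:chow-onestep-reduction-exc-corresp}, that the $E_i$ and $E^{(r)}$ are realized as incidence subvarieties (closed subschemes) of the relevant products, so that the correspondence action of $[E^{(r)}]$ is indeed $\alpha\mapsto(\pi^{(r)})_*\bigl((\mathrm{bl}^*\alpha)|_{E^{(r)}}\bigr)$.
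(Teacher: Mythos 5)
Your proposal is correct and is essentially identical to the paper's argument: the corollary is stated there as an immediate consequence of Lemma~\ref{lem:chow-onestep-reduction}, obtained by composing the isomorphisms $(E_i)_*$ for $i=0,\dots,r$ and using $E^{(r)}=E_r\circ\cdots\circ E_0$ together with functoriality of correspondences to identify the composite with $(E^{(r)})_*$ and its inverse with $(-1)^{r+1}(E^{(r)T})_*$. Your bookkeeping of the codimension indices and the sign is exactly what the paper leaves implicit.
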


\subsection{Proof of Theorem~\ref{maximallinearspace}}
In the following, we assume $F_{g-2}(X)(k)\neq \emptyset$.
For any $\ell\in F_{g-2}(X)(k)$, Lemma~\ref{lem:hyperbolic-reduction}\eqref{item:hyperbolic-reduction-smooth} and~\eqref{item:hyperbolic-reduction-independence}
show that $\mathcal{Q}^{(g-2)}$ is a smooth projective geometrically connected threefold,
whose $k$-birational equivalence class does not depend on $\ell$.
We start by showing that the $k$-isomorphism class of the intermediate Jacobian
$(\bCH^2_{\mathcal{Q}^{(g-2)}/k})^0$ (see Section~\ref{sec:CH2-scheme}) does not depend on $\ell$.

\begin{lem}\label{lem:ch2-ll'-isom}
For $\ell, \ell'\in F_{g-2}(X)(k)$, let $\mathcal{Q}^{(g-2)}_\ell, \mathcal{Q}^{(g-2)}_{\ell'}$ denote the corresponding hyperbolic reductions of $\mathcal{Q}$,
and let $E^{(g-2)}_{\ell}, E^{(g-2)}_{\ell'}$ denote the exceptional subschemes.
Then the composition $(-1)^{g-1}E^{(g-2)}_{\ell'}\circ E^{(g-2)T}_\ell$
induces an isomorphism of
abelian varieties
\[
(\bCH^2_{\mathcal{Q}^{(g-2)}_\ell/k})^0\xrightarrow{\sim}(\bCH^2_{\mathcal{Q}^{(g-2)}_{\ell'}/k})^0.
\]
\end{lem}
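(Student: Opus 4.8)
\emph{Proof plan.} The approach is to realise the asserted isomorphism as the action, on algebraically trivial codimension-two cycles, of a correspondence defined over \(k\), and then to upgrade this action to a morphism of abelian varieties over \(k\) within the Benoist--Wittenberg formalism of Section~\ref{sec:CH2-scheme}. First I would check that both threefolds \(Y\coloneqq\mathcal{Q}^{(g-2)}_\ell\) and \(Y'\coloneqq\mathcal{Q}^{(g-2)}_{\ell'}\) satisfy the hypotheses of Section~\ref{sec:CH2-scheme}: smoothness, properness and geometric connectedness follow from Lemma~\ref{lem:hyperbolic-reduction}\eqref{item:hyperbolic-reduction-smooth} (here \(N-2r-2=3>0\)), and geometric rationality follows by applying Proposition~\ref{HT} over \(\kbar\): since \(F_{g-1}(X)(\kbar)\neq\emptyset\), the fibration \(\phi^{(g-2)}\colon\mathcal{Q}^{(g-2)}\to\P^1\) acquires a section over \(\kbar\), so \(Y_\kbar\) and \(Y'_\kbar\) are rational. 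Thus \(\bCH^2_{Y/k}\) and \(\bCH^2_{Y'/k}\), together with their abelian variety identity components, are defined. Since \(\ell,\ell'\in F_{g-2}(X)(k)\), the exceptional subschemes \(E^{(g-2)}_\ell\subset\mathcal{Q}\times Y\) and \(E^{(g-2)}_{\ell'}\subset\mathcal{Q}\times Y'\) are defined over \(k\), hence so is the correspondence \(\Gamma\coloneqq(-1)^{g-1}E^{(g-2)}_{\ell'}\circ E^{(g-2)T}_\ell\).

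Next I would compute \(\Gamma_*\) on algebraically trivial cycles. Since everything is defined over \(k\), Corollary~\ref{cor:Er-action-on-alg} applies verbatim after base change to \(\kbar\), giving \(G_k\)-equivariant isomorphisms
\[
(E^{(g-2)}_\ell)_*\colon\CH^{g+1}(\mathcal{Q}_{\kbar})_{\alg}\xrightarrow{\ \sim\ }\CH^2(Y_{\kbar})_{\alg}
\qquad\text{and}\qquad
(E^{(g-2)}_{\ell'})_*\colon\CH^{g+1}(\mathcal{Q}_{\kbar})_{\alg}\xrightarrow{\ \sim\ }\CH^2(Y'_{\kbar})_{\alg},
\]
the former having inverse \((-1)^{g-1}(E^{(g-2)T}_\ell)_*\). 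Composing the inverse of the first with the second shows that
\(\Gamma_*=(E^{(g-2)}_{\ell'})_*\circ\bigl((-1)^{g-1}(E^{(g-2)T}_\ell)_*\bigr)\)
is a \(G_k\)-equivariant isomorphism \(\CH^2(Y_{\kbar})_{\alg}\xrightarrow{\ \sim\ }\CH^2(Y'_{\kbar})_{\alg}\); interchanging the roles of \(\ell\) and \(\ell'\) exhibits a two-sided inverse, again realised as the action of a correspondence defined over \(k\).

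Finally I would transfer this to the group schemes. By properties (2) and (3) of Section~\ref{sec:CH2-scheme}, \(\bCH^2_{Y/k}(\kbar)\cong\CH^2(Y_{\kbar})\) as \(G_k\)-modules with \((\bCH^2_{Y/k})^0(\kbar)=\CH^2(Y_{\kbar})_{\alg}\), and similarly for \(Y'\). Using the functoriality of the \(\bCH^2\)-construction under correspondences defined over \(k\) (equivalently, functoriality of Murre's intermediate Jacobian over the perfect closure of \(k\), combined with Galois descent), the correspondence \(\Gamma\) induces a \(k\)-morphism of group schemes \(\bCH^2_{Y/k}\to\bCH^2_{Y'/k}\) realising \(\Gamma_*\) on \(\kbar\)-points, which restricts to a \(k\)-morphism of identity components \((\bCH^2_{Y/k})^0\to(\bCH^2_{Y'/k})^0\). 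The inverse correspondence induces a morphism in the opposite direction, and the two compositions act as the identity on \(\kbar\)-points of these (geometrically reduced, connected) abelian varieties, hence are the identity; therefore \(\Gamma_*\) is an isomorphism of abelian varieties over \(k\).

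The step I expect to be the main obstacle is this last upgrade: making precise that a \(G_k\)-equivariant isomorphism of cycle groups arising from an algebraic correspondence defined over \(k\) descends to a genuine \(k\)-morphism of the intermediate Jacobians. This is also why retaining the explicit inverse correspondence supplied by Corollary~\ref{cor:Er-action-on-alg} is essential: in positive characteristic a bijective homomorphism of abelian varieties need not be an isomorphism, so one cannot conclude merely from bijectivity on \(\kbar\)-points.
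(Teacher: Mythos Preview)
Your proposal is correct and follows essentially the same approach as the paper: both set \(\Gamma=(-1)^{g-1}E^{(g-2)}_{\ell'}\circ E^{(g-2)T}_\ell\), use Corollary~\ref{cor:Er-action-on-alg} to see that \(\Gamma_*\) and \((\Gamma^T)_*\) are mutually inverse on \(\CH^2(\,\cdot\,)_{\alg}\) over \(\kbar\), and then pass to the intermediate Jacobians. The only cosmetic difference is how the last step is phrased: the paper appeals directly to the universal property of Murre's intermediate Jacobian \cite[Section~1.8]{Murre85} to conclude that the compositions \((\Gamma^T)_*\Gamma_*\) and \(\Gamma_*(\Gamma^T)_*\) are the identity on the abelian varieties, whereas you invoke functoriality of \(\bCH^2\) under correspondences and then observe that an endomorphism of a geometrically reduced abelian variety which is the identity on \(\kbar\)-points is the identity morphism---which is the same content unwound one step further.
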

\begin{proof}
Setting $\Gamma \coloneqq (-1)^{g-1}E^{(g-2)}_{\ell'}\circ E^{(g-2)T}_{\ell}$,
we will show that the morphisms 
\begin{align*}
\Gamma_*\colon (\bCH^2_{\mathcal{Q}^{(g-2)}_\ell/k})^0\rightarrow (\bCH^2_{\mathcal{Q}^{(g-2)}_{\ell'}/k})^0, \quad (\Gamma^T)_*\colon 
(\bCH^2_{\mathcal{Q}^{(g-2)}_{\ell'}/k})^0\rightarrow (\bCH^2_{\mathcal{Q}^{(g-2)}_\ell/k})^0
\end{align*}
are inverse to each other.
By Corollary~\ref{cor:Er-action-on-alg},
the compositions $(\Gamma^T)_*\Gamma_*$ and $\Gamma_*(\Gamma^T)_*$ are the identities on the groups
$\CH^2(\mathcal{Q}^{(g-1)}_\ell)_{\alg}$ and $\CH^2(\mathcal{Q}^{(g-1)}_{\ell'})_{\alg}$, respectively.
Since the intermediate Jacobian
of a geometrically rational threefold geometrically agrees with Murre's intermediate Jacobian \cite[Theorem 3.1(vi)]{BW-IJ}, 
the universal property \cite[Section 1.8]{Murre85} implies that $(\Gamma^T)_*\Gamma_*$ and $\Gamma_*(\Gamma^T)_*$ are automorphisms of the abelian varieties $(\bCH^2_{\mathcal{Q}^{(g-1)}_{\ell}/k})^0$ and $(\bCH^2_{\mathcal{Q}^{(g-2)}_{\ell'}/k})^0$.
This concludes the proof.
\end{proof}

\begin{lem}\label{lem:Zbetti3}
$\dim (\bCH^2_{\mathcal{Q}^{(g-2)}/k})^0=g$.
\end{lem}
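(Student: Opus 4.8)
The plan is to compute the dimension of the intermediate Jacobian $(\bCH^2_{\mathcal{Q}^{(g-2)}/k})^0$ by relating it to the Fano scheme of maximal linear spaces $F_{g-1}(X)$, whose Albanese variety we already understand. By Lemma~\ref{lem:ch2-ll'-isom}, the isomorphism class of $(\bCH^2_{\mathcal{Q}^{(g-2)}/k})^0$ is independent of the choice of $\ell \in F_{g-2}(X)(k)$, and since the dimension of an abelian variety can be checked after base change, I may pass to $\kbar$ and work geometrically.

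First I would recall that $\mathcal{Q}^{(g-2)}$ is a smooth projective geometrically connected threefold that is geometrically rational (by Lemma~\ref{lem:hyperbolic-reduction}, or by Proposition~\ref{HT} after further base change), so its intermediate Jacobian agrees with Murre's, and its dimension equals $h^{2,1} = \frac{1}{2}\dim_{\bb Q} H^3(\mathcal{Q}^{(g-2)}_{\bb C},\bb Q)$ (for $k \subset \bb C$; in general one uses $\ell$-adic cohomology and the formula $\dim = \frac{1}{2}\dim H^3_{\text{ét}}$). So the problem reduces to computing $\dim H^3$ of the threefold $\mathcal{Q}^{(g-2)}$, which is a $(1,2)$-complete-intersection-type quadric fibration over $\bb P^1$ — more precisely, a complete intersection of $g-1$ divisors of type $(1,1)$ and one divisor of type $(1,2)$ in $\bb P^1 \times \bb P^{g+1}$. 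This can be done directly: the generic fiber of $\phi^{(g-2)} \colon \mathcal{Q}^{(g-2)} \to \bb P^1$ is a smooth quadric threefold, so the Leray spectral sequence (or the theory of quadric bundles, cf. the degeneration divisor having degree $N+1 = 2g+2$ as noted in Lemma~\ref{lem:hyperbolic-reduction}) gives that the interesting part of $H^3$ is governed by the double cover of $\bb P^1$ branched at the $2g+2$ degeneracy points, i.e.\ by the hyperelliptic curve $C$ of genus $g$. This yields $\dim H^3 = 2g$, hence $\dim (\bCH^2_{\mathcal{Q}^{(g-2)}/k})^0 = g$.

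Alternatively — and this is likely the cleaner route given what is available in the paper — I would use the correspondence machinery already developed. By Corollary~\ref{cor:Er-action-on-alg} with $r = g-1$ applied to $X$ and with $r = g-2$, the iterated exceptional correspondences induce isomorphisms relating $\CH^2(\mathcal{Q}^{(g-2)}_{\kbar})_{\alg}$ to Chow groups on the maximal hyperbolic reduction. Since $\mathcal{Q}^{(g-1)}$ is $k$-isomorphic to the genus $g$ curve $C$ (Lemma~\ref{lem:hyperbolic-reduction}\eqref{item:hyperbolic-reduction-max-min}), one gets an isomorphism of Chow groups, hence of the corresponding abelian varieties, between $(\bCH^2_{\mathcal{Q}^{(g-2)}/k})^0$ and $\bPic^0_{C/k} = \bAlb_{C/k}$, which has dimension $g$. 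Concretely: the correspondence $E^{(g-2)T}$ together with the identification $F_{g-1}(X) \cong \bPic^0_{C/k}$ over $\kbar$ (Lemma~\ref{lem:fanotorsor}, via Reid's theorem) transfers the dimension count; this is parallel to Lemma~\ref{lem:ch2-ll'-isom} but now comparing the $r = g-2$ reduction with the $r = g-1$ reduction.

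The main obstacle I anticipate is making the correspondence-theoretic argument precise: one needs to check that the maps induced by $E^{(g-2)}$ (and its transpose) on the identity components of the $\bCH^2$-group schemes are morphisms of abelian varieties and that the composites are isomorphisms, not merely isogenies — exactly the type of universal-property argument carried out in the proof of Lemma~\ref{lem:ch2-ll'-isom}, invoking \cite[Section 1.8]{Murre85} and \cite[Theorem 3.1(vi)]{BW-IJ}. One subtlety is that $\mathcal{Q}^{(g-1)} = C$ is a curve, not a threefold, so one must interpret "$\CH^2(\mathcal{Q}^{(g-1)})_{\alg}$" correctly (it is $\CH^1(C)_{\alg} = \Pic^0(C)$ up to the codimension shift built into Lemma~\ref{lem:chow-onestep-reduction}) and verify that the blow-up formula computations in Lemma~\ref{lem:chow-onestep-reduction} still apply in the terminal step $i = g-1$, where a cohomology class drops out for dimension reasons; but this case is already explicitly handled there. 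Once that bookkeeping is in place, the conclusion $\dim (\bCH^2_{\mathcal{Q}^{(g-2)}/k})^0 = \dim \bPic^0_{C/k} = g$ is immediate.
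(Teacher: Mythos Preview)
Your second approach---passing to an extension where $F_{g-1}(X)(k)\neq\emptyset$, using Lemma~\ref{lem:ch2-ll'-isom} to arrange $\ell\subset L$ for some $L\in F_{g-1}(X)(k)$, and then performing one more hyperbolic reduction step to reach $\mathcal{Q}^{(g-1)}\cong C$---is exactly the paper's proof. The only difference is in the final step: the paper invokes the blow-up formula for the $\bCH^2$-\emph{scheme} \cite[Proposition 3.10]{BW-IJ}, which directly yields an isomorphism $(E_{g-1})_*\colon (\bCH^2_{\mathcal{Q}^{(g-2)}/k})^0\xrightarrow{\sim}\bPic^0_{C/k}$ of principally polarized abelian varieties, so no separate upgrading via Murre's universal property is needed. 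Your route through Corollary~\ref{cor:Er-action-on-alg} and the argument pattern of Lemma~\ref{lem:ch2-ll'-isom} would also work but is slightly less direct.

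Two small corrections to your first (cohomological) approach: the generic fiber of $\phi^{(g-2)}$ is a quadric \emph{surface} in $\bb P^3$, not a quadric threefold (since $\mathcal{Q}^{(g-2)}$ is a threefold fibered over $\bb P^1$), and the ambient space is $\bb P^1\times\bb P^{g+2}$, not $\bb P^1\times\bb P^{g+1}$. The strategy can still be made to work---the contribution to $H^3$ comes from the monodromy on $H^2$ of the quadric surface fibers, governed by the discriminant double cover $C$---but the details differ from what you sketched for an odd-dimensional quadric fiber.
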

\begin{proof}
By passing to a finite extension of $k$,
we may assume $F_{g-1}(X)(k)\neq \emptyset$.
Using Lemma~\ref{lem:ch2-ll'-isom},
we may further assume that $\ell$ is contained in some $L\in F_{g-1}(X)(k)$.
After choosing a $k$-point in $L\setminus \ell$
and identifying $\mathcal{Q}^{(g-1)}$ with $C$ by Lemma~\ref{lem:hyperbolic-reduction}\eqref{item:hyperbolic-reduction-max-min}, the blow-up formula of Benoist--Wittenberg \cite[Proposition 3.10]{BW-IJ} yields an isomorphism of (principally polarized) abelian varieties
\[
(E_{g-1})_*\colon (\bCH^2_{\mathcal{Q}^{(g-2)}/k})^0\xrightarrow{\sim}\bPic_{C/k}^0.
\]
We conclude $\dim (\bCH^2_{\mathcal{Q}^{(g-2)}/k})^0=\dim \bPic_{C/k}^0=g$.
\end{proof}

\begin{lem}\label{lem:NS2-Q-g-2}
The inclusion $\iota\colon \mathcal{Q}^{(g-2)}\rightarrow \P^1\times \P^{g+2}$ induces an isomorphism \[\iota_*\colon \NS^2(\mathcal{Q}^{(g-2)}_{\kbar})\xrightarrow{\sim}\NS^{g+2}(\P^1_{\kbar}\times_{\kbar}\P^{g+2}_{\kbar}).\]
In particular, if \(s,f \in \NS^2(\mathcal{Q}^{(g-2)}_{\kbar})\) denote the classes defined by $s\mapsto \P^1\times*$ and $f\mapsto *\times \P^1$, we have \(\NS^2(\mathcal{Q}^{(g-2)}_{\kbar})=\Z s\oplus \Z f\).
\end{lem}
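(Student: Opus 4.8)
The plan is to compute both sides over $\kbar$ and show that the Gysin map $\iota_*$ is a surjection of free abelian groups of rank $2$, hence an isomorphism carrying $s,f$ to a basis. For the target, since $\P^1_{\kbar}\times_{\kbar}\P^{g+2}_{\kbar}$ is a product of projective spaces its Chow groups are free and algebraic equivalence coincides with rational equivalence, so $\NS^{g+2}(\P^1_{\kbar}\times_{\kbar}\P^{g+2}_{\kbar})=\CH^{g+2}(\P^1_{\kbar}\times\P^{g+2}_{\kbar})$ is free of rank $2$, generated by $[\P^1\times *]$ and $[*\times\P^1]$ (the latter being a point times a line). For surjectivity of $\iota_*$ (which also produces classes $s,f$ as in the statement), I would note that the generic fiber of $\phi^{(g-2)}\colon\mathcal{Q}^{(g-2)}_{\kbar}\to\P^1_{\kbar}$ is a smooth quadric surface over the $C_1$-field $\kbar(\P^1)$, hence has a rational point; by properness $\phi^{(g-2)}$ admits a section, whose class pushes forward to $[\P^1\times *]$. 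A general fiber of $\phi^{(g-2)}$ is a split smooth quadric surface over $\kbar$, so it contains a line, whose class pushes forward to $[*\times\P^1]$.

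The crux is that $\NS^2(\mathcal{Q}^{(g-2)}_{\kbar})$ is free of rank $2$, which I would establish by extending the iterated hyperbolic reduction of Section~\ref{section-maximal} by one step, as in the proof of Lemma~\ref{lem:Zbetti3}. Over $\kbar$ the section produced above shows, via Proposition~\ref{HT}, that $\ell_{\kbar}$ lies in some $L\in F_{g-1}(X_{\kbar})$; running the construction of Section~\ref{section-maximal} with $r=g-1$ and coordinate points chosen so that $\langle p_0,\dots,p_{g-2}\rangle=\ell_{\kbar}$ and $\langle p_0,\dots,p_{g-1}\rangle=L$, the case $i=g-1$ of the blow-up diagram there (see also \cite[Remark 2.6]{KuznetsovShinder18}) gives
\[
\mathcal{Q}^{(g-2)}_{\kbar}\ \xleftarrow{\ \bl_{\P^1\times p_{g-1}}\ }\ \widetilde{\mathcal{Q}^{(g-2)}}_{\kbar}\ \xrightarrow{\ \bl_{C_{\kbar}}\ }\ \overline{\mathcal{P}^{(g-1)}}_{\kbar},
\]
where $\overline{\mathcal{P}^{(g-1)}}_{\kbar}$ is a $\P^2$-bundle over $\P^1_{\kbar}$, $C_{\kbar}=\mathcal{Q}^{(g-1)}$ is the smooth connected genus $g$ hyperelliptic curve, and both blow-up centers are smooth by Lemma~\ref{lem:hyperbolic-reduction}\eqref{item:hyperbolic-reduction-smooth}. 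The projective bundle formula gives $\NS^2(\overline{\mathcal{P}^{(g-1)}}_{\kbar})\cong\Z^2$; since $\CH^1(B)/\CH^1(B)_{\alg}\cong\Z$ for a smooth connected projective curve $B$, the blow-up formula for Chow groups yields $\NS^2(\widetilde{\mathcal{Q}^{(g-2)}}_{\kbar})\cong\NS^2(\overline{\mathcal{P}^{(g-1)}}_{\kbar})\oplus\Z\cong\Z^3$ from the right-hand map, and $\NS^2(\widetilde{\mathcal{Q}^{(g-2)}}_{\kbar})\cong\NS^2(\mathcal{Q}^{(g-2)}_{\kbar})\oplus\Z$ from the left-hand map. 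Hence $\NS^2(\mathcal{Q}^{(g-2)}_{\kbar})$ is a rank-$2$ direct summand of $\Z^3$, so free of rank $2$. Combined with surjectivity, $\iota_*$ is an isomorphism, and $s,f$, being preimages of the two generators, form a basis, so $\NS^2(\mathcal{Q}^{(g-2)}_{\kbar})=\Z s\oplus\Z f$.

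The step I expect to be the main obstacle is verifying that the blow-up diagram genuinely applies over $\kbar$: that $\ell$ extends to a maximal linear space on $X_{\kbar}$ (which I would deduce from the $C_1$-property of $\kbar(\P^1)$ via Proposition~\ref{HT}) and that the ambient threefolds and the blow-up centers are smooth, so that the blow-up and projective bundle formulas for Chow groups apply — rather than any of the individual computations. An alternative route avoiding the field extension would combine the weak Lefschetz theorem for the complete intersection $\mathcal{Q}^{(g-2)}_{\kbar}\subset\P^1_{\kbar}\times\P^{g+2}_{\kbar}$ with the vanishing of $\mathrm{Griff}^2\otimes\Q$ (Bloch--Srinivas, using that $\mathcal{Q}^{(g-2)}_{\kbar}$ is $\kbar$-rational) to get $\rank\NS^2\leq 2$, but the blow-up argument is cleaner and works in all characteristics $\neq 2$.
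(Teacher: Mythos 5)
Your overall strategy (compute the rank of $\NS^2(\mathcal{Q}^{(g-2)}_{\kbar})$ via one more step of hyperbolic reduction, then deduce the isomorphism from surjectivity of $\iota_*$) is genuinely different from the paper's argument, but as written it has a real gap at its central step. You claim that, over $\kbar$, Proposition~\ref{HT} produces a $(g-1)$-plane $L\in F_{g-1}(X_{\kbar})$ \emph{containing} $\ell_{\kbar}$. Proposition~\ref{HT} does not say this: from a section of $\phi^{(g-2)}$ it only gives $F_{g-1}(X)(\kbar)\neq\emptyset$ (equivalently, a relative isotropic $(g-1)$-plane of the \emph{pencil} through $\ell$), with no incidence condition between $\ell$ and a $(g-1)$-plane lying on $X$. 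In fact the required $L$ generally does not exist: the incidence variety $\{(\ell,L):\ell\subset L\}\subset F_{g-2}(X_\kbar)\times F_{g-1}(X_\kbar)$ has dimension $g+(g-1)=2g-1$, while $\dim F_{g-2}(X)=3g-3$, so for every $g\geq 3$ a general $(g-2)$-plane $\ell$ is contained in no $(g-1)$-plane on $X$. Your extended blow-up diagram needs $\ell_{g-1}=\langle p_0,\dots,p_{g-1}\rangle=L$ to lie on $X$ (it must be isotropic for the whole pencil), so the tower simply cannot be built for such $\ell$. The paper faces the same issue in Lemma~\ref{lem:Zbetti3}, but there it is legitimate to \emph{replace} $\ell$ by a better $\ell'\subset L$ because Lemma~\ref{lem:ch2-ll'-isom} shows the intermediate Jacobian is independent of the choice; you have no analogous independence statement for $\NS^2$ (and $\NS^2$ of a threefold is not a birational invariant, so Lemma~\ref{lem:hyperbolic-reduction}(5) does not substitute).

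The gap is repairable, but not by citation: the blow-up diagram of \cite{KuznetsovShinder18} applies to hyperbolic reduction along \emph{any} regular isotropic line subbundle, so you could blow up a general section $S$ of $\phi^{(g-2)}_{\kbar}$ avoiding the vertices of the singular fibers (such a section exists over $\kbar$, but this needs an argument) and run the same $\NS^2$ computation with $\bl_S\mathcal{Q}^{(g-2)}_{\kbar}\cong\bl_{C}\bigl(\P^2\text{-bundle over }\P^1\bigr)$; the paper's Section~\ref{section-maximal} diagram as stated only covers constant sections $\P^1\times p_i$ with $p_i\in\ell$. Separately, a minor slip: the pushforward of a section class is $[\P^1\times *]+d\,[*\times\text{line}]$ for some $d\geq 0$, not $[\P^1\times *]$; surjectivity of $\iota_*$ still follows since this class together with $[*\times\text{line}]$ generates, but the statement should be corrected. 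For comparison, the paper avoids all of this by arguing directly on $\mathcal{Q}^{(g-2)}$: the localization sequence for $\phi^{(g-2)}$, the fact that $\CH^2$ of the generic fiber is generated by a section class and that $1$-cycles on each fiber are generated by lines, and connectedness of $F_1(\mathcal{Q}^{(g-2)}/\P^1)$ (a $\P^1$-bundle over $C$) to identify all fiber-line classes in $\NS^2$, with injectivity checked by pushing forward to $\P^1\times\P^{g+2}$.
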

\begin{proof}
Over \(\kbar\), let \(s'\) denote the class of a section of the quadric surface fibration $\phi^{(g-2)}_\kbar \colon \mathcal{Q}^{(g-2)}_{\kbar}\rightarrow \P^1_{\kbar}$ (which exists by Tsen's theorem), and let \(f\) denote the class of a \(\kbar\)-line $f$ on a fiber of \(\phi^{(g-2)}_\kbar\).
We show that
\begin{align}\label{eq-map-to-NS}
\Z^2\rightarrow \NS^2(\mathcal{Q}^{(g-2)}_{\kbar}), \quad (a,b)\mapsto a s' + bf
\end{align}
is an isomorphism.
Since $\mathcal{Q}^{(g-2)}_{\kbar(\P^1)}$ is an isotropic quadric, $\CH^2(\mathcal{Q}^{(g-2)}_{\kbar(\P^1)})=\Z$ is generated by the image of $s'$.
Moreover, the Chow group of $1$-cycles on any fiber of $\phi^{(g-2)}_\kbar\colon\mathcal{Q}^{(g-2)}_{\kbar}\rightarrow \P^1_{\kbar}$ is generated by the classes of \(\kbar\)-lines on it.
The localization exact sequence then yields that every class $\alpha \in \CH^2(\mathcal{Q}^{(g-2)}_{\kbar})$ may be written as
\[
\alpha = \deg(\alpha/\P^1)s' + b_1f_1+\dots + b_nf_n,
\]
where $b_1,\dots, b_n\in \Z$ and $f_1, \dots, f_n$ are \(\kbar\)-lines on fibers of \(\phi^{(g-2)}_\kbar\).
Since $F_1(\mathcal{Q}^{(g-2)}_{\kbar}/\P^1_{\kbar})$ is a $\P^1$-bundle over $C$, 
we have $f_1=\dots =f_n=f$ in $\NS^2(\mathcal{Q}^{(g-2)}_{\kbar})$ and the map \eqref{eq-map-to-NS} is surjective. 
The injectivity of \eqref{eq-map-to-NS} follows from the fact that the images of $s',f$ in $\NS^{g+2}(\P^1_{\kbar}\times_{\kbar}\P^{g+2}_{\kbar})$ are linearly independent.
Finally, this implies \(\iota_*\) is an isomorphism because \(\iota_* s'\) and \(\iota_* f\) freely generate \(\NS^{g+2}(\P^1_{\kbar}\times_{\kbar}\P^{g+2}_{\kbar})\).
\end{proof}

Now we can describe the intermediate Jacobian \((\bCH^2_{\mathcal{Q}^{(g-2)}/k})^0\) and the torsor associated to the class \(f\).

\begin{prop}\label{prop-pic1ch2f}
There exists an isomorphism of principally polarized abelian varieties
\[
\bPic_{C/k}^0\xrightarrow{\sim}(\bCH^2_{\mathcal{Q}^{(g-2)}/k})^0,
\]
which underlies an isomorphism of torsors
\[
\bPic_{C/k}^1\xrightarrow{\sim}(\bCH^2_{\mathcal{Q}^{(g-2)}/k})^f.
\]

\end{prop}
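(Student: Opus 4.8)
The plan is to construct a canonical $k$-morphism $j\colon C\to(\bCH^2_{\mathcal{Q}^{(g-2)}/k})^f$, pass to the induced morphism of Albanese torsors, and verify over $\kbar$ that it is an isomorphism. Fix $\ell\in F_{g-2}(X)(k)$ and write $M\coloneqq\mathcal{Q}^{(g-2)}$; by Lemma~\ref{lem:hyperbolic-reduction}\eqref{item:hyperbolic-reduction-smooth} this is a smooth, projective, geometrically connected, geometrically rational threefold carrying a quadric surface fibration $\phi^{(g-2)}\colon M\to\P^1$ with simple degeneration over the degeneracy locus of $\phi$. By Lemma~\ref{lem:NS2-Q-g-2}, $\NS^2(M_\kbar)=\Z s\oplus\Z f$ with $f$ the class of a $\kbar$-line in a fibre; as $\Z f$ is the subgroup of classes of degree $0$ over $\P^1$ and $f$ generates its submonoid of effective classes, $f$ is $G_k$-invariant, so $(\bCH^2_{M/k})^f$ is a well-defined torsor under $(\bCH^2_{M/k})^0$.

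To construct $j$: by Lemma~\ref{lem:hyperbolic-reduction}\eqref{item:hyperbolic-reduction-embedding}, a line in a fibre of $M\to\P^1$ is the same datum as an isotropic $g$-plane of $\phi\colon\mathcal{Q}\to\P^1$ containing $\ell$; sending such a plane to its point of $F_g(\mathcal{Q}/\P^1)$, and then to $C$, realizes over $k$ the Stein factorization $F_1(M/\P^1)\to C$ of $F_1(M/\P^1)\to\P^1$, a morphism whose geometric fibres are the rulings of the smooth fibres of $M$, each a $\P^1$. The universal line over $F_1(M/\P^1)$ is a flat family of codimension-$2$ cycles on $M$, hence defines a $k$-morphism $F_1(M/\P^1)\to\bCH^2_{M/k}$; this is constant along each ruling, since a geometric $\P^1$ carries no nonconstant morphism to a torsor under an abelian variety, so it factors uniquely through $F_1(M/\P^1)\to C$, yielding $j\colon C\to\bCH^2_{M/k}$ over $k$, landing in $(\bCH^2_{M/k})^f$ because a $\kbar$-line has class $f$. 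By the universal property of the Albanese torsor \cite[Section~2]{wittenberg08} and the identifications $\bAlb_{C/k}\cong\bPic^0_{C/k}$, $\bAlb^1_{C/k}\cong\bPic^1_{C/k}$ (the latter via the $k$-morphism $c\mapsto\mathcal{O}_C(c)$), the morphism $j$ factors as $C\to\bPic^1_{C/k}\xrightarrow{\ \bar\jmath\ }(\bCH^2_{M/k})^f$, with $\bar\jmath$ a morphism of torsors lying over a homomorphism $\bPic^0_{C/k}\to(\bCH^2_{M/k})^0$.

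It then suffices to show that this homomorphism $\bPic^0_{C/k}\to(\bCH^2_{M/k})^0$ is an isomorphism of principally polarized abelian varieties and that $j$ lands in $\bPic^1$: the first statement forces $\bar\jmath$ to be an isomorphism of torsors, and, the principal polarizations being defined over $k$, the whole conclusion then descends from $\kbar$. Both assertions may be checked after base change to $\kbar$, where --- after replacing $\ell$ by a $(g-2)$-plane contained in some $L_0\in F_{g-1}(X)(\kbar)$, which by Lemma~\ref{lem:ch2-ll'-isom} changes neither $(\bCH^2_{M_\kbar})^0$ nor the class $f$, and choosing a $\kbar$-point of $L_0$ off that plane --- one is in the setting of the proof of Lemma~\ref{lem:Zbetti3}. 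There the Benoist--Wittenberg blow-up formula \cite[Proposition~3.10]{BW-IJ}, applied to the diagram preceding Lemma~\ref{lem:chow-onestep-reduction}, gives an isomorphism of principally polarized abelian varieties $(E_{g-1})_*\colon(\bCH^2_{\mathcal{Q}^{(g-2)}_\kbar})^0\xrightarrow{\sim}\bPic^0_{C_\kbar}$, extending to an isomorphism of the full Chow group schemes. Identifying each smooth fibre with its image under stereographic projection from the chosen section, a general $\kbar$-line in a fibre maps to a line in the corresponding $\P^2$-fibre of $\overline{\mathcal{P}^{(g-1)}}$ that meets $\mathcal{Q}^{(g-1)}_\kbar=C_\kbar$ transversally in exactly one point --- the point recording the opposite ruling. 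Hence $(E_{g-1})_*$ carries the $f$-component into $\bPic^1_{C_\kbar}$, and $(E_{g-1})_*\circ j$ equals, on $C_\kbar$, the Abel map precomposed with the hyperelliptic involution; since the latter acts by $[-1]$ on $\bPic^0_{C_\kbar}$, the linear part of $\bar\jmath_\kbar$ is $-1$ times the inverse of the principally polarized isomorphism $(E_{g-1})_*$, hence is itself such an isomorphism, while $j$ has degree $1$. Transporting along Lemma~\ref{lem:ch2-ll'-isom} gives the same for $M=\mathcal{Q}^{(g-2)}_\ell$, finishing the argument over $k$.

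The step I expect to be the main obstacle is this $\kbar$-identification of $j$ with $(E_{g-1})_*^{-1}$: one must check that the Abel--Jacobi map for the family of lines on the quadric surface bundle $M$ is compatible with the hyperbolic-reduction correspondences of Lemma~\ref{lem:chow-onestep-reduction} used to move $\ell$, and make the stereographic-projection computation precise enough to pin down both the principal polarization and the degree-$1$ statement. In essence this is a matching of the two incarnations of $C$ present in the picture --- as the hyperbolic reduction $\mathcal{Q}^{(g-1)}$ and as the Stein factorization of $F_1(\mathcal{Q}^{(g-2)}/\P^1)$ --- a bookkeeping exercise with correspondences, but one requiring care with the signs and the hyperelliptic involution. (That $F_1(M/\P^1)$ is in general only a Brauer--Severi scheme over $C$, and not a Zariski-locally trivial $\P^1$-bundle, is harmless, since it is used only through the canonical morphism to $\bCH^2_{M/k}$ and after base change to $\kbar$.)
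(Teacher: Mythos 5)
Your overall strategy is close in spirit to the paper's: the geometric heart of both arguments is the observation that passing through the family of lines sends a line to data of the \emph{opposite} ruling, so that the hyperelliptic involution (acting by \(-1\) on \(\bPic^0_{C/k}\)) appears and makes the relevant homomorphism an isomorphism. The paper packages this differently: it works with \(F=F_1(\mathcal{Q}^{(g-2)}/\P^1)\), \(p\colon F\to C\), and the universal family \(U\), proves the identity \(U^*\circ U_*=p^*\circ(-1)^*\circ p_*\) on \(\bAlb_{F/k}\), uses Lemma~\ref{lem:Zbetti3} only for the dimension count, and obtains the principal polarization by a direct cup-product computation in \(\ell\)-adic cohomology via the characterization of the polarization on \((\bCH^2)^0\) in \cite{BW20}. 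Crucially, the paper's argument never moves \(\ell\): the only place \(\ell\) is changed is inside the proof of Lemma~\ref{lem:Zbetti3}, where only a dimension is extracted.

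The genuine gap in your proposal is the final ``transporting along Lemma~\ref{lem:ch2-ll'-isom}'' step, which you yourself flag as the main obstacle but treat as bookkeeping. Your verification of the isomorphism and of the polarization is carried out only for an auxiliary \((g-2)\)-plane \(\ell'\) contained in some \(L_0\in F_{g-1}(X)(\kbar)\); note that for \(g>2\) a general \(\ell\) is contained in no \((g-1)\)-plane even over \(\kbar\) (and over \(k\) such an \(\ell'\) cannot exist in the interesting cases, since otherwise \(F_{g-1}(X)(k)\neq\emptyset\)), so the transport back to the given \(\ell\) is unavoidable and must be done over \(\kbar\) for the geometric statements. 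But Lemma~\ref{lem:ch2-ll'-isom} only provides an isomorphism of \emph{abelian varieties} \((\bCH^2_{\mathcal{Q}^{(g-2)}_\ell/k})^0\simeq(\bCH^2_{\mathcal{Q}^{(g-2)}_{\ell'}/k})^0\); it says nothing about principal polarizations, and an isomorphism of abelian varieties between two principally polarized abelian varieties need not respect the polarizations (one cannot appeal to \(\NS=\Z\), since the statement must hold for every hyperelliptic \(C\)). Moreover, you would also need the compatibility \(\Gamma_*\circ\bar\jmath_\ell=\pm\bar\jmath_{\ell'}\) (up to translation) for \(\Gamma=(-1)^{g-1}E^{(g-2)}_{\ell'}\circ E^{(g-2)T}_{\ell}\), i.e.\ an explicit computation of how line classes on \(\mathcal{Q}^{(g-2)}_\ell\) transform under the hyperbolic-reduction correspondences, in the style of Lemmas~\ref{lem:L-to-section} and~\ref{lem:L-to-point}; this is not established either. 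Even granting that compatibility, the polarization statement for \(\bar\jmath_\ell\) would still not follow without knowing \(\Gamma_*\) preserves the polarizations, which is exactly the missing content. Your stereographic-projection computation of \((E_{g-1})_*\circ j\) and the resulting sign are correct and match the paper's key identity, but to close the argument for arbitrary \(\ell\) you would essentially have to redo the paper's cohomological polarization computation with the correspondence \(U\) on \(F_1(\mathcal{Q}^{(g-2)}/\P^1)\) — at which point the detour through \(\mathcal{Q}^{(g-1)}\) and Lemma~\ref{lem:ch2-ll'-isom} becomes unnecessary.
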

\begin{proof}
Denote $F \coloneqq F_1(\mathcal{Q}^{(g-2)}/\P^1)$, and let $U \coloneqq U_1(\mathcal{Q}^{(g-2)}/\P^1)$ be the universal family. Let $p\colon F\rightarrow C$ be the morphism induced by the Stein factorization of the natural morphism $F\rightarrow \P^1$,
and let $-1\colon C\rightarrow C$ be the hyperelliptic involution.
We have morphisms of torsors
\[p_*\colon \bAlb_{F/k}^1\rightarrow \bPic_{C/k}^1, \quad
U_*\colon \bAlb_{F/k}^1\rightarrow (\bCH^2_{\mathcal{Q}^{(g-2)}/k})^f
\] 
which induce morphisms of abelian varieties
\[p_*\colon \bAlb_{F/k}\rightarrow \bPic_{C/k}^0, \quad
U_*\colon \bAlb_{F/k}\rightarrow (\bCH^2_{\mathcal{Q}^{(g-2)}/k})^0.
\]
Since $p\colon F\rightarrow C$ is geometrically a $\P^1$-bundle, $p_*\colon \bAlb_{F/k}\rightarrow \bPic_{C/k}^0$ is an isomorphism of abelian varieties.
It now remains for us to show that the composition 
$U_*\circ (p_*)^{-1}\colon \bPic_{C/k}^0\rightarrow (\bCH^2_{\mathcal{Q}^{(g-2)}/k})^0$
is an isomorphism of principally polarized abelian varieties.
For this, we may assume that $k$ is an algebraically closed.
We have the following diagram.
\[
\begin{tikzcd}
\bPic^0_{C/k}                  & \bAlb_{F/k} \arrow[l, "p_*"'] \arrow[rd, "U_*"] \arrow[dd, "U^*\circ U_*"] &                                                \\
                               &                                                                      & (\bCH^2_{\mathcal{Q}^{(g-2)}/k})^0 \arrow[ld, "U^*"] \\
\bPic^0_{C/k} \arrow[r, "p^*"] & \bPic_{F/k}^0                                                        &                                               
\end{tikzcd}
\]
The composition $(p^*)^{-1}\circ U^*\circ U_*\circ (p_*)^{-1}$ is equal to $(-1)^*$,
and thus is an isomorphism.
Since Lemma~\ref{lem:Zbetti3} shows 
$\dim \bPic_{C/k}^0=\dim (\bCH^2_{\mathcal{Q}^{(g-2)}/k})^0=g$,
all the arrows in the above diagram are isomorphisms of abelian varieties.
To show that $U_*\circ (p_*)^{-1}$ respects the principal polarizations on $\bPic_{C/k}^0$ and $(\bCH^2_{\mathcal{Q}^{(g-2)}/k})^0$,
consider the following diagram of $\ell$-adic cohomology where all arrows are isomorphisms.
\[
\begin{tikzcd}
H^1(C) \arrow[dd, "(-1)^*"']                 & H^3(F) \arrow[l, "p_*"'] \arrow[rd, "U_*"] \arrow[dd, "U^*\circ U_*"] &                                                \\
                               &                                                                      & H^3(\mathcal{Q}^{(g-2)}) \arrow[ld, "U^*"] \\
H^1(C) \arrow[r, "p^*"] & H^1(F)                                                        &                                               
\end{tikzcd}
\]
For $\alpha, \beta\in H^3(F)$,
\[
(U_*\alpha) \cup (U_*\beta) = \alpha\cup (U^*U_*\beta) = (p_*\alpha)\cup ((-1)^* p_*\beta) = - (p_*\alpha) \cup (p_*\beta),
\]
where we have used that the pullback $(-1)^*$ on $H^1(C)$ is equal to multiplication by $-1$.
The characterization of the principal polarization on the intermediate Jacobian \((\bCH^2)^0\) in \cite[Property 2.4, the following comments, and Identity (2.9)]{BW20} then concludes the proof.
\end{proof}

Next, we will use the following lemmas to relate \(F_{g-1}(X)\) to the intermediate Jacobian \((\bCH^2_{\mathcal{Q}^{(g-2)}/k})^0\).

\begin{lem}\label{lem:L-to-section}
Let $L \in \NS^g(X_{\kbar})$ be the class of a $(g-1)$-plane on $X$.
Then there exists $a\in \Z$ such that
\[
(E^{(g-2)})_*(\P^1\times L) = s+af
\]
in $\NS^2(\mathcal{Q}^{(g-2)}_{\kbar})$, where \(s\) and \(f\) are the classes defined in Lemma~\ref{lem:NS2-Q-g-2}.
\end{lem}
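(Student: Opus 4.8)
The plan is to reduce the statement, by restriction to the generic fibre of $\phi^{(g-2)}$, to a concrete linear-algebra computation on a split quadric, and then to evaluate the correspondence via the linear projection $\pi_\ell$.

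First, by Lemma~\ref{lem:NS2-Q-g-2} we have $\NS^2(\mathcal{Q}^{(g-2)}_{\kbar})=\Z s\oplus\Z f$, so one automatically has $(E^{(g-2)})_*(\P^1\times L)=cs+af$ for unique integers $c,a$, and the content of the lemma is exactly that $c=1$. To isolate $c$ I would restrict everything to the generic fibre of $\phi^{(g-2)}\colon\mathcal{Q}^{(g-2)}_{\kbar}\to\P^1_{\kbar}$: there $f$ is supported on fibres and restricts to $0$, while $s$ restricts to a $0$-cycle of degree $1$, so $c$ is the degree of the $0$-cycle obtained by restricting $(E^{(g-2)})_*(\P^1\times L)$ to this generic fibre. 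Since $\P^1\times L$ restricts to the class of the $(g-1)$-plane $L$ inside the generic fibre $\mathcal{Q}_{\kbar(\P^1)}$ of $\phi$, and the correspondence-pushforward is compatible with this restriction, I am reduced — after the harmless base change to an algebraic closure $K$ of $\kbar(\P^1)$, which does not change degrees — to the following: if $Q$ is a split smooth quadric of dimension $2g$ over $K$, $\ell\subset Q$ is the fixed isotropic $(g-2)$-plane, $\overline{Q}$ is the hyperbolic reduction of $Q$ with respect to $\ell$ (a split quadric surface sitting in $\P(\ell^\perp/\ell)\cong\P^3$ and parametrising the isotropic $(g-1)$-planes of $Q$ containing $\ell$, by Lemma~\ref{lem:hyperbolic-reduction}), and $E_\eta=\{(x,L')\mid x\in L'\}\subset Q\times\overline{Q}$ is the induced correspondence, then $\deg\big((E_\eta)_*[L]\big)=1$, where $[L]$ is the class of a $(g-1)$-plane on $Q$.

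Next, since $g+1>\tfrac{1}{2}\dim Q$, the group $\CH^{g+1}(Q)$ is infinite cyclic, generated by the class of any $(g-1)$-plane on $Q$; as the correspondence action factors through rational equivalence, I may therefore take $L$ to be a general $(g-1)$-plane on $Q$. The geometric heart of the argument is the description of the first projection $E_\eta\to Q$: its fibre over $x$ is empty unless $x\in\ell^\perp\cap Q$; it equals the single plane $\langle\ell,x\rangle$ (which is isotropic because $x\in Q\cap\ell^\perp$ and $\ell$ is isotropic) when $x\in(\ell^\perp\cap Q)\setminus\ell$; and it equals all of $\overline{Q}$ over $\ell$. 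Hence $E_\eta\to Q$ is birational onto $\ell^\perp\cap Q$, with inverse given away from $\ell$ by $x\mapsto\big(x,[\langle\ell,x\rangle]\big)$, and $[\langle\ell,x\rangle]$ is nothing but the image $\pi_\ell(x)$ of $x$ under the linear projection away from $\ell$, a point of $\P(\ell^\perp/\ell)=\P^3$ lying on $\overline{Q}$.

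With this in place, for a general $(g-1)$-plane $L\subset Q$ one has $L\cap\ell=\emptyset$, so $\pi_\ell$ maps $L$ isomorphically onto a linear $\P^{g-1}$ in the target $\P^{g+2}$ of $\pi_\ell$. A dimension count gives $\dim(L\cap\ell^\perp)\geq(g-1)+(g+2)-(2g+1)=0$, and for general $L$ this intersection — a linear subspace, hence reduced — is a single point $q$, necessarily with $q\notin\ell$. Then the preimage of $L$ under $E_\eta\to Q$ is the single reduced point $\big(q,[\langle\ell,q\rangle]\big)$, so pushing forward to $\overline{Q}$ yields $(E_\eta)_*[L]=[\langle\ell,q\rangle]$, the class of a single $K$-point of $\overline{Q}$; equivalently, the linear subspaces $\pi_\ell(L)=\P^{g-1}$ and $\P^3=\P(\ell^\perp/\ell)$ of $\P^{g+2}$ meet in the one reduced point $\pi_\ell(q)\in\overline{Q}$. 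Therefore $\deg\big((E_\eta)_*[L]\big)=1$, so $c=1$, which is the assertion of the lemma.

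The step I expect to be the main obstacle is making the reduction of the first paragraph precise: one must check that forming $(E^{(g-2)})_*(\P^1\times L)$ commutes with restriction to the generic fibre of $\phi^{(g-2)}$, and, more delicately, that replacing $L$ by a general $(g-1)$-plane — legitimate only because the answer depends solely on the class $[L]\in\CH^{g+1}(Q)=\Z$ — can be arranged while keeping $\ell$ fixed, so that the genericity hypotheses on $L$ relative to $\ell$ are genuinely attainable. One must also confirm that the linear intersection $\P^{g-1}\cap\P^3$ inside $\P^{g+2}$ is actually $0$-dimensional (it is then automatically a reduced point), so that the coefficient of $s$ comes out exactly $1$ rather than a larger positive integer. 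A routine but longer alternative is to use the factorisation $E^{(g-2)}=E_{g-2}\circ\cdots\circ E_0$ from Lemma~\ref{lem:chow-onestep-reduction} and to track the class of $L$ through the one-point hyperbolic reductions one step at a time, which trades the single dimension count for $g-1$ elementary steps.
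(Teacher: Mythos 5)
Your proposal is correct, and it proves the lemma by a different computation than the paper, though within the same frame. The paper's proof also passes to the generic fibre (and likewise moves the linear space within its equivalence class, so that it avoids the centres of the reductions), but instead of evaluating the full correspondence in one stroke it uses the factorisation $E^{(g-2)}=E_{g-2}\circ\cdots\circ E_0$ of Lemma~\ref{lem:chow-onestep-reduction}: on the generic fibre each $(E_i)_*$ is ``intersect with the tangent hyperplane at $p_i$, then project from $p_i$,'' so a linear space not through $p_i$ is sent to a linear space of one dimension lower; after $g-1$ steps the $(g-1)$-plane becomes a $k(\P^1)$-point, i.e.\ a class of degree $1$ over $\P^1$, which forces the $s$-coefficient to be $1$ with no transversality or multiplicity bookkeeping, since every intermediate output is an honest linear space. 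This is exactly the ``routine but longer alternative'' you mention at the end; what you do instead is the one-step computation with the incidence correspondence $E_\eta=\{(x,L')\mid x\in L'\}$, using that a general $(g-1)$-plane $L$ with $L\cap\ell=\emptyset$ meets $\ell^{\perp}$ in a single point $q$, giving $(E_\eta)_*[L]=[\langle\ell,q\rangle]$. That route is more geometric and self-contained for this lemma, at the cost of the verifications you flag, all of which do go through: $E^{(g-2)}$ lies in $\cal Q\times_{\P^1}\cal Q^{(g-2)}$, so its action commutes with restriction to the generic fibre; on the split quadric $Q_K$ one has $\CH^{g+1}(Q_K)\cong\Z$ generated by the class of any $(g-1)$-plane (each has degree $1$ against $h^{g-1}$), so replacing $L_K$ by a plane general with respect to $\ell$ is legitimate; a hyperbolic basis exhibits a plane with $L\cap\ell=\emptyset$ and $\dim(L\cap\ell^{\perp})=0$, and these conditions are open on the irreducible orthogonal Grassmannian; and at the resulting point the intersection of $L\times\overline{Q}$ with $E_\eta$ is proper and scheme-theoretically reduced, because away from $\ell$ the projection identifies $E_\eta$ with $\ell^{\perp}\cap Q$ and hence identifies this intersection with the linear intersection $L\cap\ell^{\perp}$, a reduced point, so the multiplicity is $1$.
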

\begin{proof}
On the generic fiber level, each $(E_i)_*$ corresponds to taking the intersection with the tangent hyperplane $H_i$ to $\mathcal{Q}^{(i-1)}_{k(\P^1)}$ at $p_i$ and then projecting to the base of the cone $\mathcal{Q}^{(i-1)}_{k(\P^1)}\cap H_i$.
Hence a linear space on $\mathcal{Q}^{(i-1)}_{k(\P^1)}$ maps to a linear space of one dimension lower on $\mathcal{Q}^{(i)}_{k(\P^1)}$.
In particular, under $(E^{(g-2)})_*$, a $(g-1)$-plane on $\mathcal{Q}_{k(\P^1)}$ maps to a $k(\P^1)$-point on $\mathcal{Q}_{k(\P^1)}$.
(Note: since we are interested in algebraic equivalence classes over $\kbar$, we may assume that a linear space on a quadric in question does not contain a blown-up point.)
\end{proof}

We will also need the following variant of Lemma~\ref{lem:L-to-section}.

\begin{lem}\label{lem:L-to-point}
Assume $F_{g-1}(X)(k)\neq\emptyset$. Choose $L\in F_{g-1}(X)(k)$ so that $\mathcal{Q}^{(g-1)}$ is defined, and, using Lemma~\ref{lem:hyperbolic-reduction}\eqref{item:hyperbolic-reduction-embedding},
identify $\mathcal{Q}^{(g-1)}$ with 
the relative Fano scheme of isotropic $g$-planes of $\mathcal{Q}\rightarrow \P^1$ containing $L$.
There exists a divisor class \(D\) on $\P^1\times \P^{g+1}$ such that for any $(g-1)$-plane $M$ on $X$ with $\dim L\cap M=g-2$,
\[
(E^{(g-1)})_*(\P^1\times M) = (-1)^{g-1}\langle L, M\rangle + \mathcal{Q}^{(g-1)}\cdot D
\]
in $\CH^1(\mathcal{Q}^{(g-1)})$.
\end{lem}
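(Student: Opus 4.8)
The plan is to compute the $0$-cycle $(E^{(g-1)})_*(\P^1\times M)$ on the curve $\mathcal{Q}^{(g-1)}$ by pushing it through the $g$ one-point hyperbolic reductions $E_0,\dots,E_{g-1}$ whose composition is $E^{(g-1)}$ (Lemma~\ref{lem:chow-onestep-reduction}\eqref{item:chow-onestep-reduction-exc-corresp}), keeping track of everything over $\P^1$. First I would choose the spanning points $p_0,\dots,p_{g-1}$ of $L$ in general position, in particular so that none of them lies on $M$ (possible since $L\not\subseteq M$); by Lemma~\ref{lem:hyperbolic-reduction}\eqref{item:hyperbolic-reduction-independence} only the $k$-isomorphism class of $\mathcal{Q}^{(g-1)}$ is at stake, so this is harmless. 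On the generic fiber of $\phi\colon\mathcal{Q}\to\P^1$, as in the proof of Lemma~\ref{lem:L-to-section} each step $(E_i)_*$ intersects with the tangent hyperplane at $p_i$ and projects away from $p_i$. The key observation I would make is that $\langle L,M\rangle$ is a $g$-plane, hence $\langle L,M\rangle\not\subseteq X$ because $F_g(X)=\emptyset$ (Lemma~\ref{lem:Kuznetsov_fanoscheme}); this forces the pairing between $L$ and $M$ to be nonzero on all but one member of the pencil, so that over the generic point of $\P^1$ the linear space $M$ is not contained in any of the successive tangent hyperplanes. Thus its dimension drops at each of the $g$ steps and its image becomes empty over the generic point, and $(E^{(g-1)})_*(\P^1\times M)$ is a $0$-cycle supported over finitely many closed points of $\P^1$.

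Among these, the point producing the main term is the unique $[s_0:t_0]\in\P^1$ at which $L$ lies in the orthogonal complement of $M$ (equivalently, at which $\langle L,M\rangle$ is isotropic for the corresponding member of the pencil). Over $[s_0:t_0]$ the reduced quadric contains $\langle L,M\rangle$ as a maximal isotropic subspace containing $L$, and under the identification of $\mathcal{Q}^{(g-1)}$ with the relative Fano scheme of isotropic $g$-planes of $\phi$ containing $L$ (Lemma~\ref{lem:hyperbolic-reduction}\eqref{item:hyperbolic-reduction-embedding}) this is exactly the point attached to $M$ by the isomorphism of Lemma~\ref{lem:hyperbolic-reduction}\eqref{item:hyperbolic-reduction-image}, since $\langle L,M\rangle\cap X=L\cup M$. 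Tracking $M$ through the projections over $[s_0:t_0]$ — it lies in the first tangent hyperplane, then drops dimension at each of the remaining $g-1$ steps, ending at the point $\langle L,M\rangle/L$ — and using the sign conventions of Lemma~\ref{lem:chow-onestep-reduction}\eqref{item:chow-onestep-reduction-CHQ} and Corollary~\ref{cor:Er-action-on-alg} (each dimension-dropping step contributes a factor $-1$), I would conclude that the contribution of $[s_0:t_0]$ to $(E^{(g-1)})_*(\P^1\times M)$ is $(-1)^{g-1}[\langle L,M\rangle]$.

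It then remains to show that the residual cycle, supported over $[s_0:t_0]$ and over the degeneration locus of $\phi$, lies in the image of $\Pic(\P^1\times\P^{g+1})\to\Pic(\mathcal{Q}^{(g-1)})$, i.e.\ is of the form $\mathcal{Q}^{(g-1)}\cdot D$; that image is spanned by the fibre class $\O(1,0)|_{\mathcal{Q}^{(g-1)}}$ and by $\O(0,1)|_{\mathcal{Q}^{(g-1)}}$. These extra terms are the exceptional contributions produced by the blow-up formula (Lemma~\ref{lem:chow-onestep-reduction}\eqref{item:chow-onestep-reduction-CHQ}) through the $g$ iterated blow-ups, together with the excess coming from the fact that $\P^1\times M$ meets the centre $\P^1\times L$ along $\P^1\times(L\cap M)$. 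I expect each of them to be an integral combination of fibres of $\phi^{(g-1)}$ and of hyperplane sections pulled back from the $\P^{g+1}$-factor — exactly what $\mathcal{Q}^{(g-1)}\cdot D$ allows — using, as in the proof of Lemma~\ref{lem:NS2-Q-g-2}, that the relative Fano scheme of lines of $\phi^{(g-2)}$ is a $\P^1$-bundle over $C$, so that lines in fibres of the quadric fibration are all algebraically comparable.

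The hard part will be this last step: carrying out the blow-up and excess-intersection bookkeeping precisely enough to (i) rule out any spurious contribution over $[s_0:t_0]$ beyond $(-1)^{g-1}[\langle L,M\rangle]$ that is not already of the form $\mathcal{Q}^{(g-1)}\cdot D$, and (ii) confirm that the contributions over the degenerate members of the pencil genuinely descend from $\P^1\times\P^{g+1}$ rather than being arbitrary divisor classes on $\mathcal{Q}^{(g-1)}$. A concrete alternative, in the spirit of Section~\ref{sec:Q^(r)-construction}, would be to perform the whole computation in the coordinates there, where the class $D$ should emerge explicitly from the forms $l_{ij}$ and $q_i$.
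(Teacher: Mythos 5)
Your overall strategy (decompose $E^{(g-1)}$ into the one-point reductions $E_0,\dots,E_{g-1}$ and track the cycle) is the same as the paper's, but your key structural choice is the opposite of the one that makes the computation work, and the step it is supposed to enable fails. You choose the spanning points $p_0,\dots,p_{g-1}$ of $L$ so that \emph{none} lies on $M$, and claim that over the generic point of $\P^1$ the image of $M$ drops in dimension at every step and eventually becomes empty, so that the whole class is supported over finitely many special fibers with the main term appearing cleanly over $[s_0:t_0]$. This is not what happens. Since $L$ is isotropic for every member of the pencil, $b_{s,t}(p_0,x)=0$ for all $x\in L\cap M$, so generically $M\cap p_0^{\perp}=L\cap M$; and because $p_0\notin M$ one has $\langle p_0,\,L\cap M\rangle=L$, so the horizontal part of $(E_0)_*(\P^1\times M)$ is supported on $\P^1\times \pi_{p_0}(L)$, i.e.\ on the image of $L$ itself. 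That cycle contains the images of all the remaining centers $p_1,\dots,p_{g-1}$, so every subsequent $(E_i)_*$ is a degenerate pushforward of a cycle through its blow-up center (at the last step you are pushing forward, up to sign and corrections, exactly the section given by the image of $p_{g-1}$). In particular the generic-fiber picture does not "empty out,'' the term $(-1)^{g-1}[\langle L,M\rangle]$ is not produced by a fiberwise tracking over $[s_0:t_0]$ as you describe, and the sign count "one factor $-1$ per dimension drop'' has no justified cycle-theoretic meaning in this setup. What you defer to the "hard part'' is therefore not just the residual bookkeeping: it includes the main term itself, and also the fact that the correction class $D$ can be taken \emph{independent of} $M$, which is what Proposition~\ref{prop-fanoch2s} needs for the see-saw argument and which your outline never addresses. (A small additional point: the freedom to move the $p_i$ is justified because $E^{(g-1)}$ and $\mathcal{Q}^{(g-1)}$ depend only on $L$ while Lemma~\ref{lem:chow-onestep-reduction}\eqref{item:chow-onestep-reduction-exc-corresp} holds for any spanning set, not by Lemma~\ref{lem:hyperbolic-reduction}\eqref{item:hyperbolic-reduction-independence}, which only concerns the birational class.)

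The paper makes the adapted choice: $p_0,\dots,p_{g-2}$ are chosen to span $L\cap M$ (so they \emph{do} lie on $M$), and $p_{g-1}\in L\setminus M$. With this choice the cycle stays linear at every stage: one proves inductively that $(E^{(i)})_*(\P^1\times M)=(-1)^{i+1}V^{(i)}_M$ plus explicit classes restricted from the ambient $\P^1\times\P^{2g-i}$, where $V^{(i)}_M=\P^1\times\bigl(M/\langle p_0,\dots,p_i\rangle\bigr)$; for $i=g-2$ this is a section of $\phi^{(g-2)}$, and the final correspondence $E_{g-1}$ sends it to the point of $\mathcal{Q}^{(g-1)}$ corresponding to $\langle L,M\rangle$ (the unique member of the pencil containing this $g$-plane), yielding both the sign $(-1)^{g-1}$ and an explicit $D=\sum_{j=0}^{g-2}(-1)^{j+1}((j+1)H_1-H_2)$ that is visibly independent of $M$. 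If you want to salvage your generic-position variant you would have to carry out the degenerate blow-up/excess computations at every step after the first, which is strictly harder than the paper's induction; the efficient fix is to adapt the choice of the $p_i$ to $M$ as above.
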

\begin{proof}
Fix a $(g-1)$-plane $M$ on $X$ such that $\dim L\cap M=g-2$.
Choose $k$-points $p_0,\dots, p_{g-1}\in L$ such that $L\cap M=\langle p_0,\dots, p_{g-2}\rangle$ and $L=\langle p_0,\dots, p_{g-1}\rangle$.
We inductively show that
for every $i\in\left\{0,\dots, g-2\right\}$
\[
(E^{(i)})_*(\P^1\times M)=(-1)^{i+1}V^{(i)}_M + \mathcal{Q}^{(i)}\cdot (\sum_{j=0}^i(-1)^{j+1}((j+1)H_1\cdot H_2^{g-i-1}-H_2^{g-i}))
\]
in $\CH^{g-i}(\mathcal{Q}^{(i)})$,
where $V^{(i)}_M$ is the subvariety of $\mathcal{Q}^{(i)}$ corresponding to $\P^1\times M$, and $H_1, H_2$ are the pull-backs of $\O(1)$ on $\P^1, \P^{2g-r}$ respectively.
This is a consequence of Lemma~\ref{lem:chow-onestep-reduction} and the equalities
\begin{align*}
&(E_i)_*V_M^{(i-1)}= - V_M^{(i)} + \mathcal{Q}^{(i)}\cdot(-(i+1)H_1^{g-i-1}\cdot H_2 + H_2^{g-i}),\\
&(E_i)_*(\mathcal{Q}^{(i-1)}\cdot (H_1\cdot H_2^{g-i}))=\mathcal{Q}^{(i)}\cdot (H_1\cdot H_2^{g-i-1}),\\
&(E_i)_*(\mathcal{Q}^{(i-1)}\cdot H_2^{g-i+1})=\mathcal{Q}^{(i)}\cdot H_2^{g-i},
\end{align*}
which may be directly verified.
Note that the last two formulas also hold for $i=g-1$.
Finally, using that there is a unique point $p\in \P^1$ such that $\langle L, M\rangle \subset \mathcal{Q}_p$,
\[
\langle L, M\rangle = (E_{g-1})_*V_M^{(g-2)},
\]
and we get the desired formula with
\[
D=\sum_{j=0}^{g-2}(-1)^{j+1}((j+1)H_1-H_2).
\]
\end{proof}

We are now ready to identify \(F_{g-1}(X)\) with an intermediate Jacobian torsor of \(\cal Q^{(g-2)}\).
\begin{prop}\label{prop-fanoch2s}
For the integer \(a\) in Lemma~\ref{lem:L-to-section},
there exists an isomorphism of 
$k$-schemes
\[F_{g-1}(X)\xrightarrow{\sim}(\bCH^2_{\mathcal{Q}^{(g-2)}/k})^{s+af}.\]
\end{prop}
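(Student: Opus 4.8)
The plan is to realize the isomorphism directly from the correspondence \(E^{(g-2)}\), in the spirit of the proofs of Lemma~\ref{lem:ch2-ll'-isom} and Proposition~\ref{prop-pic1ch2f}. The map should send a \((g-1)\)-plane \(L\) on \(X\) to the curve class \((E^{(g-2)})_*(\P^1\times L)\) on the threefold \(\mathcal{Q}^{(g-2)}\), which by Lemma~\ref{lem:L-to-section} has class \(s+af\).

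\emph{Step 1: constructing the morphism.} Let \(\mathcal{L}\subset F_{g-1}(X)\times X\) be the universal \((g-1)\)-plane. Since \(\P^1\times X\subset\mathcal{Q}\), for every \((g-1)\)-plane \(L\) on \(X\) we have \(\P^1\times L\subset\mathcal{Q}\), so \(\mathcal{L}\) gives a closed subscheme of \(F_{g-1}(X)\times\mathcal{Q}\), flat over \(F_{g-1}(X)\), whose fibres are the codimension-\((g+1)\) cycles \(\P^1\times L\). I would then apply the correspondence \(E^{(g-2)}\subset\mathcal{Q}\times\mathcal{Q}^{(g-2)}\) fibrewise — using the iterated-blow-up description of Lemma~\ref{lem:chow-onestep-reduction} over the dense open locus of \((g-1)\)-planes in general position with respect to \(\ell\) (cf. the proof of Theorem~\ref{thm:symmetric}), where the relevant intersections are proper — to get a flat family of \(1\)-cycles on \(\mathcal{Q}^{(g-2)}\), and hence, by representability of \(\CH^2_{\mathcal{Q}^{(g-2)}/k,\fppf}\) (Section~\ref{sec:CH2-scheme}, \cite[Theorem 3.1]{BW-IJ}), a morphism from this open locus to \(\bCH^2_{\mathcal{Q}^{(g-2)}/k}\), \(L\mapsto(E^{(g-2)})_*(\P^1\times L)\). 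By Lemma~\ref{lem:L-to-section} together with the geometric connectedness of \(F_{g-1}(X)\) (Lemma~\ref{lem:fanotorsor}), its image meets only the component \((\bCH^2_{\mathcal{Q}^{(g-2)}/k})^{s+af}\); since the morphism is defined over \(k\), this component is \(G_k\)-stable, hence defined over \(k\), and it is a torsor under the abelian variety \((\bCH^2_{\mathcal{Q}^{(g-2)}/k})^0\). As a rational map from a smooth variety to a torsor under an abelian variety is a morphism, this extends uniquely to \(\Psi\colon F_{g-1}(X)\to(\bCH^2_{\mathcal{Q}^{(g-2)}/k})^{s+af}\).

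\emph{Step 2: \(\Psi\) is an isomorphism.} By Lemmas~\ref{lem:fanotorsor} and~\ref{lem:Zbetti3} and Section~\ref{sec:CH2-scheme}, both source and target are smooth projective geometrically integral \(k\)-varieties of dimension \(g\), each a torsor under an abelian variety; so it suffices to treat \(k=\kbar\), and after choosing base points \(\Psi\) becomes a homomorphism \(\overline{\Psi}\colon F_{g-1}(X)\to(\bCH^2_{\mathcal{Q}^{(g-2)}/k})^0\). First I would check that \(\overline{\Psi}\) is an isogeny: the analogue of Corollary~\ref{cor:Er-action-on-alg} in \(\ell\)-adic cohomology (the blow-up formulas of Lemma~\ref{lem:chow-onestep-reduction} hold verbatim there) shows that the chain of correspondences \(X\rightsquigarrow\mathcal{Q}\rightsquigarrow\mathcal{Q}^{(g-2)}\) induces an isomorphism between the \(2g\)-dimensional spaces \(H^1\) of the two abelian varieties, identified via the universal plane and Reid \cite[Theorem 4.8]{Reid-thesis} with \(H^{2g-1}(X)\) and with \(H^3(\mathcal{Q}^{(g-2)})\) respectively; this map is exactly \(\overline{\Psi}_*\) on \(H^1\), so \(\overline{\Psi}\) is an isogeny. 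Then I would upgrade this to an isomorphism by showing \(\overline{\Psi}\) respects the principal polarizations — the one on \((\bCH^2_{\mathcal{Q}^{(g-2)}/k})^0\) coming from the cup product on \(H^3(\mathcal{Q}^{(g-2)})\) (as in \cite{BW20}, cf. the end of the proof of Proposition~\ref{prop-pic1ch2f}), and the one on \(F_{g-1}(X)\cong\bPic^0_{C/k}\) coming from the cup product on \(H^{2g-1}(X)\) — by a cup-product computation in the style of Proposition~\ref{prop-pic1ch2f}, using the transpose correspondence \(E^{(g-2)T}\) from Corollary~\ref{cor:Er-action-on-alg} to rewrite \((\overline{\Psi}_*\alpha)\cup(\overline{\Psi}_*\beta)\) in terms of \(\alpha\cup\beta\). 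Since a polarized isogeny of principally polarized abelian varieties is an isomorphism, \(\Psi\) would then be an isomorphism of \(k\)-schemes.

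\emph{Main obstacle.} I expect the delicate point to be the last one — showing \(\overline{\Psi}\) is an isomorphism and not merely an isogeny. The plan is to do this via the polarization/cup-product identity above, which requires pinning down the cohomological action of the transpose correspondence \(E^{(g-2)T}\) and its compatibility with the intersection pairings on the middle cohomologies of \(X\) and of \(\mathcal{Q}^{(g-2)}\). Alternatively, one can try to prove directly that \(L\mapsto[\P^1\times L]\) is injective on \(\kbar\)-points: using \(\mathcal{Q}=\Bl_X\P^{2g+1}\) with exceptional divisor \(E_X\cong\P^1\times X\) (since \(N_{X/\P^{2g+1}}\cong\mathcal{O}_X(2)^{\oplus 2}\)) together with Corollary~\ref{cor:Er-action-on-alg}, one reduces to injectivity of the Abel–Jacobi map \(F_{g-1}(X_{\kbar})\to\CH^g(X_{\kbar})_{\alg}\), which should follow from Reid's description of the intermediate Jacobian of \(X\) \cite{Reid-thesis}.
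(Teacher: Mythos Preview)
Your Step~1 is essentially the construction the paper gives: the morphism is induced by the correspondence \(E^{(g-2)}\circ(\P^1\times X)\circ U_{g-1}(X)\), and Lemma~\ref{lem:L-to-section} identifies the target component.

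Your Step~2, however, diverges from the paper's argument. The paper does \emph{not} use polarizations at all to prove \(\overline{\Psi}\) is an isomorphism. Instead, after passing to \(\kbar\), it uses Lemma~\ref{lem:ch2-ll'-isom} to arrange that \(\ell\) is contained in some \((g-1)\)-plane \(L\in F_{g-1}(X)(k)\); picking a point of \(L\setminus\ell\) then gives one further hyperbolic reduction step \((E_{g-1})_*\colon(\bCH^2_{\mathcal{Q}^{(g-2)}/k})^0\xrightarrow{\sim}\bPic^0_{C/k}\). On the other side, Lemma~\ref{lem:hyperbolic-reduction}\eqref{item:hyperbolic-reduction-image} furnishes a morphism \(C\to F_{g-1}(X)\). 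The key computation is then that the composite
\[
\bPic^0_{C/k}\to\bAlb_{F_{g-1}(X)/k}\xrightarrow{\overline{\Psi}}(\bCH^2_{\mathcal{Q}^{(g-2)}/k})^0\xrightarrow{(E_{g-1})_*}\bPic^0_{C/k}
\]
equals \((-1)^{g-1}\cdot\id\); this is exactly what Lemma~\ref{lem:L-to-point} is designed to do, by computing \((E^{(g-1)})_*(\P^1\times M)\) explicitly as \((-1)^{g-1}\langle L,M\rangle\) up to a constant divisor. Since the outer maps are between \(g\)-dimensional abelian varieties, \(\overline{\Psi}\) is forced to be an isomorphism. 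This is more self-contained than your route: it uses from Reid only that \(F_{g-1}(X_{\kbar})\) is an abelian variety, not the full PPAV statement.

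Regarding your proposed polarization argument, there is a gap in the outline: \(\overline{\Psi}\) is not just \(E^{(g-2)}\) but the composite with \(U_{g-1}(X)\) and the exceptional-divisor inclusion \(\P^1\times X\hookrightarrow\mathcal{Q}\), so rewriting \((\overline{\Psi}_*\alpha)\cup(\overline{\Psi}_*\beta)\) via \(E^{(g-2)T}\) alone does not suffice---you must also control the cup-product behaviour of the universal-family piece, which amounts to invoking Reid's theorem that the Abel--Jacobi map \(\bAlb_{F_{g-1}(X)}\to J(X)\) is a \emph{polarized} isomorphism, and you must track the sign contributed by the blow-up step (as in the end of the proof of Proposition~\ref{prop-pic1ch2f}). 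Your alternative via injectivity of the Abel--Jacobi map \(F_{g-1}(X_{\kbar})\to\CH^g(X_{\kbar})_{\alg}\) is correct in principle but again leans on the full strength of Reid's result. Either approach can be completed, but the paper's trick of extending \(\ell\subset L\) and reducing one step further to \(C\) via Lemma~\ref{lem:L-to-point} is both shorter and requires less external input.
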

\begin{proof}
Let \(U_{g-1}(X)\subset F_{g-1}(X)\times X\) denote the universal family associated to \(F_{g-1}(X)\).
In addition, embed $\P^1\times X$ into $X\times \mathcal{Q}$ by the projection $\P^1\times X\rightarrow X$ and the inclusion $\P^1\times X\subset\mathcal{Q}\subset \P^1\times \P^N$.
Then, if \(L\) is a \((g-1)\)-plane on \(X\), the image of its class under \(((\mathbb P^1\times X)\circ U_{g-1}(X))_*\colon \CH_0(F_{g-1}(X)) \to \CH^{g+1}(\mathcal Q)\) is the class of \(\mathbb P^1\times L\).
Recall that we view \(E^{(g-2)}\) as a correspondence on \(\mathcal Q\times\mathcal Q^{(g-2)}\). Then,
by Lemma~\ref{lem:L-to-section},
$E^{(g-2)}\circ (\P^1\times X)\circ U_{g-1}(X)$ induces a morphism of $k$-schemes
\[
(E^{(g-2)}\circ (\P^1\times X)\circ U_{g-1}(X))_*\colon
F_{g-1}(X)\rightarrow (\bCH^2_{\mathcal{Q}^{(g-2)}/k})^{s+af}.
\]
We aim to show that this is an isomorphism.
By 
Lemma~\ref{lem:fanotorsor},
it is enough for us to show that
\[
(E^{(g-2)}\circ (\P^1\times X)\circ U_{g-1}(X))_*\colon \bAlb_{F_{g-1}(X)/k}\rightarrow (\bCH^2_{\mathcal{Q}^{(g-2)}/k})^0
\]
is an isomorphism of abelian varieties.
To verify this, we may assume \(k\) is algebraically closed.
As in the proof of Lemma~\ref{lem:Zbetti3},
we may assume that $\ell$ is contained in some $(g-1)$-plane $L$ on $X$,
and after choosing a point in $L\setminus \ell$ and identifying $\mathcal{Q}^{(g-1)}$ with $C$ by Lemma~\ref{lem:hyperbolic-reduction}\eqref{item:hyperbolic-reduction-max-min},
we get an isomorphism of principally polarized abelian varieties
\[
(E_{g-1})_*\colon (\bCH^2_{\mathcal{Q}^{(g-2)}/k})^0\xrightarrow{\sim}\bPic_{C/k}^0.
\]
Moreover, 
Lemma~\ref{lem:hyperbolic-reduction}\eqref{item:hyperbolic-reduction-image}
yields a morphism
$C\rightarrow F_{g-1}(X)$, which induces
\[
\bPic_{C/k}^0\rightarrow \bAlb_{F_{g-1}(X)/k}.
\]
We claim that the composition
\[
\bPic^0_{C/k}\rightarrow \bAlb_{F_{g-1}(X)/k}\xrightarrow{(E^{(g-2)}\circ (\P^1\times X)\circ U_{g-1}(X))_*}(\bCH^2_{\mathcal{Q}^{(g-2)}/k})^0\xrightarrow{(E_{g-1})_*}\bPic^0_{C/k},
\]
which by Lemma~\ref{lem:chow-onestep-reduction} equals the composition
\begin{align}\label{eq-composition}
\bPic^0_{C/k}\rightarrow \bAlb_{F_{g-1}(X)/k}\xrightarrow{(E^{(g-1)}\circ (\P^1\times X)\circ U_{g-1}(X))_*}\bPic^0_{C/k},
\end{align}
is $(-1)^{g-1}$ times the identity, hence an isomorphism. Since $\dim \bPic_{C/k}^0=\dim \bAlb_{F_{g-1}(X)/k}=g$ 
by Lemma~\ref{lem:fanotorsor},
this will conclude the proof.

We may identify a given point of $C$ with a $(g-1)$-plane $M$ on $X$ such that $\dim L\cap M=g-2$, and also with a $g$-plane on a fiber of $\mathcal{Q}\rightarrow \P^1$ and containing $L$, where they correspond by 
$M\mapsto \langle L, M\rangle$.
The map in \eqref{eq-composition} is well-defined on the Picard group of $C$. By Lemma~\ref{lem:L-to-point},
the map may be described as
\[
M \mapsto (-1)^{g-1} \langle L, M\rangle + C\cdot D,
\]
where $D$ is a constant divisor class on $\P^1\times \P^{g+1}$
(recall that we identify \(C\) with \(\mathcal Q^{(g-1)} \subset\mathbb P^1\times\mathbb P^{g+1}\)).
The see-saw theorem then shows that the map in \eqref{eq-composition} is induced by a correspondence of the form
\[
(-1)^{g-1}\Delta_C + \beta\times C + C\times \gamma,
\]
where $\beta,\gamma \in \Pic(C)$.
(See also \cite[Corollary 4.12]{Reid-thesis}.)
Since $\beta\times C$ and $C\times \gamma$ act trivially on $\bPic^0_{C/k}$, this finishes the proof.
\end{proof}

The following shows that
the obstruction to the existence of a $(g-1)$-plane over $k$ is of order $4$.

\begin{lem}\label{lem:order4obstruction}
The following statements hold.
\begin{enumerate}
\item\label{item:order4obstruction-coker}
The cokernel of $\iota^*\colon \NS^2(\P^1_\kbar\times_\kbar \P^{g+2}_\kbar)\rightarrow \NS^2(\mathcal{Q}^{(g-2)}_{\kbar})$ is generated by \(s\) and is isomorphic to $\Z/4$. The equivalence $f\equiv 2s$ holds in this cokernel. 
\item\label{item:order4obstruction-torsors} As torsors,
$(\bCH^2_{\mathcal{Q}^{(g-2)}})^{f}$ and $(\bCH^2_{\mathcal{Q}^{(g-2)}})^{2s}$ are $k$-isomorphic, and $(\bCH^2_{\mathcal{Q}^{(g-2)}})^{2f} \cong (\bCH^2_{\mathcal{Q}^{(g-2)}})^{4s}$ is split over \(k\).
\end{enumerate}
\end{lem}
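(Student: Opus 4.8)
The plan is to prove part~\eqref{item:order4obstruction-coker} by an explicit intersection-theoretic computation in $\P^1_\kbar\times_\kbar\P^{g+2}_\kbar$, and then to deduce part~\eqref{item:order4obstruction-torsors} formally from it, using the additivity of the intermediate Jacobian torsors recalled in Section~\ref{sec:CH2-scheme} together with the fact that a class in the image of $\iota^*$ is represented by a $k$-rational cycle.

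For part~\eqref{item:order4obstruction-coker}, write $H_1,H_2$ for the pullbacks to $\P^1\times\P^{g+2}$ of the hyperplane classes of the two factors, so that $H_1^2=0$ and $\NS^2(\P^1_\kbar\times_\kbar\P^{g+2}_\kbar)$ is freely generated by $H_1H_2$ and $H_2^2$. The explicit equations of Section~\ref{sec:Q^(r)-construction} present $\mathcal{Q}^{(g-2)}$ as the complete intersection of $g-1$ divisors of type $(1,1)$ and one divisor of type $(1,2)$ in $\P^1\times\P^{g+2}$, so
\[
[\mathcal{Q}^{(g-2)}]=(H_1+H_2)^{g-1}(H_1+2H_2)=(2g-1)H_1H_2^{g-1}+2H_2^g .
\]
Using $\iota_*s=H_2^{g+2}$ and $\iota_*f=H_1H_2^{g+1}$ from Lemma~\ref{lem:NS2-Q-g-2} and the projection formula, I would first record $s\cdot\iota^*H_1=1$, $f\cdot\iota^*H_1=0$, $s\cdot\iota^*H_2=0$, and $f\cdot\iota^*H_2=1$. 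Then, again by the projection formula, one computes $\iota^*(H_1H_2)\cdot\iota^*H_1=\deg(H_1^2H_2\cdot[\mathcal{Q}^{(g-2)}])=0$, $\iota^*(H_1H_2)\cdot\iota^*H_2=\iota^*(H_2^2)\cdot\iota^*H_1=\deg(H_1H_2^2\cdot[\mathcal{Q}^{(g-2)}])=2$, and $\iota^*(H_2^2)\cdot\iota^*H_2=\deg(H_2^3\cdot[\mathcal{Q}^{(g-2)}])=2g-1$; comparing with the four numbers above yields $\iota^*(H_1H_2)=2f$ and $\iota^*(H_2^2)=2s+(2g-1)f$ in $\NS^2(\mathcal{Q}^{(g-2)}_\kbar)=\Z s\oplus\Z f$. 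Hence the image of $\iota^*$ is the subgroup $\langle 2f,\,2s+(2g-1)f\rangle$, which has index $4$ with cyclic quotient (since $2g-1$ is odd); one then checks directly that this quotient is generated by the class of $s$ and that $f\equiv 2s$ there. This proves~\eqref{item:order4obstruction-coker}; in particular $2f$, $f-2s$, and $4s$ all lie in the image of $\iota^*$.

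For part~\eqref{item:order4obstruction-torsors}, I would first note that the $G_k$-action on $\NS^2(\mathcal{Q}^{(g-2)}_\kbar)=\Z s\oplus\Z f$ is trivial, because $\iota_*$ identifies it $G_k$-equivariantly with $\NS^{g+2}$ of a product of projective spaces; thus every $\gamma=ms+nf$ defines a torsor over $k$ and the additivity $[(\bCH^2)^{\gamma}]+[(\bCH^2)^{\gamma'}]=[(\bCH^2)^{\gamma+\gamma'}]$ applies. The key point is that whenever $\gamma=\iota^*\delta$ lies in the image of $\iota^*$, the torsor $(\bCH^2_{\mathcal{Q}^{(g-2)}})^{\gamma}$ is split over $k$: the class $\delta\in\CH^2(\P^1\times\P^{g+2})$ is a $\Z$-combination of the classes of $\{\mathrm{pt}\}\times\P^{g+1}$ and $\P^1\times\Lambda$ (with $\Lambda$ a codimension-$2$ linear subspace), all defined over $k$, so $\iota^*\delta$ lifts to a $k$-rational class in $\CH^2(\mathcal{Q}^{(g-2)})$ and hence, by \cite[Theorem 3.1]{BW-IJ}, to a $k$-point of $(\bCH^2_{\mathcal{Q}^{(g-2)}})^{\gamma}$. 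Applying this to $f-2s$ and invoking additivity gives $[(\bCH^2)^{f}]=[(\bCH^2)^{2s}]+[(\bCH^2)^{f-2s}]=[(\bCH^2)^{2s}]$; doubling gives $[(\bCH^2)^{2f}]=[(\bCH^2)^{4s}]$, and this class is split because $2f=\iota^*(H_1H_2)$.

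I expect the only delicate point to be the bookkeeping in part~\eqref{item:order4obstruction-coker}: one must correctly match the abstract generators $s,f$ of $\NS^2(\mathcal{Q}^{(g-2)}_\kbar)$ from Lemma~\ref{lem:NS2-Q-g-2} to intersection numbers against $\iota^*H_1$ and $\iota^*H_2$ — in particular $s$ meets a hyperplane section in $0$ points, which is slightly counterintuitive — and then confirm that the resulting relation matrix has determinant $\pm4$ with coprime entries, so that the cokernel is exactly $\Z/4$. Everything else is a routine computation in the Chow ring of $\P^1\times\P^{g+2}$ or a formal manipulation of torsor classes.
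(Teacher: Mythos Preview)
Your proposal is correct and follows essentially the same approach as the paper: the paper's proof simply records the two images $\iota^*(H_2^2)=2s+(2g-1)f$ and $\iota^*(H_1H_2)=2f$ without justification and declares part~\eqref{item:order4obstruction-torsors} immediate from~\eqref{item:order4obstruction-coker} and the properties of $\bCH^2$, whereas you supply the intersection-theoretic derivation of these formulas and spell out which torsor properties are used. The extra detail you give (computing $[\mathcal{Q}^{(g-2)}]$, pairing against $\iota^*H_1,\iota^*H_2$, and noting that classes in the image of $\iota^*$ are represented by $k$-rational cycles) is exactly the verification the paper omits.
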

\begin{proof}
Using that \(\mathcal Q^{(g-2)} \subset\mathbb P^1\times\mathbb P^{g+2}\) is a complete intersection of $g-1$ divisors of type $(1,1)$ and a divisor of type $(1,2)$,
part~\eqref{item:order4obstruction-coker} follows by observing that
\[\P^1 \times \P^g \mapsto 2s+(2g-1)f, \quad
* \times \P^{g+1} \mapsto 2f\]
under 
$\NS^2(\P^1_\kbar\times_\kbar \P^{g+2}_\kbar)\rightarrow \NS^2(\mathcal{Q}^{(g-2)}_{\kbar})$.
Part~\eqref{item:order4obstruction-torsors} is immediate from part~\eqref{item:order4obstruction-coker} and properties of the \(\bCH^2\)-scheme (see Section~\ref{sec:CH2-scheme}).
\end{proof}

Finally, we are ready to prove Theorem~\ref{maximallinearspace}.
\begin{proof}[Proof of Theorem~\ref{maximallinearspace}]
The backward direction follows from Proposition~\ref{HT}, applied for $r=g-2$.
We show the forward direction.
If $\mathcal{Q}^{(g-2)}$ is $k$-rational, then
Theorem~\ref{thm-BWIJ} and Propositions~\ref{prop-pic1ch2f} and~\ref{prop-fanoch2s} imply that
there exists $d\in \Z$ such that
\begin{align}\label{eq-fano-n}
[F_{g-1}(X)]=[(\bCH^2_{\mathcal{Q}^{(g-2)}/k})^{s+af}]=[\bPic_{C/k}^d].
\end{align}
On the other hand, by Propositions~\ref{prop-pic1ch2f} and~\ref{prop-fanoch2s},
Lemma~\ref{lem:order4obstruction}\eqref{item:order4obstruction-torsors}, and additivity of \((\bCH^2)^0\)-torsors (see Section~\ref{sec:CH2-scheme}), we have
\[
2[F_{g-1}(X)]=[(\bCH^2_{\mathcal{Q}^{(g-2)}/k})^{2s+2af}]=[(\bCH^2_{\mathcal{Q}^{(g-2)}/k})^f]=[\bPic_{C/k}^1],
\]
which in turn implies
\begin{align}\label{eq-fano-1-n}
[F_{g-1}(X)]=[\bPic_{C/k}^{1-d}].
\end{align}
Since \(C\) is hyperelliptic, \([\bPic_{C/k}^{2m}]=0\) for any integer \(m\).
Since the parities of $d$ and $1-d$ are distinct,  
the equalities~\eqref{eq-fano-n} and~\eqref{eq-fano-1-n} imply 
$[F_{g-1}(X)]=0$, i.e., $F_{g-1}(X)$ has a \(k\)-point,
completing the proof.
\end{proof}

\begin{rem}
Let $\alpha_X \in \Br(C)$ be the Brauer class associated to the even Clifford algebra of $\phi\colon \mathcal{Q}\rightarrow \P^1$. Such Brauer classes of even Clifford algebras arise in the context of derived categories and rationality problems, see, e.g., \cite{ABB14}. We now state and prove another version of Theorem~\ref{maximallinearspace} involving this class \(\alpha_X\):

\begin{thm}\label{thm:clifford-algebra}
Over a field $k$ of characteristic $\neq 2$, fix $g\geq 2$, and let $X$ be a smooth complete intersection of two quadrics in $\P^{2g+1}$.
The following conditions are equivalent:
\begin{enumerate}
\item\label{item:thm-clifford-maximal-plane} $F_{g-1}(X)(k)\neq \emptyset$;
\item\label{item:thm-clifford-br-trivial} $\alpha_X =0$ in $\Br(C)$;
\item\label{item:thm-clifford-rational} $F_{g-2}(X)(k)\neq \emptyset$ and $\mathcal{Q}^{(g-2)}$ is $k$-rational;
\item\label{item:thm-clifford-section} $F_{g-2}(X)(k)\neq \emptyset$ and $\phi^{(g-2)}\colon \mathcal{Q}^{(g-2)}\rightarrow \P^1$ has a section;
\item\label{item:thm-clifford-fano-section} $F_{g-2}(X)(k)\neq \emptyset$ and $p\colon F_1(\mathcal{Q}^{(g-2)}/\P^1)\rightarrow C$ has a section.
\end{enumerate}
\end{thm}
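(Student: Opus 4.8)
The plan is to establish $(1)\Leftrightarrow(3)\Leftrightarrow(4)$ essentially for free from what is already proved, then to identify $\alpha_X$ as the obstruction measured by $(4)$ and $(5)$, and finally to upgrade this to an equivalence with $(2)$. First, $(1)\Leftrightarrow(3)$ is precisely Theorem~\ref{maximallinearspace}. If $(1)$ holds, choose $L\in F_{g-1}(X)(k)$; any $k$-rational hyperplane of $L\cong\P^{g-1}_k$ is a $(g-2)$-plane on $X$, so $F_{g-2}(X)(k)\neq\emptyset$, and then Proposition~\ref{HT} with $r=g-2$ produces a section of $\phi^{(g-2)}$, which is $(4)$. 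Conversely $(4)\Rightarrow(1)$ is contained in Proposition~\ref{HT}. Hence $(1)\Leftrightarrow(3)\Leftrightarrow(4)$, and it remains to prove $(4)\Leftrightarrow(5)\Leftrightarrow(2)$.

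Next, assuming $F_{g-2}(X)(k)\neq\emptyset$ so that $\phi^{(g-2)}\colon\mathcal{Q}^{(g-2)}\to\P^1$ is defined, I would show that both $(4)$ and $(5)$ are equivalent to $\alpha_X=0$ in $\Br(C)$. The fibration $\mathcal{Q}^{(g-2)}\to\P^1$ is a quadric surface bundle with simple degeneration along the same divisor as $\mathcal{Q}\to\P^1$; its even Clifford algebra is an Azumaya quaternion algebra $\mathcal{C}_0$ on $C$, and since hyperbolic reduction preserves the even Clifford algebra up to Morita equivalence (a standard property, cf.\ \cite{ABB14,KuznetsovShinder18}), the class of $\mathcal{C}_0$ is $\alpha_X$. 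Moreover $p\colon F_1(\mathcal{Q}^{(g-2)}/\P^1)\to C$ is the Severi--Brauer scheme of $\mathcal{C}_0$, so $(5)$ holds if and only if $\alpha_X=0$ in $\Br(C)$. For $(4)$: the generic fibre $Q$ of $\phi^{(g-2)}$ is a smooth quadric surface over $k(\P^1)$ whose discriminant quadratic algebra is $k(C)$, a field because $C$ is geometrically connected. Through any $k(\P^1)$-point of $Q$ pass exactly two lines, which form a Galois-stable pair and hence are individually defined over $k(C)$, the extension that interchanges the two rulings; conversely a line on $Q$ defined over $k(C)$ and its $\Gal(k(C)/k(\P^1))$-conjugate lie in opposite rulings, so they meet in a single $k(\P^1)$-rational point. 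Thus $Q$ has a $k(\P^1)$-point if and only if it has a line over $k(C)$, if and only if $\alpha_X|_{k(C)}=0$; by regularity of $C$ this is the same as $\alpha_X=0$ in $\Br(C)$, and since $\phi^{(g-2)}$ is proper over the smooth curve $\P^1$ a rational section is a section, so $(4)$ is equivalent to $\alpha_X=0$ as well. Consequently $(4)\Leftrightarrow(5)$, both being equivalent to ``$F_{g-2}(X)(k)\neq\emptyset$ and $\alpha_X=0$'', and in particular $(1)\Rightarrow(2)$.

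It remains to prove $(2)\Rightarrow(1)$, equivalently that $\alpha_X=0$ forces $F_{g-2}(X)(k)\neq\emptyset$; this is \emph{the crux}, because $\alpha_X$ is defined with no reference to linear subspaces of $X$. By the Amer--Brumer theorem it suffices to exhibit a $k(\P^1)$-rational totally isotropic $(g-1)$-dimensional subspace of the $(2g+2)$-variable generic fibre form $q$ of $\mathcal{Q}\to\P^1$, i.e.\ to bound the anisotropic kernel of $q$ over $k(\P^1)$ by dimension $4$. Here I would use that $\alpha_X$ is the class over the discriminant field $k(C)$ of the even Clifford algebra of $q$ (which, since $F_{g-2}(X_{\kbar})(\kbar)\neq\emptyset$, is geometrically Morita equivalent to a quaternion algebra and, via its canonical involution, has exponent $2$), together with the fact that $q$ becomes hyperbolic over $\kbar(C)$ (a direct computation: the rank-two geometric anisotropic kernel of $q$ has discriminant split by $\kbar(C)$), and the structure theory of even Clifford algebras, to conclude that $\alpha_X=0$ implies the anisotropic kernel of $q$ over $k(\P^1)$ has dimension at most $4$. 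Amer--Brumer then yields $F_{g-2}(X)(k)\neq\emptyset$, closing the cycle $(1)\Leftrightarrow(2)\Leftrightarrow(3)\Leftrightarrow(4)\Leftrightarrow(5)$.

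The main obstacle is exactly this implication $\alpha_X=0\Rightarrow F_{g-2}(X)(k)\neq\emptyset$: one must extract enough isotropy of the large generic quadric from the bare splitness of its even Clifford algebra, without already having a $(g-2)$-plane on $X$ in hand. An alternative route I would pursue in parallel is to identify the image of the torsor class $[F_{g-1}(X)]\in H^1(k,\bPic^0_{C/k})$ in $\Br(C)$ with $\alpha_X$, using the relation $2[F_{g-1}(X)]=[\bPic^1_{C/k}]$ and the component-group computation of Lemma~\ref{lem:order4obstruction}; combined with $[\bPic^2_{C/k}]=0$ (valid since $C$ is hyperelliptic over $k$, hence carries a $k$-rational divisor class of degree $2$) this would give $[F_{g-1}(X)]=0\Leftrightarrow\alpha_X=0$ directly, sidestepping the quadratic-form estimate.
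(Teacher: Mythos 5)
Most of your chain is fine and close in spirit to the paper: \((1)\Leftrightarrow(3)\) is Theorem~\ref{maximallinearspace}, \((1)\Leftrightarrow(4)\) is Proposition~\ref{HT}, and your identification of \((4)\) and \((5)\) with ``\(F_{g-2}(X)(k)\neq\emptyset\) and \(\alpha_X=0\)'' via the even Clifford algebra of \(\phi^{(g-2)}\) and the rulings of the generic quadric surface is a valid variant of the paper's argument (the paper quotes \cite[Proposition B.6]{ABB14} for \((5)\Rightarrow\alpha_X=0\) and proves \((4)\Leftrightarrow(5)\) by intersecting a line with its image under the hyperelliptic involution, which is essentially your rulings argument). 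This gives \((1)\Rightarrow(2)\).

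The genuine gap is \((2)\Rightarrow(1)\), exactly the step you flag as the crux, and neither of your proposed routes closes it. Your primary route asks that \(\alpha_X=0\) force the anisotropic kernel of the generic \((2g+2)\)-dimensional form \(q\) over \(k(\P^1)\) to have dimension at most \(4\), but you give no argument beyond ``structure theory of even Clifford algebras,'' and no such argument can exist at that level of generality: for quadratic forms over an arbitrary field, triviality of the discriminant and Clifford invariant does not bound the anisotropic dimension (anisotropic \(3\)-fold Pfister forms have dimension \(8\) and trivial classical invariants), so the claim must use the special structure of a pencil with simple degeneration over \(k(\P^1)\) in an essential way --- and extracting isotropy from that structure is precisely what the hard part of the paper (the intermediate Jacobian torsor machinery of Section~\ref{section-maximal}) accomplishes; indeed the theorem itself asserts the much stronger bound \(\dim q_{\mathrm{an}}\leq 2\). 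Your alternative route is likewise only a sketch: there is no natural map \(H^1(k,\bPic^0_{C/k})\to\Br(C)\) sending \([F_{g-1}(X)]\) to \(\alpha_X\) that you construct, and the relation \(2[F_{g-1}(X)]=[\bPic^1_{C/k}]\) together with \([\bPic^2_{C/k}]=0\) only shows \([F_{g-1}(X)]\) is \(4\)-torsion, not that it vanishes when \(\alpha_X\) does. The paper's actual proof of \((2)\Rightarrow(1)\) sidesteps all of this with a ``generic point'' trick: over \(K=k(F_{g-2}(X))\) the scheme \(F_{g-2}(X)\) tautologically has a point, so the already-established equivalences apply over \(K\) and show that the torsor \(F_{g-1}(X)\) splits over \(K\); since \(F_{g-2}(X)\) is \(\kbar\)-rational (Corollary~\ref{thm:fano-scheme-rational}\eqref{item:Ci}), the restriction \(H^1(k,\bPic^0_{C/k})\to H^1(K,\bPic^0_{C/k})\) is injective, whence \(F_{g-1}(X)(k)\neq\emptyset\). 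Some such descent device (or the full quadratic-form analysis you only gesture at) is needed; without it your proof does not establish \((2)\Rightarrow(1)\), and in particular does not cover the case where \(F_{g-2}(X)(k)=\emptyset\) is not known a priori.
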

\begin{proof}
 Theorem~\ref{maximallinearspace} shows that \eqref{item:thm-clifford-maximal-plane}\(\Leftrightarrow\)\eqref{item:thm-clifford-rational}, and \eqref{item:thm-clifford-maximal-plane}\(\Leftrightarrow\)\eqref{item:thm-clifford-section} holds by Proposition~\ref{HT}.
 Next, \eqref{item:thm-clifford-section}\(\Leftrightarrow\)\eqref{item:thm-clifford-fano-section} follows from the following geometric argument.
 If $S\subset \mathcal{Q}^{(g-2)}$ is a section for $\phi^{(g-2)}$, then the variety of isotropic lines of $\phi^{(g-2)}$ intersecting $S$ gives a section for $p$.
 Conversely, if $T\subset F_1(\mathcal{Q}^{(g-2)}/\P^1)$ is a section for $p$, define a rational map $T\dashrightarrow \mathcal{Q}^{(g-2)}$ as follows. Recall that \(-1\) denotes the hyperelliptic involution on \(C\). For each (\(\kbar\)-)line \(\lambda\) in a smooth fiber of \(\phi^{(g-2)}\), we send \(\lambda\) to the intersection point \(\lambda \cap (-1)^*\lambda\). This map $T\dashrightarrow \mathcal{Q}^{(g-2)}$ is defined over $k$ because the involution $-1$ is, and the image of \(T\) gives a section for $\phi^{(g-2)}$.

We show \eqref{item:thm-clifford-fano-section}$\Rightarrow$\eqref{item:thm-clifford-br-trivial}.
By \cite[Proposition B.6]{ABB14},
$\alpha_X$ equals the Brauer class corresponding to the smooth conic fibration $p\colon F_1(\mathcal{Q}^{(g-2)}/\P^1)\rightarrow C$.
Since the vanishing of the latter class is equivalent to the existence of the section, \eqref{item:thm-clifford-fano-section} is equivalent to: 
\begin{enumerate}
\setcounter{enumi}{5}
\item\label{item:br-trivial-g-2} $F_{g-2}(X)(k)\neq \emptyset$ and $\alpha_X=0$ in $\Br(C)$.
\end{enumerate}
Clearly, we have \eqref{item:br-trivial-g-2}$\Rightarrow$\eqref{item:thm-clifford-br-trivial},
hence the implication that we want.

It remains for us to show \eqref{item:thm-clifford-br-trivial}$\Rightarrow$\eqref{item:thm-clifford-maximal-plane}.
To achieve this, we use the ``generic point" trick (see \cite[Th\'eor\`eme 5.10]{CT-intersection-quadrics}). 
First, 
if we assume \(F_{g-2}(X)(k)\neq\emptyset\), 
then 
by \eqref{item:thm-clifford-maximal-plane}$\Leftrightarrow$\eqref{item:thm-clifford-fano-section}$\Leftrightarrow$\eqref{item:br-trivial-g-2},
we get $F_{g-1}(X)\neq \emptyset$.
For the general case, we consider the base change \(X\times_k k(F_{g-2}(X))\) to the function field of \(k(F_{g-2}(X))\). Then the previous case implies that \(X\times_k k(F_{g-2}(X))\) contains a \((g-1)\)-plane over \(k(F_{g-2}(X))\); equivalently, the torsor \(F_{g-1}(X)\) splits over \(k(F_{g-2}(X))\). 
But the map \(H^1(k, \bPic^0_C) \to H^1(k(F_{g-2}(X)), \bPic^0_C)\) is injective,
because \(F_{g-2}(X)\) is $\kbar$-rational by Corollary \ref{cor:fano-scheme-rational}\eqref{item:Ci} and any rational map over $k$ from a geometrically rationally connected variety to a torsor under an abelian variety is constant.
Hence \(F_{g-1}(X)\) is split over \(k\), completing the proof.
\end{proof}
\end{rem}

\begin{rem}\label{rem:sym-rational}
By Theorem~\ref{maximallinearspace} and Theorem~\ref{thm:symmetric},
the converse to Theorem~\ref{Fanoscheme} for $(r, N)=(g-2, 2g+1)$ would imply that $k$-rationality of $\mathcal{Q}^{(g-2)}$ is equivalent to $k$-rationality of its \((g-1)\)th symmetric power.
Constructing counterexamples to this latter statement seems to be
a subtle problem.
For instance, for any Severi--Brauer variety $V$ of dimension $n$ over $k$, $\Sym^{n+1}V$ is $k$-rational \cite[Theorem 1.4]{KrashenSaltman}.
However, we can show that $\mathcal{Q}^{(g-2)}$
is not birational to any non-trivial Severi--Brauer variety.
Indeed, by Lemma~\ref{lem:hyperbolic-reduction}\eqref{item:index},
there exists a zero-cycle of degree $1$ on 
$\mathcal{Q}^{(g-2)}$.
Since non-trivial Severi--Brauer varieties never admit a zero-cycle of degree $1$, it is now enough to note that the existence of a zero-cycle of degree $1$ is a birational invariant of smooth projective varieties over a field.
\end{rem}

\section{The even-dimensional case over \(\bb R\)}\label{sec:real-even}

In this section, we focus on the case of even-dimensional \(X \subset \bb P^{2g}\) defined over the real numbers \(\bb R\). First, we prove Theorem~\ref{thm:fano-R-rationality-connected-real-locus}. Then, in Section~\ref{sec:krasnov}, we recall an isotopy invariant that was studied by Krasnov \cite{krasnov-biquadrics} and that was previously used in \cite{HT-intersection-quadrics,HassettKoll'arTschinkel} to study \(\bb R\)-rationality of \(X\). In Section~\ref{sec:isotopy-consequences}, we use this invariant to observe several consequences that the isotopy class has for linear subspaces on \(X\). Finally, we use this to prove Corollary~\ref{cor:classification} and give examples.

\subsection{Rationality criterion for \(F_{g-2}(X)\)}
The maximal linear subspaces that \(X \subset \bb P^{2g}\) contains over \(\bb C\) are \((g-1)\)-planes. We first study the \(\bb R\)-rationality of the Fano scheme \(F_{g-2}(X)\) of second maximal linear subspaces, by proving the following more precise version of Theorem~\ref{thm:fano-R-rationality-connected-real-locus}.

\begin{thm}\label{thm:fano-R-rationality-connected-real-locus-precise}
Over the real numbers, fix $g\geq 2$, and let $X$ be a smooth complete intersection of two quadrics in $\P^{2g}$.
The following are equivalent:
\begin{enumerate}
\item\label{item:real-2nd-maximal-rationality} $F_{g-2}(X)$ is $\R$-rational;
\item\label{item:real-2nd-maximal-connected} $F_{g-2}(X)(\R)$ is non-empty and connected;
\item\label{item:real-2nd-maximal-Q-rationality} $F_{g-2}(X)(\R)$ is non-empty and $\mathcal{Q}^{(g-2)}$ is $\R$-rational;
\item\label{item:real-2nd-maximal-Q-connected} $F_{g-2}(X)(\R)$ is non-empty and $\mathcal{Q}^{(g-2)}(\R)$ is non-empty and connected.
\end{enumerate}
\end{thm}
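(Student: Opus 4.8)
The plan is to establish the cycle of implications $(3)\Rightarrow(1)\Rightarrow(2)\Rightarrow(4)\Rightarrow(3)$. Each of the four conditions implies $F_{g-2}(X)(\R)\neq\emptyset$ (for $(1)$ because a smooth projective $\R$-rational variety has a rational point), so I would fix $\ell\in F_{g-2}(X)(\R)$ and form the hyperbolic reduction $\phi^{(g-2)}\colon\mathcal{Q}^{(g-2)}\to\P^1$ as in Section~\ref{sec:Q^(r)-construction}. By Lemma~\ref{lem:hyperbolic-reduction}\eqref{item:hyperbolic-reduction-smooth} this is a smooth projective geometrically connected surface (here $r=g-2$, $N=2g$, so $N-2r-2=2>0$), and it is geometrically rational since it is a conic bundle over $\P^1$ (whose generic fiber acquires a rational point over $\C(\P^1)$ by Tsen); moreover $\mathcal{Q}^{(g-2)}(\R)\neq\emptyset$, by Lemma~\ref{lem:hyperbolic-reduction}\eqref{item:index} together with the elementary observation that over $\R$ a $0$-cycle of degree $1$ forces a rational point, since closed points with residue field $\C$ contribute even degree.

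The implications $(3)\Rightarrow(1)$ and $(4)\Rightarrow(3)$ are immediate from results already available. By Theorem~\ref{thm:symmetric}, applied with $N=2g$ and $r=g-2$ (so $N-2r-2=2>0$ and we are in case~(1) of that theorem, not case~(2), which requires $r=g-1$), $F_{g-2}(X)$ is $\R$-birational to $\Sym^{g-1}\mathcal{Q}^{(g-2)}$. Hence if $\mathcal{Q}^{(g-2)}$ is $\R$-rational, then so is $\Sym^{g-1}\mathcal{Q}^{(g-2)}$ by Mattuck's theorem \cite{Mattuck69}, and therefore so is $F_{g-2}(X)$; this is $(3)\Rightarrow(1)$. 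For $(4)\Rightarrow(3)$, the surface $\mathcal{Q}^{(g-2)}$ is geometrically rational, smooth, and projective over $\R$ with nonempty connected real locus, hence $\R$-rational by Comessatti's theorem \cite{Comessatti1913}.

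For $(1)\Rightarrow(2)$ and $(2)\Rightarrow(4)$ the key input is that the number of connected components of the real locus is a birational invariant of smooth projective varieties over $\R$; this follows from the weak factorization theorem together with the observation that blowing up a smooth center $Z$ leaves $\#\pi_0$ of the real locus unchanged (the blow-up is an isomorphism over the complement of $Z(\R)$, and the exceptional divisor maps onto $Z(\R)$ with connected fibers $\cong\P^{c}(\R)$). Since $\P^n(\R)$ is connected, $(1)\Rightarrow(2)$ follows. For $(2)\Rightarrow(4)$: the Hilbert scheme $\Hilb^{g-1}(\mathcal{Q}^{(g-2)})$ is smooth and projective, and via the Hilbert--Chow morphism to $\Sym^{g-1}\mathcal{Q}^{(g-2)}$ and Theorem~\ref{thm:symmetric} it is $\R$-birational to $F_{g-2}(X)$; hence $F_{g-2}(X)(\R)$ and $\Hilb^{g-1}(\mathcal{Q}^{(g-2)})(\R)$ have the same number of connected components. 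Thus $(2)\Rightarrow(4)$ reduces to the topological statement, which I would isolate as a separate lemma: \emph{for a smooth projective surface $S$ over $\R$ with $S(\R)\neq\emptyset$ and any $n\geq1$, the real locus $\Hilb^n(S)(\R)$ is connected if and only if $S(\R)$ is connected.} Combined with the nonemptiness of $\mathcal{Q}^{(g-2)}(\R)$ established above, this gives $(4)$.

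The main obstacle is this lemma. I would prove it using the Hilbert--Chow map $\Hilb^n(S)(\R)\to\Sym^n(S_{\C})^{\Gal(\C/\R)}$, decomposing a Galois-stable length-$n$ subscheme $Z\subset S_{\C}$ according to its support as $Z=\bigl(\bigsqcup_i Z_i\bigr)\sqcup Z_{\mathrm{conj}}$, where $Z_i$ is the part supported on the $i$th connected component $C_i$ of $S(\R)$, of some length $n_i$, and $Z_{\mathrm{conj}}$ is supported on conjugate pairs, of even length $n-\sum_i n_i$. Choosing pairwise disjoint Galois-stable tubular neighborhoods $V_i$ of the $C_i$ in $S_{\C}$, the degree $\deg(Z\cap V_i)\equiv n_i\pmod 2$ is locally constant, so the parity vector $(n_i\bmod 2)_i$ is an invariant of the connected component of $Z$; conversely, a conjugate pair can be slid into $V_i$ and split into two real points on $C_i$ (and two real points on $C_i$ can be collided and pushed off into a conjugate pair), so any two subschemes with the same parity vector lie in one component. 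This identifies $\#\pi_0\bigl(\Hilb^n(S)(\R)\bigr)$ with the number of realizable parity vectors, which is $1$ precisely when $S(\R)$ is connected. The delicate point will be the deformation argument that a length-$n_i$ subscheme supported on $C_i$ is connected, through $\R$-points of the Hilbert scheme, to a configuration consisting of at most one reduced real point together with conjugate pairs near $C_i$; this is a purely local analysis near $C_i$ inside $S_{\C}$, and I expect it to be the most technical part of the write-up.
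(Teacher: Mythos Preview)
Your proof is correct, and the overall cycle and the steps $(3)\Rightarrow(1)$, $(1)\Rightarrow(2)$, $(4)\Rightarrow(3)$ match the paper's argument essentially verbatim (the paper cites \cite[Theorem~13.3]{DK81} for the birational invariance of $\#\pi_0$ rather than invoking weak factorization, but this is cosmetic). The genuine difference is in the remaining implication. The paper proves $(2)\Rightarrow(3)$ by contraposition, exploiting the conic bundle structure $\phi^{(g-2)}\colon\mathcal{Q}^{(g-2)}\to\P^1$: first a short lemma shows that for any quadric fibration of positive relative dimension over $\R$, the real locus is connected if and only if its image in $\P^1(\R)$ is; then a second lemma shows that if this image is disconnected, so is the image of the induced map $(\Sym^{g-1}\mathcal{Q}^{(g-2)})(\R)\to(\Sym^{g-1}\P^1)(\R)\cong\P^{g-1}(\R)$, by an explicit analysis of which points of $\P^{g-1}(\R)$ lift. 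One then passes to $\Hilb^{g-1}$ via Fogarty and concludes. Your approach instead proves a general topological lemma about $\Hilb^n(S)(\R)$ for an arbitrary smooth projective real surface $S$, using the parity vector as a locally constant invariant.

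Two remarks on the comparison. First, for the implication you actually need---namely $S(\R)$ disconnected $\Rightarrow$ $\Hilb^n(S)(\R)$ disconnected---only the easy half of your lemma is required: the local constancy of the parity vector, together with the observation that when $S(\R)$ has at least two components there are at least two realizable parity vectors. The ``delicate'' converse (same parity $\Rightarrow$ same component) is not needed for the theorem, so you can drop that part entirely and avoid the local deformation analysis you flag as technical. Second, the paper's route through the conic bundle has a payoff beyond this theorem: tracking the image of $\phi^{(g-2)}(\R)$ in $\P^1(\R)$ is exactly what connects the problem to the Krasnov invariant (via the signatures along the pencil), and this is used later to show that $\R$-rationality of $F_{g-2}(X)$ is determined by the isotopy class of $X$. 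Your argument, being intrinsic to the surface, is cleaner and more general, but does not by itself yield that link.
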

One key difference between the even- and odd-dimensional cases is that here \(\cal Q^{(g-2)}\) is a surface, whereas it is a threefold when \(\dim X\) is odd (Section~\ref{sec:Q^(r)-construction}). As one might expect, the conclusion of Theorem~\ref{thm:fano-R-rationality-connected-real-locus-precise} fails when \(X\) has odd dimension (see Example~\ref{exmp:odd-Q^g-2-connected-irrational} and the preceding discussion). To prove Theorem~\ref{thm:fano-R-rationality-connected-real-locus-precise}, we will use properties of \(\bb R\)-rationality for surfaces.
We will first need several lemmas about the images of real points under quadric fibrations.

\begin{lem}\label{lem:Q-reallocus-connected}
Let $\phi\colon \mathcal{Q}\rightarrow \P^1$ be a quadric fibration of positive relative dimension over $\R$.
Then $\mathcal{Q}(\R)$ is connected if and only if the image of the induced map $\phi(\R)\colon \mathcal{Q}(\R)\rightarrow \P^1(\R)$ is connected.
\end{lem}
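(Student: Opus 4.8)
The plan is to analyze the fibers of $\phi(\bb R)\colon \cal Q(\bb R)\to \bb P^1(\bb R)$ and patch the resulting connectedness information together. The key point is that the real locus of a smooth quadric hypersurface over $\bb R$ of positive dimension is either empty or connected (for $\dim\geq 2$ a quadric with a point is rational, and one checks directly that the real locus is connected; for a conic it is a circle), while the real locus of a \emph{singular} fiber (which has corank $1$ by Lemma~\ref{lem:quadric-bundle-simple-degeneration}, since the degeneration divisor of $\phi$ is a reduced set of points on $\bb P^1$) is a cone over a lower-dimensional quadric and hence also connected (and nonempty). So \emph{every} fiber of $\phi(\bb R)$ over a point of $\bb P^1(\bb R)$ is either empty or connected.

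First I would establish the ``only if'' direction, which is immediate: if $\cal Q(\bb R)$ is connected then its continuous image $\phi(\cal Q(\bb R))$ is connected. For the ``if'' direction, suppose the image $I\coloneqq\phi(\cal Q(\bb R))\subseteq \bb P^1(\bb R)\cong S^1$ is connected, hence either all of $S^1$ or a (possibly degenerate) closed arc. Let $Z\subseteq \cal Q(\bb R)$ be a connected component; I want to show $Z=\cal Q(\bb R)$. Since $\phi$ is proper, $\phi(Z)$ is a closed connected subset of $I$; I claim it is also open in $I$, which forces $\phi(Z)=I$ by connectedness of $I$. Openness is where a local-triviality/continuity argument enters: near a point $b\in I$ over which the fiber is \emph{smooth}, $\phi$ is a smooth proper submersion on real points in a neighborhood, so $\cal Q(\bb R)\to I$ is a fiber bundle with connected fiber locally near $b$, and a component meeting one fiber meets all nearby fibers; near a point $b$ where the fiber degenerates (a cone), one uses that a small neighborhood of the cone point's real locus, together with the generic connected fibers limiting onto it, keeps everything in one component — more precisely the singular fiber $\cal Q_b(\bb R)$ is connected and every nearby real fiber is connected and nonempty and specializes to it, so again $\phi(Z)$ contains a neighborhood of $b$. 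Hence $\phi(Z)=I$.

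It then remains to rule out two distinct components $Z_1\neq Z_2$ both surjecting onto $I$. For this I would argue fiberwise: for each $b\in I$, the fiber $\cal Q_b(\bb R)$ is nonempty and connected, so it lies entirely in a single component of $\cal Q(\bb R)$; thus the function $b\mapsto(\text{component containing }\cal Q_b(\bb R))$ is well-defined on $I$, and by the openness/continuity discussion above it is locally constant, hence constant since $I$ is connected. Therefore all fibers lie in one component, so $\cal Q(\bb R)=\bigcup_{b\in I}\cal Q_b(\bb R)$ is connected.

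\textbf{Main obstacle.} The routine part is the smooth-fiber local triviality; the delicate step is the behavior of real points near a \emph{degenerate} fiber — showing that the connected generic fibers genuinely merge with the (connected) singular fiber rather than, say, approaching it from ``two sides'' and splitting off a component. I expect to handle this via the explicit local normal form of a simple (corank $1$) degeneration of a quadric fibration: locally $\phi$ looks like $\{u_0^2+\dots+u_{N-1}^2 = t\cdot(\pm 1)\}$ (up to signs in the quadratic part) over a parameter $t$, or more generally a family whose real points one can describe directly, and from this normal form one reads off that the total space of real points over a small arc around $b$ is connected. Once that local picture is in hand, the global patching via connectedness of $I\subseteq S^1$ is straightforward.
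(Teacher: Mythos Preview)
Your approach is correct in outline but more laborious than the paper's. Both proofs rest on the same basic fact---each real fiber $\phi(\R)^{-1}(p)$ is either empty or connected---but the paper exploits this more directly.

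The paper argues by contrapositive: if $U,V$ disconnect $\mathcal{Q}(\R)$, then fiber-connectedness forces each fiber to lie entirely in $U$ or entirely in $V$, so $\phi(\R)(U)$ and $\phi(\R)(V)$ are disjoint and cover the image. Since $\phi(\R)$ is a \emph{closed} map (properness) and $U,V$ are each other's complements (hence both closed in $\mathcal{Q}(\R)$), the images $\phi(\R)(U)$ and $\phi(\R)(V)$ are both closed, hence both open in the image, disconnecting it. This is just the standard fact that a closed map with connected fibers induces a bijection on connected components.

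The upshot is that your ``main obstacle''---analyzing how nearby real fibers merge with a degenerate one via a local normal form---simply does not arise. Openness of $\phi(Z)$ in the image follows formally from closedness of $\phi(\R)$ together with fiber-connectedness, with no need for Ehresmann-type local triviality or any case analysis at singular fibers. A minor side remark: you invoke Lemma~\ref{lem:quadric-bundle-simple-degeneration} to ensure corank-$1$ degeneration, but that lemma requires $\mathcal{Q}$ smooth, which is not part of the hypotheses here. Fortunately the connectedness of real fibers holds for \emph{any} positive-dimensional quadric over $\R$ (a singular one is a cone with $\R$-rational vertex), so this does not affect correctness, only generality.
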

\begin{proof}
The forward direction is immediate.
As for the backward direction, assume that $\mathcal{Q}(\R)$ is disconnected and let $U, V\subset \mathcal{Q}(\R)$ be two disjoint non-empty open subsets that cover and disconnect $\mathcal{Q}(\R)$.
For every $p$ in the image of $\phi(\R)$, the fiber $\phi(\R)^{-1}(p)$ is connected, hence 
is either fully contained in \(U\) or in \(V\). 
This shows $\phi(\R)(U)$ and $\phi(\R)(V)$ are disjoint.
Moreover, $\phi(\R)$ is a closed map since \(\cal Q\) is proper, so
$\phi(\R)(U)$ and $\phi(\R)(V)$ are open in the image of $\phi(\R)$. 
We conclude that the image of $\phi(\R)$ is disconnected by $\phi(\R)(U)$ and $\phi(\R)(V)$.
\end{proof}

\begin{lem}\label{lem:real-locus-symmetric-power}
    Let \(\phi\colon \mathcal{Q} \to \bb P^1\) be a quadric fibration over \(\bb R\). If the image of the induced map \(\phi(\bb R)\colon \mathcal{Q}(\bb R)\to\bb P^1(\bb R)\) on \(\bb R\)-points is disconnected, then so is the image of the induced map \((\Sym^r \mathcal{Q})(\bb R)\to(\Sym^r \bb P^1)(\bb R) \cong \bb P^r(\bb R)\) on real points of the symmetric powers for any \(r\geq 1\). In particular, \((\Sym^r \mathcal{Q})(\bb R)\) is disconnected.
\end{lem}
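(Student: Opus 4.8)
The plan is to transfer the question to the circle \(S^1 = \P^1(\R)\) and to exploit a strong constraint on the \emph{real part} of any effective divisor on \(\P^1\) that arises from a real point of \(\Sym^r Q\). Write \(Z \coloneqq \phi(\R)(Q(\R)) \subseteq S^1\); since a quadric fibration is proper, \(Z\) is closed, and by hypothesis it is disconnected, so \(S^1 \setminus Z\) has at least two connected components, each a proper open arc whose endpoints lie in \(Z\). First I would fix two such components \(g_1, g_2\); then \(S^1 \setminus (g_1 \cup g_2) = K_1 \sqcup K_2\) is a union of two closed arcs with \(Z \subseteq K_1 \cup K_2\) and \(Z \cap K_i \neq \emptyset\) for \(i=1,2\) (the endpoints of \(K_i\) lie in \(Z\)). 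Put \(U \coloneqq S^1 \setminus K_1 = g_1 \cup K_2 \cup g_2\); this is an open arc, so \(\O_{\P^1}(1)|_U\) and hence \(\O_{\P^1}(r)|_U\) are trivial, and I would fix a trivialization of the latter that is the \(r\)-th tensor power of one for \(\O(1)|_U\).

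The key step is the following constraint. A real point of \(\Sym^r Q\) is a conjugation-invariant effective \(0\)-cycle \(\xi\) of degree \(r\) on \(Q_\C\); writing \(\xi = \sum_k n_k[v_k] + \sum_j m_j\bigl([w_j] + [\overline{w_j}]\bigr)\) with \(v_k \in Q(\R)\) and \(w_j \in Q(\C) \setminus Q(\R)\), the image divisor \(D = (\Sym^r\phi)(\xi)\) satisfies, for each \(p \in \P^1(\R)\),
\[
\operatorname{mult}_p D = \sum_{\phi(v_k) = p} n_k + 2\sum_{\phi(w_j) = p} m_j \;\equiv\; \sum_{\phi(v_k) = p} n_k \pmod 2 .
\]
Thus a real root of \(D\) of odd multiplicity forces some \(v_k\) to lie over \(p\), i.e.\ \(p \in Z\). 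Consequently, every \(D\) in the image \(W \subseteq \P^r(\R)\) of \((\Sym^r Q)(\R) \to (\Sym^r\P^1)(\R) \cong \P^r(\R)\) has only even-order real zeros on \(g_1\) and on \(g_2\), hence a well-defined sign on each of these arcs in the fixed trivialization of \(\O(r)|_U\).

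This lets me define \(\epsilon\colon W \to \{\pm 1\}\) by \(\epsilon(D) = \operatorname{sgn}(D|_{g_1})\cdot\operatorname{sgn}(D|_{g_2})\); it is independent of the scaling of \(D\) and of the trivialization, since rescaling the trivialization has constant sign on the arc \(U\). I would check \(\epsilon\) is locally constant on \(W\): near a given \(D_0 \in W\) one may choose \(c_i \in g_i\) with \(D_0(c_i)\neq 0\), and then \(D \mapsto \operatorname{sgn}(D(c_1))\operatorname{sgn}(D(c_2))\) is continuous on a neighborhood of \(D_0\) and restricts to \(\epsilon\) on \(W\). To see \(\epsilon\) takes both values, pick \(p_i^* \in Z \cap K_i\), a point \(q_i^* \in Q(\R)\) over \(p_i^*\), and a real linear form \(\ell_i\) vanishing at \(p_i^*\): the cycle \(r[q_1^*]\) maps to \(\ell_1^r \in W\) with \(\epsilon = +1\), because \(\ell_1\) is nowhere zero on \(U\); whereas \((r-1)[q_1^*] + [q_2^*]\) maps to \(\ell_1^{r-1}\ell_2 \in W\) with \(\epsilon = -1\), because \(\ell_2\) has a simple zero at \(p_2^* \in K_2\), which separates \(g_1\) from \(g_2\) within the arc \(U\), so \(\ell_2\) changes sign from \(g_1\) to \(g_2\) while \(\ell_1\) does not. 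A locally constant surjection \(W \to \{\pm 1\}\) shows \(W\) is disconnected, and then \((\Sym^r Q)(\R)\), which surjects continuously onto \(W\), is disconnected as well.

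The part I expect to be the main obstacle is conceptual rather than computational: \((\Sym^r\P^1)(\R) = \P^r(\R)\) is genuinely different from \(\Sym^r(\P^1(\R))\) — a real point of the symmetric power may be a non-real divisor — so the naive description of \(W\) as ``the image of \(Z^r/S_r\)'' is false, and one must instead control only the real part of the relevant divisors, which is exactly the content of the displayed congruence. A secondary technical point is that \(\O_{\P^1}(r)|_{S^1}\) is the non-orientable line bundle when \(r\) is odd, so the separating invariant has to be set up intrinsically on \(W\), working in a trivialization over the arc \(U\), rather than via a global sign on \(\P^1(\R)\).
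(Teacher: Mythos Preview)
Your proof is correct and takes a genuinely different route from the paper's. The paper argues geometrically: it identifies the image of \(\phi^r(\R)\) explicitly as \(\P^r(\R)\setminus\bigcup_i V_i\), where \(V_i\) is the image under \(\varpi(\R)\) of \(I_i\times\P^1(\R)^{r-1}\) (the \(I_i\) being the components of the complement of \(Z\)), and then shows this complement is disconnected by deformation-retracting each \(V_i\) onto a real hyperplane \(H_i\) and using that \(\P^r(\R)\setminus(H_1\cup H_2)\) is disconnected. Your approach is more analytic: you extract from the fibre description the parity constraint that odd-multiplicity real roots of the image divisor must lie in \(Z\), and then build a locally constant sign invariant \(\epsilon\colon W\to\{\pm1\}\) directly. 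The paper's argument buys an exact description of the image (potentially reusable), while yours is arguably more self-contained and sidesteps the deformation-retract step entirely; your explicit handling of the discrepancy between \((\Sym^r\P^1)(\R)\) and \(\Sym^r(\P^1(\R))\) via the congruence \(\operatorname{mult}_p D\equiv\sum_{\phi(v_k)=p}n_k\pmod 2\) is the same observation that underlies the paper's fibre-by-fibre analysis, just packaged differently.
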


\begin{proof}
    Let \(\phi^r\colon \Sym^r \cal Q \to \Sym^r \bb P^1 \cong \bb P^r\) be the induced morphism on the symmetric powers, and let \(\varpi\colon \bb P^1\times\dots\times\bb P^1 \to \Sym^r \bb P^1\cong\bb P^r\) be the quotient by the symmetric action.
    Let $I_1,\dots, I_n$ be the connected components of the complement of the image of $\phi(\R)$ in $\P^1(\R)$, and let \(V_i \subset \bb P^r(\bb R)\) denote the image of $I_i\times\P^1(\R)^{r-1}$ under $\varpi(\bb R)\colon \P^1(\R)\times\dots\times\P^1(\R)\rightarrow (\Sym^r\P^1)(\R)\xrightarrow{\sim}\P^r(\R)$.

    First, we claim that the image of \(\phi^r(\bb R)\) is equal to the complement $\P^r(\R)\setminus (\bigcup_{i=1}^n V_i)$. For this, let \(p\in \bb P^r(\bb R)\). If \(p\notin\bigcup_{i=1}^n V_i\), then the fiber of $\phi^r$ over $p$ is a finite product of $\R$-varieties, where each component either is the fiber $\phi^{-1}(q)$ over a real point $q\in \P^1(\R)\setminus (\bigcup_{i=1}^n I_i)$ or is the Weil restriction $R_{\C/\R}(\phi^{-1}(q))$ for some $q\in \P^1(\C)\setminus \P^1(\R)$. In both cases, these varieties have \(\bb R\)-points, so the fiber \((\phi^r)^{-1}(p)\) has an $\R$-point. 
    On the other hand, if $p \in V_i$, then the fiber \((\phi^r)^{-1}(p)\) over $p$ is a finite product of $\R$-varieties, where at least one component is of the form $\phi^{-1}(q)$ for some $q\in I_i$, and hence the fiber has no $\R$-points.
    
    It remains to show that $\P^r(\R)\setminus (\bigcup_{i=1}^n V_i)$ is disconnected. For this, choose points \(s_1\in I_1, s_2\in I_2\). For \(i=1,2\), since \(I_i \subset \bb P^1(\bb R)\) is an open interval, it deformation retracts to \(s_i\). Let \(H_i \subset \bb P^r(\bb R)\) be the image of \(s_i\times \P^1(\R)^{r-1}\) under \(\varpi(\bb R)\). Each \(H_i\) is the real locus of a hyperplane, and \(V_i\) deformation retracts to \(H_i\). Since the complement \(\bb P^r(\bb R)\setminus(H_1 \cup H_2)\) is disconnected, we see that \(\bb P^r(\bb R)\setminus(V_1 \cup V_2)\) is as well. This implies that the image of \(\phi^r(\bb R)\) is disconnected, since the image of \(\phi^r(\bb R)\) is contained in this latter set and intersects each of the two connected components. (For this latter claim, we may reduce to the \(r=2\) case. Then the image of the diagonal \(\Delta\colon \mathbb P^1\to\Sym^2\mathbb P^1\cong\mathbb P^2\) is a smooth conic with real points, and for any \(p\in\bb P^1(\bb R)\), the image of \(p\times \bb P^1(\bb R)\) under the quotient \(\varpi(\bb R)\) is the line \(\ell_p\) tangent to \(\Delta(\mathbb P^1)\) at the point \(\Delta(p)\).
    Then, for each \(i=0,1\), we have \(V_i = \bigcup_{q\in I_i} \ell_q\), and for any \(p\in\bb P^1(\bb R)\setminus(I_1\cup I_2)\), the line \(\ell_p\) properly intersects both \(V_1\) and \(V_2\).)
    \end{proof}

\begin{proof}[Proof of Theorem~\ref{thm:fano-R-rationality-connected-real-locus-precise}]
    We may assume throughout the proof that \(F_{g-2}(X)(\bb R)\neq\emptyset\). First, since \(\cal Q^{(g-2)}\) is a surface, a result of Comessatti \cite[pages 54--55]{Comessatti1913} shows that \eqref{item:real-2nd-maximal-Q-rationality}\(\Leftrightarrow\)\eqref{item:real-2nd-maximal-Q-connected}.
    
    We next show the implications \eqref{item:real-2nd-maximal-rationality}\(\Rightarrow\)\eqref{item:real-2nd-maximal-connected}\(\Rightarrow\)\eqref{item:real-2nd-maximal-Q-rationality}\(\Rightarrow\)\eqref{item:real-2nd-maximal-rationality}.
    First, \eqref{item:real-2nd-maximal-rationality}\(\Rightarrow\)\eqref{item:real-2nd-maximal-connected} is \cite[Theorem 13.3]{DK81}.
    For \eqref{item:real-2nd-maximal-connected}\(\Rightarrow\)\eqref{item:real-2nd-maximal-Q-rationality}, assume \(\cal Q^{(g-2)}\) is irrational over \(\bb R\). Then \cite{Comessatti1913} implies \(\cal Q^{(g-2)}(\bb R)\) is disconnected, so Lemmas~\ref{lem:Q-reallocus-connected} and~\ref{lem:real-locus-symmetric-power} applied to \(\phi^{(g-2)}\colon \cal Q^{(g-2)}\rightarrow \bb P^1\) imply that \((\Sym^{g-1}\cal Q^{(g-2)})(\bb R)\) is disconnected. The Hilbert--Chow morphism \(\rho\colon \Hilb^{g-1}\cal Q^{(g-2)} \to \Sym^{g-1}\cal Q^{(g-2)}\) is a resolution of singularities \cite{Fogarty68}, and the image of \(\rho(\bb R)\) intersects every connected component of \((\Sym^{g-1}\cal Q^{(g-2)})(\bb R)\), so the real locus \((\Hilb^{g-1}\cal Q^{(g-2)})(\bb R)\) is disconnected. The number of real connected components is a birational invariant of smooth projective real varieties \cite[Theorem 13.3]{DK81},
    so by Theorem~\ref{thm:symmetric} this implies \(F_{g-2}(X)(\bb R)\) is disconnected.
    Finally, \eqref{item:real-2nd-maximal-Q-rationality}\(\Rightarrow\)\eqref{item:real-2nd-maximal-rationality} by Theorem~\ref{thm:symmetric} and \cite{Mattuck69}.
\end{proof}

\subsection{Isotopy classification for real complete intersections of quadrics}\label{sec:krasnov}

In this section, we recall an invariant that gives an isotopy classification of real complete intersections of two quadrics in \(\bb P^N\).
This invariant dates back to work of Mordell \cite[\S 3]{Mordell59} and Swinnerton-Dyer \cite[page 268]{SD64} (see also \cite[Section 10]{CTSSD2}).
Using this invariant, Krasnov gave a topological classification of smooth complete intersections of two quadrics when \(N=5, 6\)
\cite{krasnov-biquadrics} (see also \cite[Section 11.2]{HT-intersection-quadrics} and \cite[Section 4.1]{HassettKoll'arTschinkel}).

Over \(\bb R\), let \(X=\{Q_0 = Q_1 = 0\}\subset\bb P^N\) be a smooth complete intersection of two quadrics. The degeneracy locus \(\Delta\) of the pencil \(\cal Q = \{s Q_0 + t Q_1 = 0\} \to \bb P^1\) contains \(r\) real points for some integer \(0 \leq r \leq N+1\). Consider the \(\bb Z_{\geq 0}^2\)-valued function defined by the signatures of the real quadratic forms \[\{s Q_0 + t Q_1 \mid (s,t)\in\bb R^2 \text{ such that }s^2 + t^2 = 1\}\] as \((s,t)\) varies counterclockwise over the unit circle \(\bb S^1 \subset \bb R^2\). This function has \(2r\) points of discontinuity, given by the preimage of \(\Delta(\bb R)\) under the quotient \(\bb S^1 \to \bb P^1(\bb R)\). At each of these points, the number of positive eigenvalues either increases (denoted by \(+\)) or decreases (denoted by \(-\)) by exactly 1.
Since antipodal points have opposite signs, the number of maximal sequences of consecutive \(+\)'s is odd. Thus, we obtain an odd partition
\[r = r_1 + \dots + r_{2u+1}\]
for some non-negative integer $u$,
where each \(r_i\) is the length of a maximal sequence of consecutive \(+\)'s. Following \cite{HassettKoll'arTschinkel}, we call the sequence \((r_1,\ldots,r_{2u+1})\) the \defi{Krasnov invariant} of \(X\). It is well defined up to cyclic permutations and reversal of the order. This invariant determines the rigid isotopy class of \(X\):

\begin{thm}[{\cite[Theorem 1.1]{Krasnov-4-diml}, see also \cite[Appendix A.4.2]{DegtyarevItenbergKharlamov}}]\label{thm:krasnov-isotopy}
    For \(N \geq 3\), isotopy classes of smooth complete intersections of two real quadrics in \(\bb P^N\) correspond to equivalence classes of odd decompositions \(r_1 + \cdots + r_{2u+1} = r\) where \(0\leq r\leq N+1\) is an integer with parity equal to \(N+1\).
\end{thm}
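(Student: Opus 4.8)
The plan is to prove the asserted bijection in three steps: (I) the Krasnov invariant is a well-defined rigid isotopy invariant, taking values among the equivalence classes of admissible odd decompositions; (II) two smooth complete intersections of two quadrics in $\P^N$ with the same Krasnov invariant are isotopic; and (III) every admissible odd decomposition is realized by some $X$. Steps (I) and (II) together show that the Krasnov invariant induces an injection on isotopy classes, and (III) gives surjectivity.

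For step (I), I would begin from the fact (used already in the proof of Lemma~\ref{lem:hyperbolic-reduction}, and going back to \cite[Proposition 2.1]{Reid-thesis}) that $X = \{Q_0 = Q_1 = 0\}$ is smooth if and only if the binary form $\det(s Q_0 + t Q_1)$ of degree $N+1$ has $N+1$ distinct roots in $\P^1(\C)$; let $r$ be the number of real roots among them. Non-real roots occur in complex-conjugate pairs, so $r \equiv N+1 \pmod 2$. Along a path of smooth complete intersections, these $N+1$ roots move continuously in $\P^1(\C)$ and never collide --- a collision would force a singular member --- so $r$ stays constant, and the cyclic positions of the real roots on $\P^1(\R)$, together with the sign of the jump of the signature function at each of their $2r$ preimages on $\bb S^1$, vary continuously. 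Because this data is discrete, the associated cyclic word of $\pm$'s --- hence the Krasnov invariant --- is locally constant, therefore constant along the path. That the decomposition has an odd number of parts and that the total number of $+$'s equals $r$ both follow from the antipodal symmetry $s Q_0 + t Q_1 \mapsto -(s Q_0 + t Q_1)$, which exchanges the numbers of positive and negative eigenvalues and so forces the $+$-pattern on the second half of $\bb S^1$ to be the negation of the pattern on the first half.

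For steps (II) and (III), the essential input is a real normal form for the pencil. Since $\det(s Q_0 + t Q_1)$ has simple roots, the real analogue of the Weierstrass--Kronecker classification of pairs of quadratic forms applies: after a coordinate change in $\mathrm{GL}_{N+1}(\R)$ on $\P^N$ and one in $\mathrm{GL}_2(\R)$ on the pencil parameter, $(Q_0, Q_1)$ becomes an orthogonal direct sum of a rank-one block with eigenvalue $\lambda_i \in \R$ and sign $\epsilon_i \in \{\pm 1\}$ for each real root, together with a rank-two ``complex'' block for each conjugate pair of roots. One then checks that the cyclic sequence of signs $\epsilon_i$, read in the cyclic order of the $\lambda_i$ along $\P^1(\R)$ and grouped into maximal runs of $+$'s, reproduces the Krasnov invariant from step (I). Granting this, fixing the Krasnov invariant, I would connect any two such pencils by an explicit isotopy: slide the real eigenvalues $\lambda_i$ along $\P^1(\R)$ keeping their cyclic order fixed (so no two collide and no member degenerates), move each conjugate pair of complex eigenvalues within the connected locus $\P^1(\C) \setminus \P^1(\R)$ up to conjugation, and dispose of the change-of-basis matrices --- here one uses that a negative-determinant element of $\mathrm{GL}_2(\R)$ reverses the cyclic order on $\P^1(\R)$, which is precisely the reversal identification in the statement, while the two components of $\mathrm{GL}_{N+1}(\R)$ act trivially on signatures and introduce no further identifications. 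For step (III), one simply writes down the normal form with any prescribed real eigenvalues and signs realizing a given admissible odd decomposition, padding with complex blocks when $r < N+1$.

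The main obstacle is step (II). It has two substantive components: first, setting up the real simultaneous normal form with the correct sign bookkeeping and --- crucially --- doing so continuously in families, so that it produces an actual isotopy rather than a merely pointwise classification; and second, proving that the locus of smooth complete intersections with a fixed Krasnov invariant is connected. The latter requires carefully controlling the $\mathrm{PGL}_2(\R)$-action on the base $\P^1$, the unordered nature of the eigenvalues, and the component group of $\mathrm{GL}_{N+1}(\R)$, in order to be certain that cyclic permutation and reversal of the decomposition are the \emph{only} identifications. Once step (II) is in hand, the distinctness of the classes and the precise combinatorial shape of the invariant follow from the continuity-and-discreteness argument of step (I).
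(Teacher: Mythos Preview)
The paper does not give a proof of this theorem: it is stated with attribution to \cite{Krasnov-4-diml} and \cite{DegtyarevItenbergKharlamov}, and the paper proceeds immediately to consequences. So there is no ``paper's own proof'' to compare against.

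Your outline is the standard one and matches in spirit what is done in the cited references: a real normal form for regular pencils of quadrics, followed by a connectivity argument in the parameter space with fixed combinatorial data, plus a realizability check. The structure is correct, and the observations you make (parity of $r$ via complex conjugation, antipodal symmetry forcing an odd number of runs, the role of the two components of $\mathrm{PGL}_2(\R)$ in producing the reversal identification) are all on point.

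That said, what you have written is a sketch, not a proof, and you acknowledge this yourself. Step~(II) is where the work lies, and two pieces are genuinely missing. First, the real Weierstrass--Kronecker normal form for a pair of real symmetric forms with simple discriminant is a theorem in its own right; you invoke it but do not prove or cite it. Second, the passage from a pointwise normal form to path-connectedness of the stratum with fixed invariant needs an actual argument: you must show that sliding eigenvalues, moving complex pairs, and absorbing the change-of-basis matrices can all be done \emph{through} smooth pencils, and that the component group of $\mathrm{GL}_{N+1}(\R)$ does not create extra components (it does not, because a reflection in one coordinate can be compensated by flipping one $\epsilon_i$ and then sliding, but this interaction has to be checked). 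Until those are filled in, the proposal is a correct plan rather than a proof.
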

In particular, for each \((r_1,\ldots,r_{2u+1})\) as above, there exist smooth complete intersections of quadrics \(X\subset\bb P^N\) with this given Krasnov invariant.

\begin{defn}\label{defn:krasnov-height-frequency}
    In the above setting, 
    for the Krasnov invariant $(r_1,\cdots, r_{2u+1})$, 
    \cite[Section 2]{krasnov-biquadrics} defines \(I_{\min}\) to be the minimum number of negative eigenvalues that occurs for the quadratic form \(s Q_0 + t Q_1\) as \((s,t)\) varies on the unit circle \(\bb S^1\).
    We define the \defi{height} $h \coloneqq N+1- 2I_{\min}$, and
    we define the \defi{frequency} $f$ to be the number of distinct intervals, i.e., components of \(\bb S^1\) after removing the preimage of \(\Delta(\bb R)\), where \(I_{\min}\) is achieved.
    Note that \(0\leq I_{\min} \leq \lfloor\frac{N+1}{2}\rfloor\), so we always have \(0 \leq h\leq N+1\). If \((s,t)\in\mathbb S^1\) is such that \(s Q_0 + t Q_1\) has \(I_{\min}\) negative eigenvalues, then the signature of \(s Q_0 + t Q_1\) is \((h+I_{\min}, I_{\min})\).
\end{defn}

One can check that the Krasnov invariant uniquely (up to cyclic permutations and order reversal) determines the following sign sequence, where \(+/-\) refers to if the number of positive eigenvalues of \(s Q_0 + t Q_1\) increases/decreases:
\begin{gather*}
    \overbrace{+ \dots +}^{r_i}, \overbrace{- \dots -}^{r_{i+u+1}}, \overbrace{+ \dots +}^{r_{i+1}}, \overbrace{- \dots -}^{r_{i+u+2}}, \overbrace{+ \dots +}^{r_{i+2}}, \dots , \overbrace{- \dots -}^{r_{i+2u}}, \overbrace{+ \dots +}^{r_{i+u}}, \\
    \underbrace{- \dots -}_{r_i}, \underbrace{+ \dots +}_{r_{i+u+1}}, \underbrace{- \dots -}_{r_{i+1}}, \underbrace{+ \dots +}_{r_{i+u+2}}, \underbrace{- \dots -}_{r_{i+2}}, \dots , \underbrace{+ \dots +}_{r_{i+2u}}, \underbrace{- \dots -}_{r_{i+u}}.
\end{gather*}
Here the subscripts are modulo \(2u+1\), so \(h\) and \(f\) in Definition~\ref{defn:krasnov-height-frequency} are well defined.

Notice that \(h > N-1\) if and only if the Krasnov invariant is \((N+1)\).

For an explicit example of the values in Definition~\ref{defn:krasnov-height-frequency}, if \(N=6\), the Krasnov invariant \((2,2,1)\) corresponds (up to cyclic permutations and order reversal) to the sequence of signatures
\[(2,5)\quad (3,4) \quad (4,3) \quad (3,4) \quad (4,3) \quad (5,2) \quad (4,3) \quad (3,4) \quad (4,3) \quad (3,4) .\]
We have \(I_{\min}=2\), \(h=3\), \(f=1\), and the corresponding sign sequence is
\(++,-,++,--,+,--\).

\subsection{Consequences of the isotopy class for linear subspaces}\label{sec:isotopy-consequences}

Hassett--Koll\'ar--Tschinkel 
and Krasnov 
observed that Krasnov invariant determines when \(X\), \(F_1(X)\), and \(F_{g-1}(X)\) have \(\bb R\)-points \cite[Proposition 5.1]{HassettKoll'arTschinkel}, \cite[Theorems 3.1 and 3.6]{krasnov-biquadrics}. We extend their analysis to all linear subspaces on \(X\):

\begin{lem}\label{lem:width-frequency}
Over $\R$, let $X$ be a smooth complete intersection of two quadrics in $\P^N$.
\begin{enumerate}
\item\label{item:fano-width} $F_r(X)(\R)\neq \emptyset$ if and only if $h\leq N-2r-1$.
\item\label{item:reallocus-width-frequency} Assume $h\leq N-2r-1$ and \(N - 2r - 2 \neq 0\). Then $\mathcal{Q}^{(r)}(\R)$ is non-empty and connected if and only if either $h\leq N-2r-3$ or $f=1$.
\end{enumerate}
\end{lem}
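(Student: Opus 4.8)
The plan is to read both statements off the signature of the pencil on the circle $\bb S^1$, via the description of $\cal Q^{(r)}$ as a hyperbolic reduction. For $v \in \bb S^1$ write $(p_v,q_v)$ for the signature of $sQ_0+tQ_1$; the antipodal symmetry $(p_{-v},q_{-v})=(q_v,p_v)$ gives $I_{\min}=\min_v q_v=\min_v p_v=\min_v\min(p_v,q_v)$, so $I_{\min}$ is precisely the minimum Witt index among the fibres of $\phi\colon\cal Q\to\P^1$, and $h\le N-2r-1$ is equivalent to $I_{\min}\ge r+1$. Since forming $\cal Q^{(r)}$ as the hyperbolic reduction with respect to a totally isotropic $(r+1)$-dimensional subspace removes a hyperbolic summand of rank $2(r+1)$ from each fibre, the fibre of $\phi^{(r)}$ over $v$ has signature $(p_v-r-1,q_v-r-1)$. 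I will use these two facts throughout.

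For part~\ref{item:fano-width}, the implication $F_r(X)(\R)\ne\emptyset\Rightarrow h\le N-2r-1$ is immediate: an $r$-plane over $\R$ on $X$ is a totally isotropic $(r+1)$-dimensional subspace for \emph{every} form in the pencil, so $r+1\le\min(p_v,q_v)$ for all $v$, i.e.\ $I_{\min}\ge r+1$. For the converse I would induct on $r$, with base case $r=0$ being the criterion of Hassett--Koll\'ar--Tschinkel and Krasnov (\cite[Proposition 5.1]{HassettKoll'arTschinkel}). For the inductive step, $h\le N-2r-1$ implies $h\le N-2(r-1)-1$, so $F_{r-1}(X)(\R)\ne\emptyset$; choosing $\ell\in F_{r-1}(X)(\R)$ and forming $\phi^{(r-1)}\colon\cal Q^{(r-1)}\to\P^1$, the hypothesis $I_{\min}\ge r+1$ says every fibre of $\phi^{(r-1)}$ has positive Witt index, i.e.\ the generic fibre is a quadratic form over $\R(\P^1)$ of dimension $\ge 3$ that is indefinite at every ordering; such a form is isotropic (the function field of a real curve has Hasse number $2$). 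Hence the generic fibre has a rational point, so $\cal Q^{(r-1)}\to\P^1$ has a rational, and thus an actual, section, and Proposition~\ref{HT} yields $F_r(X)(\R)\ne\emptyset$.

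For part~\ref{item:reallocus-width-frequency}, assume $h\le N-2r-1$, so $\cal Q^{(r)}\to\P^1$ is a quadric fibration of positive relative dimension (the remaining low-dimensional cases are handled directly). By Lemma~\ref{lem:Q-reallocus-connected}, $\cal Q^{(r)}(\R)$ is nonempty and connected iff the image of $\phi^{(r)}(\R)\colon\cal Q^{(r)}(\R)\to\P^1(\R)$ is. A smooth fibre over a real point has a real point exactly when it is indefinite, i.e.\ when $\min(p_v,q_v)>r+1$, whereas the fibres over $\Delta(\R)$ are corank-one cones and always have a real point; hence the image of $\phi^{(r)}(\R)$ is $\P^1(\R)$ with the open ``definite'' chambers removed. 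If $h\le N-2r-3$, i.e.\ $I_{\min}\ge r+2$, there are no definite chambers and the image is all of $\P^1(\R)$, which is connected. If $h=N-2r-1$, i.e.\ $I_{\min}=r+1$, a chamber is definite precisely when $\min(p_v,q_v)=I_{\min}$; the $f$ chambers of $\bb S^1$ on which the number of negative eigenvalues equals $I_{\min}$ and the $f$ antipodal chambers on which the number of positive eigenvalues equals $I_{\min}$ are disjoint (as $h\ge 2$) and map in antipodal pairs onto the same $f$ open arcs of $\P^1(\R)$. Removing $f$ disjoint open arcs from $\P^1(\R)\cong S^1$ leaves $f$ closed arcs, which is connected iff $f=1$; and nonemptiness holds because $h\ge 2$ prevents $\min(p_v,q_v)$ from being constant, so some fibre is indefinite.

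The main obstacle is the converse in part~\ref{item:fano-width}: passing from ``every fibre of $\phi^{(r-1)}$ has a point'' to ``$\phi^{(r-1)}$ has a section'' is exactly where the theory of quadratic forms over the real function field $\R(\P^1)$ must be invoked (a totally indefinite form of dimension $\ge 3$ is isotropic), and this is the one nonformal input; an alternative would be to extract a section directly from Krasnov's normal form for the pencil. The second delicate point, in part~\ref{item:reallocus-width-frequency}, is the bookkeeping that makes the invariant $f$ rather than $2f$ appear: one must keep careful track of the passage between the signature function on $\bb S^1$ and the base $\P^1(\R)$ of the fibration, and must account for the corank-one fibres over $\Delta(\R)$ being in the image.
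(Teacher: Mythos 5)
Your proposal is correct and follows essentially the same route as the paper: part~\eqref{item:fano-width} by iterating Proposition~\ref{HT} together with Witt's Hasse principle for quadratic forms over $\R(\P^1)$ (the paper cites Witt's Satz 22 where you cite the Hasse number of a real function field), and part~\eqref{item:reallocus-width-frequency} by reducing connectedness of $\mathcal{Q}^{(r)}(\R)$ to connectedness of the image of $\phi^{(r)}(\R)$ via Lemma~\ref{lem:Q-reallocus-connected} and the signature bookkeeping under hyperbolic reduction. The only minor variations are that the paper settles the case $h\leq N-2r-3$ by $\R$-rationality of $\mathcal{Q}^{(r)}$ plus the Comessatti/DK invariance of connectedness, and gets nonemptiness from the index-one statement of Lemma~\ref{lem:hyperbolic-reduction}\eqref{item:index}, whereas you argue both directly from the chamber analysis, which the paper leaves implicit.
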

\begin{proof}
\eqref{item:fano-width}: 
The proof is by induction on $r$.
For $r=0$, the Amer--Brumer theorem \cite[Theorem 2.2]{Leep} shows that
$X(\R)\neq \emptyset$ if and only if 
$\phi\colon \mathcal{Q}\rightarrow \P^1$ has a section;
this is further equivalent to surjectivity of the induced map \(\phi(\bb R)\) on real points by
a result of Witt \cite[Satz 22]{witt37}.
The latter happens exactly when neither $(N+1,0)$ nor $(0,N+1)$ appears as the signature of a real fiber of $\phi$, which is equivalent to $h\leq N-1$.
For $r>0$, Proposition~\ref{HT} and \cite[Satz 22]{witt37} show that $F_r(X)(\R)\neq\emptyset$ if and only if $F_{r-1}(X)(\R)\neq\emptyset$ and $\phi^{(r-1)}(\bb R)\colon \mathcal{Q}^{(r-1)}(\bb R)\rightarrow \P^1(\bb R)$ is surjective. Surjectivity of \(\phi^{(r-1)}(\bb R)\) happens exactly when $(N-2r+1,0), (0, N-2r+1)$ never appear as the signatures of real fibers of $\phi^{(r-1)}$, which is equivalent to $h\leq N-2r-1$.

\eqref{item:reallocus-width-frequency}: If $h\leq N-2r-1$, then~\eqref{item:fano-width} shows $F_r(X)(\R)\neq\emptyset$.
Hence $\phi^{(r)}\colon\mathcal{Q}^{(r)}\rightarrow \P^1$ is defined and the assumptions imply that \(N - 2r - 2 > 0\), so $\mathcal{Q}^{(r)}(\R)\neq\emptyset$ by Lemma~\ref{lem:hyperbolic-reduction}\eqref{item:index}.
If $h\leq N-2r-3$, then $F_{r+1}(X)(\R)\neq\emptyset$ by \eqref{item:fano-width}, so by Proposition~\ref{HT}, $\mathcal{Q}^{(r)}(\R)$ is $\R$-rational and in particular has non-empty and connected real locus by \cite[Theorem 13.3]{DK81}.  
Finally, if $h=N-2r-1$, then $(N-2r-1,0)$ appears as the signature of a real fiber of $\phi^{(r)}$, and the induced map $\phi^{(r)}(\R)$ on real points is not surjective.
By Lemma~\ref{lem:Q-reallocus-connected}, $\mathcal{Q}^{(r)}(\R)$ is connected if and only if the image of $\phi^{(r)}(\R)$ is connected, and the latter is equivalent to $f=1$.
\end{proof}

Lemma~\ref{lem:width-frequency} and (the proof of) Theorem~\ref{thm:fano-unirational} show that the Krasnov invariant determines \(\bb R\)-unirationality for Fano schemes of non-maximal linear subspaces and the corresponding hyperbolic reductions. That is, for \(X\subset \bb P^N\) and \(0\leq r\leq \lfloor\frac{N}{2}\rfloor -2\), the \(\bb R\)-unirationality of $F_r(X)$ (resp. \(\cal Q^{(r)}\)) is determined by the isotopy class of \(X\).

Furthermore, in the case when \(N=2g\) is even and \(r=g-2\), Theorem~\ref{thm:fano-R-rationality-connected-real-locus-precise} and Lemma~\ref{lem:width-frequency} imply that the isotopy class further determines the following \(\bb R\)-(uni)rationality properties. In particular, we can find the isotopy classes of even-dimensional \(X\) violating the conclusion of Theorem~\ref{maximallinearspace}.
\begin{cor}\label{cor:fano-R-rationality-connected-real-locus-isotopy}
    Over \(\bb R\), fix \(g \geq 2\), and let \(X\) be a smooth complete intersection of two quadrics in \(\bb P^{2g}\).
    \begin{enumerate}
        \item\label{item:Q-g-2-even-isotopy} Then \(\bb R\)-rationality of \(F_{g-2}(X)\) (resp. $\mathcal{Q}^{(g-2)}$) is determined by the isotopy class of \(X\).
        \item\label{item:even-maximallinearspace-fails} \(F_{g-1}(X)(\bb R)=\emptyset\), \(F_{g-2}(X)(\R)\neq\emptyset\), and \(\cal Q^{(g-2)}\) is \(\bb R\)-rational if and only if \(h=3\) and \(f=1\).
        \item\label{item:even-F_g-2-uni-but-irrational} \(F_{g-2}(X)\) is \(\bb R\)-unirational but not \(\bb R\)-rational if and only if \(h=3\) and \(f>1\).
    \end{enumerate}
\end{cor}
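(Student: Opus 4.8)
The strategy is to translate each geometric condition in the statement into a numerical condition on Krasnov's invariants $h$ and $f$ from Definition~\ref{defn:krasnov-height-frequency}, using Theorem~\ref{thm:fano-R-rationality-connected-real-locus-precise}, Lemma~\ref{lem:width-frequency}, and Theorem~\ref{thm:fano-unirational}, and then to invoke Theorem~\ref{thm:krasnov-isotopy} to conclude that $h$ and $f$ --- hence all of these conditions --- depend only on the isotopy class of $X$. Throughout one takes $N=2g$ and $r=g-2$, and uses repeatedly that $h=N+1-2I_{\min}=2g+1-2I_{\min}$ is \emph{odd}; in particular $1<h\le 3$ forces $h=3$. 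One also uses Lemma~\ref{lem:hyperbolic-reduction}\eqref{item:hyperbolic-reduction-independence}, so that once $F_{g-2}(X)(\R)\neq\emptyset$ (so that $\cal Q^{(g-2)}$ is defined) its $\R$-birational class, hence the assertion ``$\cal Q^{(g-2)}$ is $\R$-rational'', does not depend on the choice of $\ell$.

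For~\eqref{item:Q-g-2-even-isotopy}: Lemma~\ref{lem:width-frequency}\eqref{item:fano-width} with $r=g-2$ gives that $F_{g-2}(X)(\R)\neq\emptyset$ if and only if $h\le 3$; this already shows that whether $\cal Q^{(g-2)}$ is defined is an isotopy invariant. Assuming $h\le 3$, Theorem~\ref{thm:fano-R-rationality-connected-real-locus-precise} gives that $F_{g-2}(X)$ is $\R$-rational $\Longleftrightarrow$ $\cal Q^{(g-2)}$ is $\R$-rational $\Longleftrightarrow$ $\cal Q^{(g-2)}(\R)$ is non-empty and connected, and Lemma~\ref{lem:width-frequency}\eqref{item:reallocus-width-frequency} with $r=g-2$ turns this last condition into ``$h\le 1$ or $f=1$''. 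Since $h$ and $f$ are computed from the Krasnov invariant, which is a complete isotopy invariant by Theorem~\ref{thm:krasnov-isotopy}, both $\R$-rationality of $F_{g-2}(X)$ and the condition ``$\cal Q^{(g-2)}$ is defined and $\R$-rational'' are determined by the isotopy class of $X$.

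For~\eqref{item:even-maximallinearspace-fails}: Lemma~\ref{lem:width-frequency}\eqref{item:fano-width} with $r=g-1$ (where $N-2r-1=1$) gives $F_{g-1}(X)(\R)=\emptyset$ if and only if $h>1$, and with $r=g-2$ it gives $F_{g-2}(X)(\R)\neq\emptyset$ if and only if $h\le 3$; by the parity of $h$, the two together are equivalent to $h=3$. Given $h=3$, so in particular $F_{g-2}(X)(\R)\neq\emptyset$, Theorem~\ref{thm:fano-R-rationality-connected-real-locus-precise} together with Lemma~\ref{lem:width-frequency}\eqref{item:reallocus-width-frequency} shows that $\cal Q^{(g-2)}$ is $\R$-rational if and only if $h\le 1$ or $f=1$, i.e. (since $h=3$) if and only if $f=1$. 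Hence the three conditions in~\eqref{item:even-maximallinearspace-fails} hold simultaneously precisely when $h=3$ and $f=1$.

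For~\eqref{item:even-F_g-2-uni-but-irrational}: by the $k=\R$ part of Theorem~\ref{thm:fano-unirational} and Lemma~\ref{lem:width-frequency}\eqref{item:fano-width}, $F_{g-2}(X)$ is $\R$-unirational if and only if $F_{g-2}(X)(\R)\neq\emptyset$, i.e. if and only if $h\le 3$ (for $g=2$ this is the classical statement for degree~$4$ del Pezzo surfaces over $\R$). By~\eqref{item:Q-g-2-even-isotopy}, $F_{g-2}(X)$ is $\R$-rational if and only if $h\le 1$, or $h=3$ and $f=1$. Comparing, $F_{g-2}(X)$ is $\R$-unirational but not $\R$-rational exactly when $h\le 3$ but neither $h\le 1$ nor ($h=3$ and $f=1$) holds, which by the parity of $h$ is exactly $h=3$ and $f>1$. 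The argument throughout is essentially bookkeeping once the cited results are in place; the only points that need care are keeping track of the parity of $h$ (so that $1<h\le 3$ collapses to $h=3$) and being precise about the sense in which $\cal Q^{(g-2)}$, which only exists after a choice of $\ell\in F_{g-2}(X)(\R)$, is (ir)rational.
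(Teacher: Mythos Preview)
Your proof is correct and follows exactly the approach the paper intends: the corollary is stated there without its own proof, as an immediate consequence of Theorem~\ref{thm:fano-R-rationality-connected-real-locus-precise}, Lemma~\ref{lem:width-frequency}, and Theorem~\ref{thm:fano-unirational}, and you have simply written out the bookkeeping explicitly. Your observation that $h=N+1-2I_{\min}=2g+1-2I_{\min}$ is odd, so that $1<h\le 3$ collapses to $h=3$, is exactly the point that makes the translation clean, and your care about the $g=2$ case (where $N=4<6$ and one falls back on the classical unirationality of degree~$4$ del Pezzo surfaces with a real point) is appropriate.
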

In particular, for any \(g \geq 2\), there exist smooth complete intersections of quadrics \(X\subset\bb P^{2g}\) with Krasnov invariant \((3)\) by Theorem~\ref{thm:krasnov-isotopy}, so examples satisfying Corollary~\ref{cor:fano-R-rationality-connected-real-locus-isotopy}\eqref{item:even-maximallinearspace-fails} exist in any even dimension.

\begin{rem}
    In general, for \(N \geq 7\), it is not known whether the isotopy class of \(X\) determines \(\bb R\)-rationality of \(F_r(X)\). See \cite[Section 6.2]{HassettKoll'arTschinkel} for an isotopy class in the case \(N=8\) where the \(\bb R\)-rationality of its members is unknown.
\end{rem}

As a sample application of Lemma~\ref{lem:width-frequency}, for the case \(N=6\) we show the following properties for the Fano schemes \(F_r(X)\), extending the analysis in \cite{HassettKoll'arTschinkel}. One could similarly carry out an analysis for any \(N\).

\begin{cor}\label{cor:classification-4-fold}
Over $\R$, let $X$ be a smooth complete intersection of two quadrics in $\P^6$.
\begin{enumerate}
\item\label{item:classification-cor-F_2} $F_2(X)(\R)$ is non-empty if and only if the Krasnov invariant is one of $(1)$, $(1,1,1)$, $(1,1,1,1,1)$, or $(1,1,1,1,1,1,1)$.
\item\label{item:classification-cor-F_1-Q} $F_1(X)(\R)$ is non-empty and 
$\mathcal{Q}^{(1)}(\R)$ is non-empty and connected
if and only if the Krasnov invariant is one of those listed in item~\ref{item:classification-cor-F_2}, or is $(3)$, $(2,2,1)$, or $(2,1,2,1,1)$.
\item\label{item:classification-cor-F_1}
$F_1(X)(\R)$ is non-empty 
if and only if
Krasnov invariant is one of those listed in item~\ref{item:classification-cor-F_1-Q}, or is
$(3,1,1)$, $(3,2,2)$, $(3,1,1,1,1)$, or $(2,2,1,1,1)$.
\item\label{item:classification-cor-X} $X(\R)$ is non-empty and 
$\mathcal{Q}^{(0)}(\R)$ is non-empty and connected
if and only if the Krasnov invariant is one of those listed in item~\ref{item:classification-cor-F_1}, or is $(5)$, $(4,2,1)$, or $(3,3,1)$.
\item\label{item:classification-r-point} 
$X(\R)$ is non-empty if and only if 
the Krasnov invariant is one of those listed in item~\ref{item:classification-cor-X} or is $(5,1,1)$.
\end{enumerate}
\end{cor}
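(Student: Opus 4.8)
The plan is to reduce the entire statement to a finite computation by combining Krasnov's isotopy classification (Theorem~\ref{thm:krasnov-isotopy}) with Lemma~\ref{lem:width-frequency}. First I would list the isotopy classes of smooth complete intersections of two real quadrics in $\P^6$: by Theorem~\ref{thm:krasnov-isotopy} these are the odd decompositions $r = r_1 + \cdots + r_{2u+1}$ with $0 \leq r \leq 7$ and $r$ odd (the parity of $N+1 = 7$), taken up to cyclic permutation and order reversal. There are sixteen of them, namely $(1)$; $(3)$, $(1,1,1)$; $(5)$, $(3,1,1)$, $(2,2,1)$, $(1,1,1,1,1)$; and $(7)$, $(5,1,1)$, $(4,2,1)$, $(3,3,1)$, $(3,2,2)$, $(3,1,1,1,1)$, $(2,2,1,1,1)$, $(2,1,2,1,1)$, $(1,1,1,1,1,1,1)$.

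Next I would compute, for each class, the height $h$ and frequency $f$ of Definition~\ref{defn:krasnov-height-frequency}. The key observation is that the function recording the number of positive eigenvalues of $s Q_0 + t Q_1$ along $\bb S^1$ is a $\pm 1$ walk prescribed by the sign sequence of Section~\ref{sec:krasnov}; by the antipodal symmetry $s Q_0 + t Q_1 \mapsto -(s Q_0 + t Q_1)$ its range is a symmetric interval $[I_{\min},\, N+1-I_{\min}]$, so $h = N+1-2 I_{\min}$ is exactly the amplitude of this walk and $f$ is the number of intervals on which its maximum is attained. Carrying this out class by class yields: $h = 1$ precisely for $(1)$, $(1,1,1)$, $(1,1,1,1,1)$, $(1,1,1,1,1,1,1)$; $h = 3$ and $f = 1$ precisely for $(3)$, $(2,2,1)$, $(2,1,2,1,1)$; $h = 3$ and $f > 1$ precisely for $(3,1,1)$, $(3,2,2)$, $(3,1,1,1,1)$, $(2,2,1,1,1)$; $h = 5$ and $f = 1$ precisely for $(5)$, $(4,2,1)$, $(3,3,1)$; $h = 5$ and $f > 1$ only for $(5,1,1)$; and $h = 7$ only for $(7)$.

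Finally I would feed these values into Lemma~\ref{lem:width-frequency} with $N = 6$. Part~\eqref{item:fano-width} gives $F_r(X)(\R) \neq \emptyset \iff h \leq 5 - 2r$, which proves~\eqref{item:classification-cor-F_2} (case $r = 2$, i.e.\ $h = 1$),~\eqref{item:classification-cor-F_1} (case $r = 1$, i.e.\ $h \leq 3$), and~\eqref{item:classification-r-point} (case $r = 0$, i.e.\ $h \leq 5$). Part~\eqref{item:reallocus-width-frequency} gives that, whenever $F_r(X)(\R) \neq \emptyset$, the locus $\mathcal{Q}^{(r)}(\R)$ is non-empty and connected iff $h \leq 3 - 2r$ or $f = 1$; combined with Part~\eqref{item:fano-width} this proves~\eqref{item:classification-cor-F_1-Q} (case $r = 1$: $h = 1$, or $h = 3$ and $f = 1$) and~\eqref{item:classification-cor-X} (case $r = 0$: $h \leq 3$, or $h = 5$ and $f = 1$). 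Reading off the classes from the previous paragraph reproduces exactly the lists in the statement.

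I expect the second step to be the main obstacle: one must enumerate the sixteen classes without repetition under the dihedral (cyclic plus reversal) identification and then correctly extract $h$ and $f$ from the somewhat intricate doubled sign sequence of Section~\ref{sec:krasnov}. The first and third steps are routine bookkeeping around Theorem~\ref{thm:krasnov-isotopy} and direct applications of Lemma~\ref{lem:width-frequency}.
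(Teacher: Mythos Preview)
Your proposal is correct and follows essentially the same approach as the paper: enumerate the finitely many Krasnov invariants for $N=6$ (the paper cites \cite[Section 4.1, Figure 1]{HassettKoll'arTschinkel} for this list rather than deriving it from Theorem~\ref{thm:krasnov-isotopy}, but the content is identical), compute $h$ and $f$ for each, and apply Lemma~\ref{lem:width-frequency}. Your explicit tabulation of the sixteen classes and their $(h,f)$ values is exactly the ``direct computation'' the paper invokes without spelling out.
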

\begin{proof}
There are only finitely many possible Krasnov invariants for a given smooth complete intersection of two real quadrics.
For the list when $N \leq 8$ is even, see the \(r \leq 7\) cases in \cite[Section 4.1, Figure 1]{HassettKoll'arTschinkel}.
Lemma~\ref{lem:width-frequency} and a direct computation then conclude the proof.
\end{proof}

\begin{proof}[Proof of Corollary~\ref{cor:classification}]
The result follows by combining 
Corollary~\ref{cor:classification-4-fold} with \cite[Remark 3.28.3]{CTSSD},
\cite[Theorem 1.1]{HassettKoll'arTschinkel}, and
Theorems~\ref{thm:fano-unirational} and~\ref{thm:fano-R-rationality-connected-real-locus-precise}.
Since $\mathcal{Q}^{(0)}$ is $\R$-birational to $X$ (Lemma~\ref{lem:hyperbolic-reduction}\eqref{item:hyperbolic-reduction-max-min}), $\mathcal{Q}^{(0)}(\R)$ is non-empty and connected if and only if $X(\R)$ is by \cite[Theorem 13.3]{DK81}.
\end{proof}

Finally, we end the paper by returning to odd-dimensional \(X\subset\bb P^{2g+1}\). For any \(g \geq 2\), Theorem~\ref{maximallinearspace} and Lemma~\ref{lem:width-frequency}\eqref{item:fano-width} and \eqref{item:reallocus-width-frequency} imply that Krasnov invariants with \(h=4\) and \(f=1\) correspond to \(X\subset\bb P^{2g+1}\) where \(\cal Q^{(g-2)}\) has non-empty and connected real locus but is irrational over \(\bb R\). In particular, applying Theorem~\ref{thm:krasnov-isotopy} to the isotopy class \((4)\), we see that Theorem~\ref{thm:fano-R-rationality-connected-real-locus} fails in every odd dimension. For concreteness, we list the possible Krasnov invariants here for \(g=2,3\):

\begin{exmp}\label{exmp:odd-Q^g-2-connected-irrational}
    For \(g=2\) and \(N=5\), the Krasnov invariants with \(h=4\) and \(f=1\) are \((4)\), \((4,1,1)\), and \((3,2,1)\). For \(g=3\) and \(N=7\), the Krasnov invariants with \(h=4\) and \(f=1\) are \((4)\), \((4,1,1)\), \((3,2,1)\), \((3,1,2,1,1)\), \((2,2,2,1,1)\), and \((3,3,2)\). For each of these, \(\cal Q^{(g-2)}(\bb R)\neq\emptyset\) is connected but \(F_{g-1}(X)(\bb R)=\emptyset\), so \(\cal Q^{(g-2)}\) is irrational over \(\bb R\).
\end{exmp}

\bibliographystyle{alpha}
\bibliography{references.bib}

\end{document}